\DeclareFontFamily{U}{mathx}{\hyphenchar\font45}
\DeclareFontShape{U}{mathx}{m}{n}{
      <5> <6> <7> <8> <9> <10>
      <10.95> <12> <14.4> <17.28> <20.74> <24.88>
      mathx10
      }{}
\DeclareSymbolFont{mathx}{U}{mathx}{m}{n}
\DeclareMathAccent{\widecheck}{0}{mathx}{"71}
\newtheorem {theorem}{Theorem}[section]
\newtheorem {condition}{Condition}[section]
\newtheorem {lemma}{Lemma}[section]
\newtheorem {assumption}{Assumption}
\newtheorem {remark}{Remark}[section]
\newtheorem {corollary}{Corollary}[section]
\newenvironment {proof}[1][Proof]{\noindent \textbf {#1.} }{\ \rule {0.5em}{0.5em}}
\begin{document}

\author{Xiaohong Chen\thanks{%
Corresponding Author. Cowles Foundation for Research in Economics, Yale
University, Box 208281, New Haven, CT 06520, USA. Tel: +1 203 432 5852; fax:
+1 203 432 6167. E-mail address: \texttt{xiaohong.chen@yale.edu}} \ and
Timothy M. Christensen\thanks{%
Department of Economics, New York University, 19 West 4th Street, New York,
NY 10012, USA. E-mail address: \texttt{timothy.christensen@nyu.edu}}}
\title{Optimal Uniform Convergence Rates and Asymptotic Normality for Series
Estimators Under Weak Dependence and Weak Conditions\thanks{%
We thank the guest coeditor, two anonymous referees, Bruce Hansen, Jianhua Huang, Stefan
Schneeberger, and conference participants of SETA2013 in Seoul and AMES2013
in Singapore for useful comments. This paper is an expanded version of
Sections 4 and 5 of our Cowles Foundation Discussion Paper No. 1923 %
\citep{ChenChristensenNPIVold}, the remaining parts of which are currently
undergoing a major revision. Support from the Cowles Foundation is
gratefully acknowledged. Any errors are the responsibility of the authors.}}
\date{First version January 2012; Revised August 2013, August 2014}
\maketitle

\begin{abstract}
\noindent We show that spline and wavelet series regression estimators for
weakly dependent regressors attain the optimal uniform (i.e. sup-norm)
convergence rate $(n/\log n)^{-p/(2p+d)}$ of \cite{Stone1982}, where $d$ is
the number of regressors and $p$ is the smoothness of the regression
function. The optimal rate is achieved even for heavy-tailed martingale
difference errors with finite $(2+(d/p))$th absolute moment for $d/p<2$. We
also establish the asymptotic normality of t statistics for possibly
nonlinear, irregular functionals of the conditional mean function under weak
conditions. The results are proved by deriving a new exponential inequality
for sums of weakly dependent random matrices, which is of independent
interest. \bigskip

\noindent \textbf{JEL Classification:} C12, C14, C32

\medskip \noindent \textbf{Keywords:} Nonparametric series regression;
Optimal uniform convergence rates; Weak dependence; Random matrices;
Splines; Wavelets; (Nonlinear) Irregular Functionals; Sieve t statistics
\end{abstract}

\newpage

\section{Introduction}

We study the nonparametric regression model
\begin{equation}
\begin{array}{rcl}
Y_{i} & = & h_{0}(X_{i})+\epsilon _{i} \\
E[\epsilon _{i}|X_{i}] & = & 0%
\end{array}
\label{npreg}
\end{equation}%
where $Y_{i}\in {\mathbb{R}}$ is a scalar response variable, $X_{i}\in {%
\mathcal{X}}\subseteq {\mathbb{R}}^{d}$ is a $d$-dimensional regressor
(predictor variable), and the conditional mean function $%
h_{0}(x)=E[Y_{i}|X_{i}=x]$ belongs to a H\"{o}lder space of smoothness $p>0$%
. We are interested in series least squares (LS) estimation\footnote{%
Other terms for series LS appearing in the literature include series
regression and linear sieve regression, but we use series LS hereafter. The
series LS estimator falls into the general class of nonparametric sieve
M-estimators.} of $h_{0}$ under sup-norm loss and inference on possibly
nonlinear functionals of $h_{0}$ allowing for weakly dependent regressors
and heavy-tailed errors $\epsilon _{i}$.\footnote{%
The error $\epsilon _{i}$ is heavy-tailed in the sense that $E[\left\vert
\epsilon _{i}\right\vert ^{2+\delta }]=\infty $ for $\delta >d/p$ is
allowed; say $E[\left\vert \epsilon _{i}\right\vert ^{4}]=\infty $ is
allowed.} 

For i.i.d. data, \cite{Stone1982} shows that $(n/\log n)^{-p/(2p+d)}$ is the minimax lower
bound in sup-norm risk for estimation of $h_{0}$ over a H\"{o}lder ball of
smoothness $p>0$. For strictly stationary beta-mixing regressors, we show
that spline and wavelet series LS estimators $\widehat{h}$ of $h_{0}$ attain
the optimal uniform rate of \cite{Stone1982} under a mild unconditional
moment condition $E[\left\vert \epsilon _{i}\right\vert ^{2+(d/p)}]<\infty $
imposed on the martingale difference errors.

More generally, we assume the error process $\{\epsilon _{i}\}_{i=-\infty
}^{\infty }$ is a martingale difference sequence but impose no explicit weak
dependence condition on the regressor process $\{X_{i}\}_{i=-\infty
}^{\infty }$. Rather, weak dependence of the regressor process is formulated
in terms of convergence of a certain random matrix. We verify this condition
for absolutely regular (beta-mixing) sequences by deriving a new exponential
inequality for sums of weakly dependent random matrices. This new inequality
then leads to a sharp upper bound on the sup-norm variance term of series LS
estimators with an arbitrary basis. When combined with a general upper bound
on the sup-norm bias term of series LS estimators, the sharp sup-norm
variance bound immediately leads to a general upper bound on the sup-norm
convergence rate of series LS estimators with an arbitrary basis and weakly
dependent data.

In our sup-norm bias and variance decomposition of series LS estimators, the
bound on the sup-norm bias term depends on the sup norm of the empirical $%
L^{2}$ projection onto the linear sieve space. The sup norm of the empirical
$L^{2}$ projection varies with the choice of the (linear sieve) basis. For
spline regression with i.i.d. data, \cite{Huang2003} shows that the sup norm
of the empirical $L^{2}$ projection onto splines is bounded with probability
approaching one (wpa1). Using our new exponential inequality for sums of
weakly dependent random matrices, his bound is easily extended to spline
regression with weakly dependent regressors. In addition, we show in Theorem \ref{c-pstable} that, for
either i.i.d. or weakly dependent regressors, the sup norm of the empirical $%
L^{2}$ projection onto compactly supported wavelet bases is also bounded
wpa1 (this property is called sup-norm stability of empirical $L^2$ projection). These tight bounds lead to sharp sup-norm bias control for spline and
wavelet series LS estimators. They allow us to show that spline and wavelet
series LS estimators achieve the optimal sup-norm convergence rate with weakly dependent data and heavy-tailed errors (e.g., $E[\epsilon_i^4] = \infty$ is allowed).

Sup-norm (uniform) convergence rates of series LS estimators have
been studied previously by \cite{Newey1997} and \cite{deJong2002} for i.i.d. data and \cite{LeeRobinson} for spatially
dependent data. But the uniform convergence rates obtained in these papers
are slower than the optimal rate of \cite{Stone1982}.\footnote{%
See \cite{hansen-ET}, \cite{Kristensen2009}, \cite{masry}, \cite{Tsybakov2009}, \cite%
{CattaneoFarrell} and the references therein for attainability of the
optimal uniform convergence rates with kernel, local linear regression and
partitioning estimators.} In an important paper on series LS regression with i.i.d. data, \cite{BCK2013} establish the attainability of the optimal sup-norm rates of series LS estimators using spline, local polynomial
partition, wavelet and other series possessing the property of sup-norm stability of the $L^2$ projection (or bounded Lebesgue constant)  under the
conditional moment condition $\sup_{x}E[\left\vert \epsilon _{i}\right\vert
^{2+\delta }|X_{i}=x]<\infty $ for some $\delta >d/p$. Our Theorem \ref{t-wavd} on the sup-norm stability of $L^2$ projection of the wavelet basis is used by \cite{BCK2013} to show that the wavelet series LS estimator achieves the optimal sup-norm rate under their conditional moment requirement. Our paper contributes to the literature by showing that spline and wavelet series
LS estimators attain the optimal sup-norm rate with
strictly stationary beta-mixing regressors under the weaker unconditional
moment requirement $E[\left\vert \epsilon _{i}\right\vert ^{2+(d/p)}]<\infty
$.

As another application of our new exponential inequality, under very weak
conditions we obtain sharp $L^{2}$ convergence rates for series LS
estimators with weakly dependent regressors.
For example, under the minimal bounded conditional second moment restriction
($\sup_{x}E[\left\vert \epsilon _{i}\right\vert ^{2}|X_{i}=x]<\infty $), our
$L^{2}$-norm rates for trigonometric polynomial, spline or wavelet series LS
estimators attain \cite{Stone1982}'s optimal $L^{2}$-norm rate of $%
n^{-p/(2p+d)}$ with strictly stationary, exponentially beta-mixing
(respectively algebraically beta-mixing at rate $\gamma $) regressors with $%
p>0$ (resp. $p>d/(2\gamma )$), while the power series LS estimator attains
the same optimal rate with exponentially (resp. algebraically) beta-mixing
regressors for $p>d/2$ (resp. $p>d(2+\gamma )/(2\gamma )$). It is
interesting to note that for a smooth conditional mean function, we obtain
the optimal $L^{2}$ convergence rates for these commonly used series LS
estimators with weakly dependent regressors without requiring the existence
of higher-than-second unconditional moments of the error terms. Previously, \cite{Newey1997} derived the optimal $L^2$ convergence rates of series LS estimator under i.i.d. data and the restriction of $K^2 /n = o(1)$ for spline and trigonometric series (and $K^3 / n=o(1)$ for power series), where $K$ is the series number of terms. The restriction on $K$ is relaxed to $K (\log K)/ n = o(1)$ in \cite{Huang2003b} for splines and in \cite{BCK2013} for wavelets, trigonometric and other series under i.i.d. data. We show that the optimal $L^2$ convergence rates are still attainable for splines, wavelets, trigonometric and other series under exponentially beta-mixing and $K (\log K)^2 / n = o(1)$.

We also show that feasible asymptotic inference can be performed on a
possibly nonlinear functional $f(h_{0})$ using the plug-in series LS
estimator $f(\widehat{h})$. We establish the asymptotic normality of $f(%
\widehat{h})$ and of the corresponding Student t statistic for weakly
dependent data under mild low-level conditions. When specializing to general
irregular (i.e., slower than $\sqrt{n}$-estimable) but sup-norm bounded
\emph{linear} functionals of spline or wavelet series LS estimators with
i.i.d. data, we obtain the asymptotic normality of $f(\widehat{h})$
\begin{equation*}
\frac{\sqrt{n}(f(\widehat{h})-f(h_{0}))}{V_{K}^{1/2}}\rightarrow _{d}N(0,1)
\end{equation*}%
under remarkably mild conditions of (1) uniform integrability ($\sup_{x\in
\mathcal{X}}E[\epsilon _{i}^{2}\{|\epsilon _{i}|>\ell
(n)\}|X_{i}=x]\rightarrow 0$ for any $\ell (n)\rightarrow \infty $ as $%
n\rightarrow \infty $), and (2) $K^{-p/d}\sqrt{n/V_{K}}=o(1)$, $(K\log
K)/n=o(1)$, where $V_{K}$ is the sieve
variance that grows with $K$ for irregular functionals. These conditions
coincide with the weakest known conditions in \cite{Huang2003} for the
pointwise asymptotic normality of spline LS estimators, except we also allow
for other irregular linear functionals of spline or wavelet LS estimators.\footnote{Most of the literature has focused on the case of i.i.d. or strictly stationary data. See \cite{Andrews1991} for asymptotic normality of linear functionals of series LS estimators with non-identically distributed data.}
When specializing to general irregular but sup-norm bounded \emph{nonlinear}
functionals of spline or wavelet series LS estimators with i.i.d. data, we
obtain asymptotic normality of $f(\widehat{h})$ (and of its t statistic)
under conditions (1) and (3) $K^{-p/d}\sqrt{n/V_{K}}=o(1)$, $K^{(2+\delta
)/\delta }(\log n)/n\lesssim 1$ (and $K^{(2+\delta )/\delta }(\log n)/n=o(1)$
for the t statistic) for $\delta \in (0,2)$ such that $E[\left\vert \epsilon
_{i}\right\vert ^{2+\delta }]<\infty $. These conditions are much weaker
than the well-known conditions in \cite{Newey1997} for the asymptotic
normality of a nonlinear functional and its t statistic of spline LS
estimator, namely $K^{-p/d}\sqrt{n}=o(1)$, $K^{4}/n=o(1)$ and $%
\sup_{x}E[\left\vert \epsilon _{i}\right\vert ^{4}|X_{i}=x]<\infty $.
Moreover, under a slightly more restrictive growth condition on $K$ but
without the need to increase $\delta $, we show that our mild sufficient
conditions for the i.i.d. case extend naturally to the weakly dependent case. Our conditions for the weakly dependent case relax the higher-order-moment requirement in \cite{ChenLiaoSun} for sieve t inference on nonlinear functionals of linear sieve time series LS regressions.

Our paper improves upon the existing results on the asymptotic normality of t statistics of possibly nonlinear functionals of a series LS estimator under dependent data by allowing for heavy-tailed errors $\epsilon_i$ and relaxing the growth rates of the series term $K$ but maintaining the bounded conditional error variance assumption. For i.i.d. data, \cite{Hansen2014} derives pointwise asymptotic normality for linear functionals of a series LS estimator allowing for unbounded regressors, unbounded conditional error variance, but requiring $E[\left\vert \epsilon
_{i}\right\vert ^{4+\eta }]<\infty $ for some $\eta > 0$. In addition to pointwise limiting distribution results, \cite{BCK2013} also provide uniform limit theory and uniform confidence intervals for linear functionals of a series LS estimator with i.i.d. data.

Our paper, \cite{BCK2013} and \cite{Hansen2014} all employ tools from recent random matrix theory to derive various new results for series LS estimation. \cite{BCK2013} are the first to apply the non-commutative Khinchin random matrix inequality for i.i.d. data. Instead, we apply an exponential inequality for sums of independent random matrices due to \cite{Tropp2012}. Our results for series LS with weakly dependent data rely crucially on our extension of Tropp's matrix exponential inequality from i.i.d. data to weakly dependent data. See \cite{Hansen2014} for other applications of Tropp's inequality to series LS estimators with i.i.d. data.

Since economic and financial time series data often have infinite forth
moments, the new improved rates and inference results in our paper should be
very useful to the literatures on nonparametric estimation and testing of
nonlinear time series models (see, e.g., \cite{Robinson1989}, \cite%
{LiHsiaoZinn}, \cite{FanYao}, \cite{Chen2013}). Moreover, our new
exponential inequality for sums of weakly dependent random matrices should
be useful in series LS estimation of spatially dependent models and in other
contexts as well.\footnote{%
In our ongoing work on sieve estimation of semi/nonparametric conditional
moment restriction models with time series data, this new exponential
inequality also enables us to establish asymptotic properties under weaker
conditions.}

The rest of the paper is organized as follows. Section \ref{main sec} first
derives general upper bounds on the sup-norm convergence rates of series LS
estimators with an arbitrary basis. It then shows that spline and wavelet
series LS estimators attain the optimal sup-norm rates, allowing for weakly
dependent data and heavy tailed error terms. It also presents general sharp $%
L^{2}$-norm convergence rates of series LS estimators with an arbitrary
basis under very mild conditions. Section \ref{inference sec} provides the
asymptotic normality of sieve t statistics for possibly nonlinear
functionals of $h_{0}$. Section \ref{ei sec} provides new exponential
inequalities for sums of weakly dependent random matrices, and a
reinterpretation of equivalence of the theoretical and empirical $L^{2}$
norms as a criterion regarding convergence of a certain random matrix.
Section \ref{sec-stable} shows the sup-norm stability of the empirical $%
L^{2} $ projections onto compactly supported wavelet bases, which provides a
tight upper bound on the sup-norm bias term for the wavelet series LS
estimator. The results in Sections \ref{ei sec} and \ref{sec-stable} are of
independent interest. Section \ref{sieve def} contains a brief review of
spline and wavelet sieve bases. Proofs and ancillary results are presented
in Section \ref{proofs sec}.

\paragraph{Notation:}

Let $\lambda _{\min }(\cdot )$ and $\lambda _{\max }(\cdot )$ denote the
smallest and largest eigenvalues, respectively, of a matrix. The exponent $%
^{-}$ denotes the Moore-Penrose generalized inverse. $\Vert \cdot \Vert $
denotes the Euclidean norm when applied to vectors and the matrix spectral
norm (i.e., largest singular value) when applied to matrices, and $\Vert
\cdot \Vert _{\ell ^{p}}$ denotes the $\ell ^{p}$ norm when applied to
vectors and its induced operator norm when applied to matrices (thus $\Vert
\cdot \Vert =\Vert \cdot \Vert _{\ell ^{2}}$). If $\{a_{n}:n\geq 1\}$ and $%
\{b_{n}:n\geq 1\}$ are two sequences of non-negative numbers, $a_{n}\lesssim
b_{n}$ means there exists a finite positive $C$ such that $a_{n}\leq Cb_{n}$
for all $n$ sufficiently large, and $a_{n}\asymp b_{n}$ means $a_{n}\lesssim
b_{n}$ and $b_{n}\lesssim a_{n}$. $\#{\mathcal{S}}$ denotes the cardinality
of a set ${\mathcal{S}}$ of finitely many elements. Given a strictly
stationary process $\{X_{i}\}$ and $1\leq p<\infty $, we let $L^{p}(X)$
denote the function space consisting of all (equivalence classes) of
measurable functions $f$ for which the $L^{p}(X)$ norm $\Vert f\Vert
_{L^{p}(X)}\equiv E[|f(X_{i})|^{p}]^{1/p}$ is finite, and we let $L^{\infty
}(X)$ denote the space of bounded functions under the sup norm $\Vert \cdot
\Vert _{\infty }$, i.e., if $f:{\mathcal{X}}\rightarrow {\mathbb{R}}$ then $%
\Vert f\Vert _{\infty }\equiv \sup_{x\in {\mathcal{X}}}|f(x)|$.

\section{Uniform Convergence Rates}

\label{main sec}

In this section we present some general results on uniform convergence
properties of nonparametric series LS estimators with weakly dependent data.

\subsection{Estimator and basic assumptions}

In nonparametric series LS estimation, the conditional mean function $h_{0}$
is estimated by least squares regression of $Y_{1},\ldots ,Y_{n}$ on a
vector of sieve basis functions evaluated at $X_{1},\ldots ,X_{n}$. The
standard series LS estimator of the conditional mean function $h_{0}$ is
\begin{equation}
\widehat{h}(x)=b^{K}(x)^{\prime }(B^{\prime }B)^{-}B^{\prime }Y
\label{e-hhatold}
\end{equation}%
where $b_{K1},\ldots ,b_{KK}$ are a collection of $K$ sieve basis functions
and
\begin{eqnarray}
b^{K}(x) &=&(b_{K1}(x),\ldots ,b_{KK}(x))^{\prime } \\
B &=&(b^{K}(X_{1}),\ldots ,b^{K}(X_{n}))^{\prime } \\
Y &=&(Y_{1},\ldots ,Y_{n})^{\prime }\,.
\end{eqnarray}%
Choosing a particular class of sieve basis function and the dimension $K$
are analogous to choosing the type of kernel and bandwidth, respectively, in
kernel regression techniques. The basis functions are chosen such that their
closed linear span $B_{K}=clsp\{b_{K1},\ldots ,b_{KK}\}$ can well
approximate the space of functions in which $h_{0}$ is assumed to belong.

When the data $\{(X_{i},Y_{i})\}_{i=1}^{n}$ are a random sample it is often
reasonable to assume that $X$ is supported on a compact set $\mathcal{X}%
\subset \mathbb{R}^{d}$. However, in a time-series setting it may be
necessary to allow the support $\mathcal{X}$ of $X$ to be infinite, as in
the example of nonparametric autoregressive regression with a student t
distributed error term. See, e.g., \cite{FanYao} and \cite{Chen2013} for
additional examples and references.

To allow for possibly unbounded support $\mathcal{X}$ of $X_{i}$ we modify
the usual series LS estimator and notion of convergence. First, we weight
the basis functions by a sequence of non-negative weighting functions $w_{n}:%
\mathcal{X}\rightarrow \{0,1\}$ given by
\begin{equation}
w_{n}(x)=\left\{
\begin{array}{cc}
1 & \mbox{if }x\in \mathcal{D}_{n} \\
0 & \mbox{otherwise}%
\end{array}%
\right.
\end{equation}%
where $\mathcal{D}_{n}\subseteq \mathcal{X}$ is compact, convex, and has
nonempty interior, and $\mathcal{D}_{n}\subseteq \mathcal{D}_{n+1}$ for all $%
n$. The resulting series LS estimator is then
\begin{equation}
\widehat{h}(x)=b_{w}^{K}(x)^{\prime }(B_{w}^{\prime }B_{w})^{-}B_{w}^{\prime
}Y  \label{e-newhdef}
\end{equation}%
where $b_{K1},\ldots ,b_{KK}$ are a collection of $K$ sieve basis functions
and
\begin{eqnarray}
b_{w}^{K}(x) &=&(b_{K1}(x)w_{n}(x),\ldots ,b_{KK}(x)w_{n}(x))^{\prime } \\
B_{w} &=&(b_{w}^{K}(X_{1}),\ldots ,b_{w}^{K}(X_{n}))^{\prime }\,.
\end{eqnarray}%
Second, we consider convergence in the (sequence of) weighted sup norm(s) $%
\Vert \cdot \Vert _{\infty ,w}$ given by
\begin{equation}
\Vert f\Vert _{\infty ,w}=\sup_{x}|f(x)w_{n}(x)|=\sup_{x\in \mathcal{D}%
_{n}}|f(x)|
\end{equation}%
This modification is made because simple functions, such as polynomials of $%
x $, have infinite sup norm when $X_{i}$ has unbounded support, but will
have finite weighted sup norm.

\begin{remark}
When $\mathcal{X}$ is compact we may simply set $w_n(x) = 1$ for all $x \in
\mathcal{X}$ and all $n$. With such a choice of weighting, the series LS
estimator with weighted basis trivially coincides with the series LS
estimator in (\ref{e-hhatold}) with unweighted basis, and $%
\|\cdot\|_{\infty,w} = \|\cdot\|_\infty$.
\end{remark}

When $\mathcal{X}$ is unbounded there are several possible choices for $%
\mathcal{D}_n$. For instance, we may take $\mathcal{D}_n = \mathcal{D}$ for
all $n$, where $\mathcal{D }\subseteq \mathcal{X}$ is a fixed compact convex
set with nonempty interior. This approach is not without precedent in the
nonparametric analysis of nonlinear time series models. For example, \cite%
{HuangShen2004} use a similar approach to trim extreme observations in
nonparametric functional coefficient regression models, following \cite%
{TjostheimAuestad1994}. More generally, we can consider an expanding
sequence of compact nonempty sets $\mathcal{D}_n \subset \mathcal{X}$ with $%
\mathcal{D}_n \subseteq \mathcal{D}_{n+1}$ for all $n$ and set $w_n(x) = \{x
\in \mathcal{D}_n\}$ for all $n$. For example, if $\mathcal{X }= \mathbb{R}%
^d $ we could take $\mathcal{D}_n = \{x \in \mathbb{R}^d : \|x\|_p \leq
r_n\} $ where $0 < r_n \leq r_{n+1} < \infty$ for all $n$. This approach is
similar to excluding functions far from the support of the data when
performing series LS estimation with a compactly-supported wavelet basis for
$L^2(\mathbb{R})$ or $L^2(\mathbb{R}^d) $. We defer estimation with smooth
weighting functions of the form $w_n(x) = (1+\|x\|^2)^{-\omega}$ or $w_n(x)
= \exp(-\|x\|^\omega)$ to future research.

We first introduce some mild regularity conditions that are satisfied by
typical regression models and most linear sieve bases.

\begin{assumption}
\label{X reg} (i) $\{X_{i}\}_{i=-\infty }^{\infty }$ is strictly stationary,
(ii) ${\mathcal{X}}\subseteq \mathbb{R}^{d}$ is convex and has nonempty
interior.
\end{assumption}

\begin{assumption}
\label{resid reg} (i) $\{\epsilon _{i},{\mathcal{F}}_{i-1}\}_{i=1}^{n}$ with
${\mathcal{F}}_{i-1}=\sigma (X_{i},\epsilon _{i-1},X_{i-1},\ldots )$ is a
strictly stationary martingale difference sequence, (ii) $E[\epsilon
_{i}^{2}|{\mathcal{F}}_{i-1}]$ is uniformly bounded for all $i\geq 1$,
almost surely, (iii) $E[|\epsilon _{i}|^{2+\delta }]<\infty $ for some $%
\delta >0$.
\end{assumption}

Let $N(\mathcal{D }_{n},\epsilon )$ denote the internal $\epsilon $-covering
number of $\mathcal{D }_{n}$ with respect to the Euclidean norm (i.e. the
minimum number of points $x_{1},\ldots ,x_{m}\in \mathcal{D }_{n}$ such that
the collection of $\epsilon $-balls centered at each of $x_{1},\ldots ,x_{m}$
cover $\mathcal{D }_{n})$.

\begin{assumption}
\label{a-wdelta} (i) $\mathcal{D }_{n}$ is compact, convex, has nonempty
interior, and $\mathcal{D}_{n}\subseteq \mathcal{D}_{n+1}$ for all $n$, (ii)
there exists $\nu _{1},\nu _{2}>0$ such that $N(\mathcal{D}_n,\epsilon
)\lesssim n^{\nu _{1}}\epsilon ^{-\nu _{2}}$.
\end{assumption}

Define $\zeta _{K,n}\equiv \sup_{x}\Vert b_{w}^{K}(x)\Vert $ and $\lambda
_{K,n}\equiv \left[ \lambda _{\min
}(E[b_{w}^{K}(X_{i})b_{w}^{K}(X_{i})^{\prime }])\right] ^{-1/2}$.

\begin{assumption}
\label{sieve reg gen} (i) there exist $\omega _{1},\omega _{2}\geq 0$ s.t. $%
\sup_{x\in \mathcal{D}_{n}}\Vert \nabla b_{w}^{K}(x)\Vert \lesssim n^{\omega
_{1}}K^{\omega _{2}}$, (ii) there exist $\varpi _{1}\geq 0,\varpi _{2}>0$
s.t. $\zeta _{K,n}\lesssim n^{\varpi _{1}}K^{\varpi _{2}}$, (iii) $\lambda
_{\min }(E[b_{w}^{K}(X_{i})b_{w}^{K}(X_{i})^{\prime }])>0$ for each $K$ and $%
n$.
\end{assumption}

Assumptions \ref{X reg} and \ref{resid reg} trivially nest i.i.d. sequences,
but also allow the regressors to exhibit quite general weak dependence. Note
that Assumption \ref{resid reg}(ii) reduces to $\sup_{x}E[\epsilon
_{i}^{2}|X_{i}=x]<\infty $ in the i.i.d. case.
Suitable choice of $\delta $ in Assumption \ref{resid reg}(iii) for
attainability of the optimal uniform rate will be explained subsequently.
Strict stationarity of $\{\epsilon _{i}\}$ in Assumption \ref{resid reg} may
be dropped provided the sequence $\{|\epsilon _{i}|^{2+\delta }\}$ is
uniformly integrable. However, strict stationarity is used to present simple
sufficient conditions for the asymptotic normality of functionals of $%
\widehat{h}$ in Section \ref{inference sec}.

Assumption \ref{a-wdelta} is trivially satisfied when $\mathcal{X}$ is
compact and $\mathcal{D}_n = \mathcal{X}$ for all $n$. More generally, when $%
\mathcal{X}$ is noncompact and $\mathcal{D}_{n}$ is an expanding sequence of
compact subsets of $\mathcal{X}$ as described above, Assumption \ref%
{a-wdelta}(ii) is satisfied provided each $\mathcal{D}_{n}$ is contained in
an Euclidean ball of radius $r_{n}\lesssim n^{\nu }$ for some $\nu >0$.%
\footnote{%
By translational invariance we may assume that $\mathcal{D}_{n}$ is centered
at the origin. Then $\mathcal{D}_{n}\subseteq \mathcal{R}_{n}=[-
r_{n},r_{n}]^{d}$. We can cover $\mathcal{R}_{n}$ with $(r_n/\epsilon)^d$ $%
\ell^\infty$-balls of radius $\epsilon $, each of which is contained in an
Euclidean ball of radius $\epsilon \sqrt d$. Therefore, $N(\mathcal{D}%
_{n},\epsilon )\leq (\sqrt dr_{n})^{d}\epsilon ^{-d}\lesssim n^{\nu
d}\epsilon ^{-d}$.}

Assumption \ref{sieve reg gen} is a mild regularity condition on the sieve
basis functions. When $\mathcal{X}$ is compact and rectangular this
assumption is satisfied by all the widely used series (or linear sieve
bases) with $\lambda _{K,n}\lesssim 1$, and $\zeta _{K,n}\lesssim \sqrt{K}$
for tensor-products of univariate polynomial spline, trigonometric
polynomial or wavelet bases, and $\zeta _{K,n}\lesssim K$ for
tensor-products of power series or orthogonal polynomial bases (see, e.g.,
\cite{Newey1997}, \cite{Huang1998}, and \cite{Chen2007}). See \cite%
{DeVoreLorentz} and \cite{BCK2013} for additional bases with either $\zeta _{K,n}\lesssim \sqrt{%
K}$ or $\zeta _{K,n}\lesssim K$ properties.

Let $\widetilde{b}_{w}^{K}(x)$ denote the orthonormalized vector of basis
functions, namely
\begin{equation}
\widetilde{b}_{w}^{K}(x)=E[b_{w}^{K}(X_{i})b_{w}^{K}(X_{i})^{\prime
}]^{-1/2}b_{w}^{K}(x)\,,
\end{equation}%
and let $\widetilde{B}_{w}=(\widetilde{b}_{w}^{K}(X_{1}),\ldots ,\widetilde{b%
}_{w}^{K}(X_{n}))^{\prime }$.

\begin{assumption}
\label{a-gram} Either: (a) $\{X_{i}\}_{i=1}^{n}$ is i.i.d. and $\zeta
_{K,n}\lambda _{K,n}\sqrt{(\log K)/n}=o(1)$, or \newline
(b) $\Vert (\widetilde{B}_{w}^{\prime }\widetilde{B}_{w}/n)-I_{K}\Vert
=o_{p}(1)$.
\end{assumption}

Assumption \ref{a-gram} is a mild but powerful condition that ensures the
empirical and theoretical $L^{2}$ norms are equivalent over the linear sieve
space wpa1 (see Section \ref{ei sec} for details). In fact, to establish
many of our results below with weakly dependent data, nothing further about
the weak dependence properties of the regressor process $\{X_{i}\}_{i=1}^{n}$
needs to be assumed beyond convergence of $\Vert \widetilde{B}_{w}^{\prime }%
\widetilde{B}_{w}/n-I_{K}\Vert $ to zero. In the i.i.d. case, the following
Lemma shows that part (a) of Assumption \ref{a-gram} automatically implies $%
\Vert (\widetilde{B}_{w}^{\prime }\widetilde{B}_{w}/n)-I_{K}\Vert =o_{p}(1)$.

\begin{lemma}
\label{Bconvi.i.d.} Under Assumption \ref{sieve reg gen}(iii), if $%
\{X_{i}\}_{i=1}^{n}$ is i.i.d. then
\begin{equation}
\Vert (\widetilde{B}_{w}^{\prime }\widetilde{B}_{w}/n)-I_{K}\Vert
=O_{p}\left( \zeta _{K,n}\lambda _{K,n}\sqrt{{(\log K)}/{n}}\right) =o_{p}(1)
\notag
\end{equation}%
provided $\zeta _{K,n}\lambda _{K,n}\sqrt{{(\log K)}/{n}}=o(1)$.
\end{lemma}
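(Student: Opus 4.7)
The plan is to recognize $\widetilde{B}_{w}^{\prime }\widetilde{B}_{w}/n - I_{K}$ as a normalized sum of i.i.d.\ mean-zero symmetric random matrices and then apply a matrix Bernstein / Tropp-type exponential inequality. Specifically, let
\[
 \xi_{i} = \widetilde{b}_{w}^{K}(X_{i})\widetilde{b}_{w}^{K}(X_{i})^{\prime} - I_{K},
\]
so that $\widetilde{B}_{w}^{\prime }\widetilde{B}_{w}/n - I_{K} = n^{-1}\sum_{i=1}^{n}\xi_{i}$. Because $\widetilde{b}_{w}^{K}$ is the orthonormalization of $b_{w}^{K}$, we have $E[\widetilde{b}_{w}^{K}(X_{i})\widetilde{b}_{w}^{K}(X_{i})^{\prime}] = I_{K}$, hence $E[\xi_{i}] = 0$, and by i.i.d.\ sampling the $\xi_{i}$ are independent.

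The next step is to obtain the two inputs needed for matrix Bernstein: a uniform bound on $\|\xi_{i}\|$ and a bound on the matrix variance $\|\sum_{i}E[\xi_{i}^{2}]\|$. For the uniform bound, note that
\[
 \|\widetilde{b}_{w}^{K}(x)\| \le \bigl\|E[b_{w}^{K}(X_{i})b_{w}^{K}(X_{i})^{\prime}]^{-1/2}\bigr\|\,\|b_{w}^{K}(x)\| \le \lambda_{K,n}\zeta_{K,n},
\]
so $\|\xi_{i}\| \le \|\widetilde{b}_{w}^{K}(X_{i})\|^{2} + 1 \lesssim \lambda_{K,n}^{2}\zeta_{K,n}^{2}$. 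For the variance, since $(\widetilde{b}_{w}^{K}(X_{i})\widetilde{b}_{w}^{K}(X_{i})^{\prime})^{2} = \|\widetilde{b}_{w}^{K}(X_{i})\|^{2}\,\widetilde{b}_{w}^{K}(X_{i})\widetilde{b}_{w}^{K}(X_{i})^{\prime}$, one obtains $E[(\widetilde{b}_{w}^{K}(X_{i})\widetilde{b}_{w}^{K}(X_{i})^{\prime})^{2}] \preceq \lambda_{K,n}^{2}\zeta_{K,n}^{2}\,I_{K}$, and therefore $\|E[\xi_{i}^{2}]\| \lesssim \lambda_{K,n}^{2}\zeta_{K,n}^{2}$, giving $\|\sum_{i=1}^{n}E[\xi_{i}^{2}]\| \lesssim n\lambda_{K,n}^{2}\zeta_{K,n}^{2}$.

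Applying the matrix Bernstein inequality (Tropp, 2012) yields, for $t>0$,
\[
 P\!\left(\bigl\|\textstyle\sum_{i=1}^{n}\xi_{i}\bigr\| \ge t\right) \le 2K\exp\!\left(\frac{-t^{2}/2}{C_{1}n\lambda_{K,n}^{2}\zeta_{K,n}^{2} + C_{2}\lambda_{K,n}^{2}\zeta_{K,n}^{2}\,t/3}\right).
\]
Taking $t = C\lambda_{K,n}\zeta_{K,n}\sqrt{n\log K}$ for a sufficiently large constant $C$, the hypothesis $\lambda_{K,n}\zeta_{K,n}\sqrt{(\log K)/n} = o(1)$ ensures the variance term dominates the Bernstein denominator, and the right-hand side is $o(1)$. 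Dividing through by $n$ gives $\|\widetilde{B}_{w}^{\prime }\widetilde{B}_{w}/n - I_{K}\| = O_{p}(\lambda_{K,n}\zeta_{K,n}\sqrt{(\log K)/n}) = o_{p}(1)$, as claimed.

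The argument is almost entirely a routine invocation of matrix Bernstein; the only step that requires any care is verifying that the Bernstein ``range'' term $\lambda_{K,n}^{2}\zeta_{K,n}^{2}\,t/3$ is negligible relative to the ``variance'' term $n\lambda_{K,n}^{2}\zeta_{K,n}^{2}$ at the chosen $t$, which is exactly where the bandwidth-type condition $\lambda_{K,n}\zeta_{K,n}\sqrt{(\log K)/n} = o(1)$ enters. Everything else is book-keeping on operator norms of orthonormalized basis functions.
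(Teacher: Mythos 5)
Your proposal is correct and follows essentially the same route as the paper: the paper's proof applies its Corollary \ref{troppcor} (the consequence of Tropp's matrix Bernstein inequality, Theorem \ref{troppthm}) to $\Xi_{i,n}=n^{-1}(\widetilde{b}_{w}^{K}(X_{i})\widetilde{b}_{w}^{K}(X_{i})^{\prime}-I_{K})$ with $R_{n}\leq n^{-1}(\zeta_{K,n}^{2}\lambda_{K,n}^{2}+1)$ and $\sigma_{n}^{2}\leq n^{-1}(\zeta_{K,n}^{2}\lambda_{K,n}^{2}+1)$, which is exactly your argument up to where the $n^{-1}$ normalization is inserted. Your explicit verification of the range and variance bounds and of the choice $t=C\lambda_{K,n}\zeta_{K,n}\sqrt{n\log K}$ just spells out what the paper leaves implicit.
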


\begin{remark}
\label{rmk2.2} Consider the compact support case in which $\mathcal{X}%
=[0,1]^{d}$ and $w_{n}(x)=1$ for all $x\in \mathcal{X}$ and all $n$ (so that
$b_{Kk}(x)w_{n}(x)=b_{Kk}(x)$ for all $n$ and $K$) and suppose the density
of $X_{i}$ is uniformly bounded away from zero and infinity over $\mathcal{X}
$. In this setting, we have $\lambda _{K,n}\lesssim 1$. If $%
\{X_{i}\}_{i=1}^{n}$ is i.i.d., then Assumption \ref{a-gram} is satisfied
with $\sqrt{(K\log K)/n}=o(1)$ for spline, trigonometric polynomial or
wavelet bases, and with $K\sqrt{(\log K)/n}=o(1)$ for (tensor-product) power
series.
\end{remark}

When the regressors are $\beta $-mixing (see Section \ref{ei sec} for
definition), the following Lemma shows that Assumption \ref{a-gram}(b) is
still easily satisfied.

\begin{lemma}
\label{Bconvbeta} Under Assumption \ref{sieve reg gen}(iii), if $%
\{X_{i}\}_{i=-\infty }^{\infty }$ is strictly stationary and $\beta $-mixing
with mixing coefficients such that one can choose an integers $q=q(n)\leq
n/2 $ with $\beta (q)n/q=o(1)$, then
\begin{equation}
\Vert (\widetilde{B}_{w}^{\prime }\widetilde{B}_{w}/n)-I_{K}\Vert
=O_{p}\left( \zeta _{K,n}\lambda _{K,n}\sqrt{{(q\log K)}/{n}}\right)
=o_{p}(1)  \notag
\end{equation}%
provided $\zeta _{K,n}\lambda _{K,n}\sqrt{{(q\log K)}/{n}}=o(1)$.
\end{lemma}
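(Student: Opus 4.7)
The plan is to rewrite the target matrix as an empirical average of mean-zero matrix summands and then invoke the matrix exponential inequality for $\beta$-mixing sums developed in Section \ref{ei sec}. Specifically, I would decompose
\[
\frac{\widetilde{B}_w'\widetilde{B}_w}{n} - I_K = \frac{1}{n}\sum_{i=1}^n \Xi_i, \qquad \Xi_i := \widetilde{b}_w^K(X_i)\widetilde{b}_w^K(X_i)' - I_K,
\]
and observe that $E[\Xi_i]=0$ by construction and that $\{\Xi_i\}$ is strictly stationary and $\beta$-mixing with the same mixing coefficients as $\{X_i\}$, since each $\Xi_i$ is a measurable function of $X_i$ alone.

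Next I would record two a priori bounds on the summands. By the definition of $\widetilde{b}_w^K$ and the identity $\|E[b_w^K(X_i)b_w^K(X_i)']^{-1/2}\|=\lambda_{K,n}$, I obtain $\sup_x\|\widetilde{b}_w^K(x)\|\le \zeta_{K,n}\lambda_{K,n}$, so $\|\Xi_i\|\le \zeta_{K,n}^2\lambda_{K,n}^2+1$ almost surely. For the matrix variance proxy, using $E[\widetilde{b}_w^K(X_i)\widetilde{b}_w^K(X_i)']=I_K$ gives $\|E[\Xi_i^2]\|\le \|E[\|\widetilde{b}_w^K(X_i)\|^2\widetilde{b}_w^K(X_i)\widetilde{b}_w^K(X_i)']\|+1\lesssim \zeta_{K,n}^2\lambda_{K,n}^2$.

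Feeding these two bounds into the $\beta$-mixing matrix Bernstein inequality from Section \ref{ei sec}, which is obtained by Berbee-type coupling of $\{\Xi_i\}$ with an independent-blocks sequence of block length $q$ and then applying Tropp's matrix Bernstein inequality to the independent-block sum, yields a deviation bound of the form
\[
\Bigl\|\frac{1}{n}\sum_{i=1}^n \Xi_i\Bigr\| \lesssim \zeta_{K,n}\lambda_{K,n}\sqrt{\frac{q\log K}{n}} + \zeta_{K,n}^2\lambda_{K,n}^2\frac{q\log K}{n}
\]
with probability tending to one. The residual coupling error is of order $\beta(q)n/q$, which is $o(1)$ by hypothesis; and under $\zeta_{K,n}\lambda_{K,n}\sqrt{(q\log K)/n}=o(1)$ the linear (variance) term dominates the quadratic one. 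This delivers the stated $O_p\bigl(\zeta_{K,n}\lambda_{K,n}\sqrt{(q\log K)/n}\bigr)=o_p(1)$ conclusion.

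The main obstacle is establishing the $\beta$-mixing matrix Bernstein inequality itself with the correct $\sqrt{q}$ inflation of the variance rate and with a coupling error no worse than $\beta(q)n/q$; once that tool is in place (it is precisely the extension of Tropp's inequality promised in Section \ref{ei sec}), the rest of the argument is the bookkeeping above, and the i.i.d. Lemma \ref{Bconvi.i.d.} is recovered by taking $q=1$.
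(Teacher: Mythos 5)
Your proposal is correct and follows essentially the same route as the paper: the paper's proof is exactly the reduction you describe, applying Corollary \ref{beta rate} (the Berbee-coupling extension of Tropp's inequality) to $\Xi_{i,n}=n^{-1}(\widetilde{b}_w^K(X_i)\widetilde{b}_w^K(X_i)'-I_K)$ with the bounds $R_n\leq n^{-1}(\zeta_{K,n}^2\lambda_{K,n}^2+1)$ and $s_n^2\leq n^{-2}(\zeta_{K,n}^2\lambda_{K,n}^2+1)$, the side condition $\zeta_{K,n}\lambda_{K,n}\sqrt{(q\log K)/n}=o(1)$ being precisely the requirement $R_n\sqrt{q\log(d_1+d_2)}=o(s_n\sqrt{n})$ that makes the variance term dominate. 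The only bookkeeping detail worth noting is that the variance proxy $s_n^2$ in Theorem \ref{beta tropp} is a maximum over cross-moments $\|E[\Xi_{i,n}\Xi_{j,n}']\|$ for all $i,j$, not just the diagonal term you bound, but these are controlled by the diagonal via Cauchy--Schwarz and stationarity, so your argument goes through.
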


\begin{remark}
Consider the compact support case from Remark \ref{rmk2.2}, with $%
\{X_{i}\}_{i=-\infty }^{\infty }$ strictly stationary and $\beta $-mixing.

\begin{enumerate}
\item[(i)] Exponential $\beta $-mixing: Assumption \ref{a-gram}(b) is
satisfied with $\sqrt{K(\log n)^{2}/n}=o(1)$ for (tensor-product) spline,
trigonometric polynomial or wavelet bases, and with $K\sqrt{(\log n)^{2}/n}%
=o(1)$ for (tensor-product) power series.

\item[(ii)] Algebraic $\beta $-mixing at rate $\gamma $: Assumption \ref%
{a-gram}(b) is satisfied with $\sqrt{(K\log K)/n^{\gamma /(1+\gamma )}}=o(1)$
for (tensor-product) spline, trigonometric polynomial or wavelet bases, and
with $K\sqrt{(\log K)/n^{\gamma /(1+\gamma )}}=o(1)$ for (tensor-product)
power series.
\end{enumerate}
\end{remark}

\subsection{A general upper bound on uniform convergence rates}

Let $B_{K,w}=clsp\{b_{K1}w_{n},\ldots ,b_{KK}w_{n}\}$ be a general weighted
linear sieve space. Let $\widetilde{h}$ denote the projection of $h_{0}$
onto $B_{K,w}$ under the empirical measure, that is,
\begin{equation}
\widetilde{h}(x)=b_{w}^{K}(x)^{\prime }(B_{w}^{\prime
}B_{w})^{-}B_{w}^{\prime }H_{0}=\widetilde{b}_{w}^{K}(x)^{\prime }(%
\widetilde{B}_{w}^{\prime }\widetilde{B}_{w})^{-}\widetilde{B}_{w}^{\prime
}H_{0}  \label{htilde0}
\end{equation}%
where $H_{0}=(h_{0}(X_{1}),\ldots ,h_{0}(X_{n}))^{\prime }$. The sup-norm
distance $\Vert \widehat{h}-h_{0}\Vert _{\infty ,w}$ may be trivially
bounded using
\begin{eqnarray}
\Vert \widehat{h}-h_{0}\Vert _{\infty ,w} &\leq &\Vert h_{0}-\widetilde{h}%
\Vert _{\infty ,w}+\Vert \widehat{h}-\widetilde{h}\Vert _{\infty ,w} \\
&=: &\mbox {bias term}+\mbox {variance term}\,.
\end{eqnarray}%

\textbf{Sharp bound on the sup-norm variance term}. The following result
establishes a sharp uniform convergence rate of the variance term for an
arbitrary linear sieve space. Convergence is established in sup norm rather
than the weighted sup norm $\Vert \cdot \Vert _{\infty ,w}$ because both $%
\widehat{h}$ and $\widetilde{h}$ have support $\mathcal{D}_{n}$. Therefore, $%
\Vert \widehat{h}-\widetilde{h}\Vert _{\infty }=\sup_{x\in \mathcal{D}_{n}}|%
\widehat{h}(x)-\widetilde{h}(x)|=\Vert \widehat{h}-\widetilde{h}\Vert
_{\infty ,w}$.

\begin{lemma}
\label{reglem gen} Let Assumptions \ref{X reg}(i)(ii), \ref{resid reg}%
(i)(ii)(iii), \ref{a-wdelta}, \ref{sieve reg gen}, and \ref{a-gram} hold.
Then
\begin{equation*}
\Vert \widehat{h}-\widetilde{h}\Vert _{\infty }=O_{p}\left( \zeta
_{K,n}\lambda _{K,n}\sqrt{(\log n)/{n}}\right) =o_{p}(1)
\end{equation*}%
as $n,K\rightarrow \infty $ provided the following are satisfied:

\begin{itemize}
\item[(i)] $(\zeta _{K,n}\lambda _{K,n})^{(2+\delta )/\delta }\lesssim \sqrt{%
(n/\log n)}$;

\item[(ii)] either: (a) $\{(X_{i},Y_{i})\}_{i=1}^{n}$ are i.i.d., or (b) $%
\sqrt{\frac{K}{\log n}}\times \Vert (\widetilde{B}_{w}^{\prime }\widetilde{B}%
_{w}/n)-I_{K}\Vert =O_{p}(1)$.
\end{itemize}
\end{lemma}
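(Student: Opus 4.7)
The plan is to start from the exact algebraic identity
\begin{equation*}
\widehat{h}(x) - \widetilde{h}(x) = \widetilde{b}_w^K(x)' (\widetilde{B}_w'\widetilde{B}_w/n)^{-} (\widetilde{B}_w'\epsilon/n),
\end{equation*}
where $\epsilon = (\epsilon_1,\ldots,\epsilon_n)'$, and split the right-hand side by adding and subtracting $I_K$ into a \emph{main term} $\widetilde{b}_w^K(x)'(\widetilde{B}_w'\epsilon/n)$ plus a \emph{remainder} $\widetilde{b}_w^K(x)'\bigl[(\widetilde{B}_w'\widetilde{B}_w/n)^{-} - I_K\bigr](\widetilde{B}_w'\epsilon/n)$. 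On the high-probability event $\{\|(\widetilde{B}_w'\widetilde{B}_w/n)-I_K\| \leq 1/2\}$ the generalized inverse equals the usual inverse (which holds wpa1 under either branch of Assumption \ref{a-gram}, via Lemma \ref{Bconvi.i.d.} for (ii)(a)).

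\textbf{Remainder term.} By sub-multiplicativity,
\begin{equation*}
\sup_{x\in\mathcal{D}_n}\bigl|\widetilde{b}_w^K(x)'[(\widetilde{B}_w'\widetilde{B}_w/n)^{-}-I_K](\widetilde{B}_w'\epsilon/n)\bigr| \;\leq\; \zeta_{K,n}\lambda_{K,n}\,\|(\widetilde{B}_w'\widetilde{B}_w/n)^{-}-I_K\|\,\|\widetilde{B}_w'\epsilon/n\|.
\end{equation*}
The MDS property of $\{\epsilon_i\}$ combined with Assumption \ref{resid reg}(ii) yields $E\|\widetilde{B}_w'\epsilon/n\|^2 \leq \operatorname{tr}(I_K)\cdot C/n \lesssim K/n$, hence $\|\widetilde{B}_w'\epsilon/n\| = O_p(\sqrt{K/n})$. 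Plugging in the rate $\|(\widetilde{B}_w'\widetilde{B}_w/n)-I_K\| = O_p(\sqrt{(\log n)/K})$ (which is precisely Assumption \ref{a-gram}(ii)(b), and which follows in case (ii)(a) from Lemma \ref{Bconvi.i.d.} together with condition (i)) delivers the target rate $O_p(\zeta_{K,n}\lambda_{K,n}\sqrt{(\log n)/n})$ for the remainder.

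\textbf{Main term via covering and truncation.} For $\sup_{x\in\mathcal{D}_n}|\widetilde{b}_w^K(x)'(\widetilde{B}_w'\epsilon/n)|$ I would use a discretization of $\mathcal{D}_n$ by a grid of radius $\epsilon_n = n^{-c}$ for large enough $c$. Assumption \ref{sieve reg gen}(i) combined with $\|E[b_w^K b_w^{K\prime}]^{-1/2}\| \leq \lambda_{K,n}$ gives a Lipschitz constant for $\widetilde{b}_w^K$ polynomial in $(n,K,\lambda_{K,n})$, so, together with the covering-number bound in Assumption \ref{a-wdelta}(ii) and the crude $\|\widetilde{B}_w'\epsilon/n\|=O_p(\sqrt{K/n})$, the interpolation error between grid points is negligible. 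At each grid point $x_j$, define the MDS $Z_{ij} = \widetilde{b}_w^K(x_j)'\widetilde{b}_w^K(X_i)\epsilon_i$ with respect to $\mathcal{F}_i$. I would truncate $\epsilon_i$ at level $T_n \asymp \sqrt{n/\log n}/(\zeta_{K,n}\lambda_{K,n})$, writing $\epsilon_i = \tilde\epsilon_i^{(1)} + \tilde\epsilon_i^{(2)}$ with $\tilde\epsilon_i^{(1)}$ the centered truncated part. For the bounded part I would apply Freedman's martingale Bernstein inequality: the summed conditional variance is controlled by $Cn(\zeta_{K,n}\lambda_{K,n})^2\cdot\|\widetilde{B}_w'\widetilde{B}_w/n\|=O_p(n(\zeta_{K,n}\lambda_{K,n})^2)$ and each summand is bounded by $2T_n(\zeta_{K,n}\lambda_{K,n})^2$; the choice of $T_n$ equates the two regimes of Bernstein at the target rate. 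A union bound over the polynomially many grid points absorbs a $\sqrt{\log n}$ factor. For the tail part I would apply Markov's inequality in conjunction with Assumption \ref{resid reg}(iii) to show $\sum_i E[|\epsilon_i|\mathbf{1}\{|\epsilon_i|>T_n\}]/n \lesssim T_n^{-(1+\delta)}E|\epsilon_i|^{2+\delta}$.

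\textbf{Where condition (i) enters and the main obstacle.} The hardest step is the main term: hitting the sharp rate $\zeta_{K,n}\lambda_{K,n}\sqrt{(\log n)/n}$ (and not merely $\zeta_{K,n}\lambda_{K,n}\sqrt{K/n}$) requires the truncation level to be chosen so that (a) the Bernstein variance regime dominates the linear regime, (b) the tail correction is negligible, and (c) the sup over the covering cannot spoil the rate. Condition (i), $(\zeta_{K,n}\lambda_{K,n})^{(2+\delta)/\delta}\lesssim\sqrt{n/\log n}$, is precisely the calibration that makes these three constraints compatible when $T_n\asymp\sqrt{n/\log n}/(\zeta_{K,n}\lambda_{K,n})$. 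Everything else is bookkeeping around the two assumption branches (ii)(a)/(b) and verifying that the generalized inverse may be replaced by a genuine inverse on the appropriate high-probability event.
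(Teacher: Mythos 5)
Your treatment of the main term (covering grid, truncation at $T_n\asymp \sqrt{n/\log n}/(\zeta_{K,n}\lambda_{K,n})$, Freedman/Bernstein for the bounded part, Markov plus Assumption \ref{resid reg}(iii) for the tail, with condition (i) calibrating the truncation level) and of the remainder term in the weakly dependent branch is essentially the paper's argument, and the arithmetic checks out. The gap is in how you dispose of the remainder term under condition (ii)(a). You peel off $\widetilde{b}_w^K(x)'\bigl[(\widetilde{B}_w'\widetilde{B}_w/n)^{-}-I_K\bigr](\widetilde{B}_w'\epsilon/n)$ in \emph{both} branches and claim that in the i.i.d. case the needed rate $\Vert(\widetilde{B}_w'\widetilde{B}_w/n)-I_K\Vert=O_p(\sqrt{(\log n)/K})$ "follows from Lemma \ref{Bconvi.i.d.} together with condition (i)." It does not in general. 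Lemma \ref{Bconvi.i.d.} gives $O_p(\zeta_{K,n}\lambda_{K,n}\sqrt{(\log K)/n})$, so your remainder is of order $\zeta_{K,n}^2\lambda_{K,n}^2\sqrt{K\log K}/n$, and matching the target $\zeta_{K,n}\lambda_{K,n}\sqrt{(\log n)/n}$ requires roughly $\zeta_{K,n}\lambda_{K,n}\sqrt{K}\lesssim\sqrt{n}$. Condition (i) only gives $\zeta_{K,n}\lambda_{K,n}\lesssim (n/\log n)^{\delta/(2(2+\delta))}$; for $\zeta_{K,n}\lambda_{K,n}\asymp\sqrt K$ this is $K\lesssim (n/\log n)^{\delta/(2+\delta)}$, which implies $K\lesssim\sqrt n$ only when $\delta\le 2$. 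For $\delta>2$ (a perfectly admissible case under Assumptions \ref{resid reg}(iii) and \ref{a-gram}(a)), condition (i) and Assumption \ref{a-gram}(a) allow $K$ well beyond $\sqrt n$, and your remainder bound then exceeds the claimed rate. In effect you have proved the i.i.d. branch only under the extra hypothesis (ii)(b), which defeats the purpose of listing (ii)(a) as a weaker alternative.

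The paper avoids this by \emph{not} separating the remainder in the i.i.d. case: it keeps the inverse Gram matrix inside the summand, setting $G_{i,n}(x_n)=\widetilde{b}_w^K(x_n)'(\widetilde{B}_w'\widetilde{B}_w/n)^{-}\widetilde{b}_w^K(X_i)\{\mathcal{A}_n\}$, and applies Bernstein's inequality to $\frac1n\sum_i G_{i,n}(x_n)\epsilon_{1,i,n}$ \emph{conditionally on} $X_1,\ldots,X_n$. The conditional variance collapses to $n^{-1}\widetilde{b}_w^K(x_n)'(\widetilde{B}_w'\widetilde{B}_w/n)^{-1}\widetilde{b}_w^K(x_n)\lesssim\zeta_{K,n}^2\lambda_{K,n}^2/n$ on the event $\Vert(\widetilde{B}_w'\widetilde{B}_w/n)^{-1}\Vert\le 2$, so only $\Vert(\widetilde{B}_w'\widetilde{B}_w/n)-I_K\Vert=o_p(1)$ is needed — no rate. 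To repair your proof, replace the add-and-subtract-$I_K$ step in the i.i.d. branch with this conditional argument (or restrict your claim for branch (ii)(a) to $\delta\le 2$, which would be a weaker lemma than stated).
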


\begin{remark}
Weak dependence of the regressor process $\{X_{i}\}$ is implicitly captured
by the speed of convergence of $\Vert (\widetilde{B}_{w}^{\prime }\widetilde{%
B}_{w}/n)-I_{K}\Vert$. If $\{X_{i}\}$ is exponentially $\beta $-mixing
(respectively algebraically $\beta $-mixing at rate $\gamma $), condition
(ii)(b) in Lemma \ref{reglem gen} is satisfied provided $\zeta _{K,n}\lambda
_{K,n}\sqrt{(K\log n)/n}=O(1)$ (respectively $\zeta _{K,n}\lambda _{K,n}%
\sqrt{K/n^{\gamma /(1+\gamma )}}=O(1)$); see Lemma \ref{Bconvbeta}.
\end{remark}

\textbf{General bound on the sup-norm bias term}. With our sharp bound on
the variance term $\Vert \widehat{h}-\widetilde{h}\Vert _{\infty ,w}$ in
hand it remains to provide a calculation for the bias term $\Vert h_{0}-%
\widetilde{h}\Vert _{\infty ,w}$. Let $P_{K,w,n}$ be the (empirical)
projection operator onto $B_{K,w}\equiv clsp\{b_{K1}w_{n},\ldots
,b_{KK}w_{n}\}$, namely
\begin{equation}
P_{K,w,n}h(x)=b_{w}^{K}(x)^{\prime }\left( \frac{B_{w}^{\prime }B_{w}}{n}%
\right) ^{-}\frac{1}{n}\sum_{i=1}^{n}b_{w}^{K}(X_{i})h(X_{i})=\widetilde{b}%
_{w}^{K}(x)^{\prime }(\widetilde{B}_{w}^{\prime }\widetilde{B}_{w})^{-}%
\widetilde{B}_{w}^{\prime }H  \label{e-pkwn def}
\end{equation}%
where $H=(h(X_{1}),\ldots ,h(X_{n}))^{\prime }$. $P_{K,w,n}$ is a well
defined operator: if $L_{w,n}^{2}(X)$ denotes the space of functions with
norm $\Vert \cdot \Vert _{w,n}$ where $\Vert f\Vert _{w,n}^{2}=\frac{1}{n}%
\sum_{i=1}^{n}f(X_{i})^{2}w_{n}(X_{i})$, then $P_{K,w,n}:L_{w,n}^{2}(X)%
\rightarrow L_{w,n}^{2}(X)$ is an orthogonal projection onto $B_{K,w}$
whenever $B_{w}^{\prime }B_{w}$ is invertible (which it is wpa1 under
Assumptions \ref{sieve reg gen}(iii) and \ref{a-gram}).

One way to control the bias term $\Vert h_{0}-\widetilde{h}\Vert _{\infty
,w} $ is to bound $P_{K,w,n}$ in sup norm. Note that $\widetilde{h}%
=P_{K,w,n}h_{0}$. Let $L_{w,n}^{\infty }(X)$ denote the space of functions
for which $\sup_{x}|f(x)w_{n}(x)|<\infty $ and let
\begin{equation*}
\Vert P_{K,w,n}\Vert _{\infty ,w}=\sup_{h\in L_{w,n}^{\infty }(X):\Vert
h\Vert _{\infty ,w}\neq 0}\frac{\Vert P_{K,w,n}h\Vert _{\infty ,w}}{\Vert
h\Vert _{\infty ,w}}
\end{equation*}%
denote the (weighted sup) operator norm of $P_{K,w,n}$. The following crude
bound on $\Vert P_{K,w,n}\Vert _{\infty }$ is valid for general linear sieve
bases and weakly dependent regressors.

\begin{remark}
\label{Pkw bound} Let Assumptions \ref{sieve reg gen}(iii) and \ref{a-gram}
hold. Then: $\Vert P_{K,w,n}\Vert _{\infty }\leq \sqrt{2}\zeta _{K,n}\lambda
_{K,n}$ wpa1.
\end{remark}

More refined bounds on $\Vert P_{K,w,n}\Vert _{\infty ,w}$ may be derived
for particular linear sieves with local properties, such as splines and
wavelets stated below. These more refined bounds, together with the
following Lemma, lead to the optimal uniform convergence rates of series LS
estimators with the particular linear sieves.

\begin{lemma}
\label{rate gen} Let the assumptions and conditions of Lemma \ref{reglem gen}
hold. Then: (1)
\begin{equation*}
\Vert \widehat{h}-{h}_{0}\Vert _{\infty }\leq O_{p}\left( \zeta
_{K,n}\lambda _{K,n}\sqrt{(\log n)/{n}}\right) +\left( 1+\Vert
P_{K,w,n}\Vert _{\infty ,w}\right) \inf_{h\in B_{K,w}}\Vert h_{0}-h\Vert
_{\infty ,w}.
\end{equation*}%
(2) Further, if the linear sieve satisfies $\zeta _{K,n}\lambda
_{K,n}\lesssim \sqrt{K}$ and $\Vert P_{K,w,n}\Vert _{\infty ,w}=O_{p}(1)$,
then
\begin{equation*}
\Vert \widehat{h}-{h}_{0}\Vert _{\infty }\leq O_{p}\left( \sqrt{(K\log n)/{n}%
}+\inf_{h\in B_{K,w}}\Vert h_{0}-h\Vert _{\infty ,w}\right) .
\end{equation*}
\end{lemma}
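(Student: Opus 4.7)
The plan is to combine the sharp variance bound already established in Lemma \ref{reglem gen} with a standard quasi-best-approximation argument for the projection $P_{K,w,n}$ onto $B_{K,w}$. I would begin with the bias-variance decomposition
\[
\|\widehat h - h_0\|_\infty \leq \|\widehat h - \widetilde h\|_\infty + \|\widetilde h - h_0\|_{\infty,w},
\]
using that $\widehat h$ and $\widetilde h$ both have support in $\mathcal D_n$, so the first term on the right coincides with $\|\widehat h - \widetilde h\|_{\infty,w}$ (and the $\|\cdot\|_\infty$ on the left is understood in its weighted form on $\mathcal D_n$; when $\mathcal D_n=\mathcal X$ this is automatic). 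Lemma \ref{reglem gen} directly delivers $\|\widehat h - \widetilde h\|_\infty = O_p(\zeta_{K,n}\lambda_{K,n}\sqrt{(\log n)/n})$, which is the variance term appearing in the conclusion of part (1).

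For the bias term I would exploit the algebraic structure of the projection. Under Assumptions \ref{sieve reg gen}(iii) and \ref{a-gram}, $B_w'B_w$ is invertible wpa1, so on that event $P_{K,w,n}$ defined in (\ref{e-pkwn def}) is a genuine idempotent projection onto $B_{K,w}$, and comparing (\ref{htilde0}) with (\ref{e-pkwn def}) gives $\widetilde h = P_{K,w,n} h_0$. Since $P_{K,w,n}h = h$ for every $h \in B_{K,w}$, I can write, for any such $h$,
\[
h_0 - \widetilde h = (h_0 - h) - P_{K,w,n}(h_0 - h) = (I - P_{K,w,n})(h_0 - h).
\]
Applying the triangle inequality and the definition of the operator norm $\|P_{K,w,n}\|_{\infty,w}$ yields
\[
\|h_0 - \widetilde h\|_{\infty,w} \leq \bigl(1+\|P_{K,w,n}\|_{\infty,w}\bigr)\|h_0 - h\|_{\infty,w},
\]
and taking the infimum over $h \in B_{K,w}$ produces the bias term in part (1).

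Part (2) is then an immediate specialization: the hypothesis $\zeta_{K,n}\lambda_{K,n}\lesssim\sqrt K$ converts the variance rate $\zeta_{K,n}\lambda_{K,n}\sqrt{(\log n)/n}$ into $\sqrt{(K\log n)/n}$, while $\|P_{K,w,n}\|_{\infty,w}=O_p(1)$ absorbs the factor $1+\|P_{K,w,n}\|_{\infty,w}$ into the $O_p(\cdot)$ on the bias term.

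There is no substantive obstacle in this argument: it is essentially a textbook bias-variance plus quasi-optimality decomposition, with all the work already done in Lemma \ref{reglem gen} (which in turn relied on the new exponential inequality of the paper). The only care required is to condition on the wpa1 event where $B_w'B_w$ is invertible, so that $P_{K,w,n}$ genuinely acts as the identity on $B_{K,w}$ and $\widetilde h = P_{K,w,n}h_0$ holds as a pointwise identity; outside this event the contribution is negligible in probability and is absorbed into the $O_p(\cdot)$ terms.
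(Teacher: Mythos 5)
Your proposal is correct and matches the paper's own proof essentially verbatim: the paper also reduces part (1) to bounding the bias term via $\widetilde h - h_0 = P_{K,w,n}(h_0-h) + (h - h_0)$ for arbitrary $h \in B_{K,w}$, applies the triangle inequality and the operator norm of $P_{K,w,n}$, and takes the infimum, with part (2) following by direct substitution. Your added care about conditioning on the wpa1 event where $B_w'B_w$ is invertible is a sensible (implicit in the paper) refinement, not a departure.
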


\subsection{Attainability of optimal uniform convergence rates}

We now turn to attainability of the optimal uniform rate of \cite{Stone1982}
by specific series LS estimators. To fix ideas, in what follows we take $%
\mathcal{D}_{n}=\mathcal{D}=[0,1]^{d}\subseteq \mathcal{X}$ for all $n$,
whence $\Vert f\Vert _{\infty ,w}=\sup_{x\in \mathcal{D}}|f(x)|$. Let $%
\Lambda ^{p}([0,1]^{d})$ denote a H\"{o}lder space of smoothness $p$ on the
domain $[0,1]^{d}$ (see, e.g. \cite{Chen2007} for definition). Let $%
\mbox
{BSpl}(K,[0,1]^{d},{\gamma })$ denote a B-spline sieve of degree $\gamma $
and dimension $K$ on the domain $[0,1]^{d}$, and let $\mbox {Wav}%
(K,[0,1]^{d},{\gamma })$ denote a Wavelet sieve basis of regularity $\gamma $
and dimension $K$ on the domain $[0,1]^{d}$ (see Section \ref{sieve def} for
details on construction of these sieve bases). Because our bases have been
constructed to have support $[0,1]^{d}$ we trivially have $%
b_{Kk}(x)=b_{Kk}(x)w_{n}(x)$ for all $k=1,\ldots ,K$ and all $n$ and $K$.
Recall $B_{K}\equiv clsp\{b_{K1},\ldots ,b_{KK}\}$ is the linear sieve space.

The following assumptions on the conditional mean function and the sieve
basis functions are sufficient for attaining the optimal uniform convergence
rate.

\setcounter{assumption}{0}

\begin{assumption}[continued]
(iii) $\mathcal{D}_{n}=\mathcal{D}=[0,1]^{d}\subseteq \mathcal{X}$ for all $%
n $, (iv) the unconditional density of $X_{i}$ is uniformly bounded away
from zero and infinity on ${\mathcal{D}}$.
\end{assumption}

\setcounter{assumption}{5}

\begin{assumption}
\label{parameter regression} The restriction of $h_{0}$ to $[0,1]^{d}$
belongs to $\Lambda ^{p}([0,1]^{d})$ for some $p>0$.
\end{assumption}

\begin{assumption}
\label{b sieve} The sieve $B_{K}$ is BSpl$(K,[0,1]^{d},{\gamma })$ or Wav$%
(K,[0,1]^{d},{\gamma })$ with $\gamma > \max \{p,1\}$.
\end{assumption}

Assumptions \ref{X reg} and \ref{parameter regression} are standard
regularity conditions used in derivation of optimal uniform convergence
rates \citep{Stone1982,Tsybakov2009}. Assumption \ref{X reg}(iii) implies
Assumption \ref{a-wdelta}. Assumptions \ref{X reg} and \ref{b sieve} imply
Assumption \ref{sieve reg gen} with $\zeta _{K,n}\lesssim \sqrt{K}$ and $%
\lambda _{K,n}\lesssim 1$.

Let $h_{0,K}^{\ast }\in B_{K}$ solve $\inf_{h\in B_{K}}\Vert h_{0}-h\Vert
_{\infty ,w}$. Assumptions \ref{X reg}, \ref{parameter regression} and \ref%
{b sieve} imply that $\Vert h_{0}-h_{0,K}^{\ast }\Vert _{\infty ,w}\lesssim
K^{-p/d}$ (see, e.g. \cite{DeVoreLorentz}, \cite{Huang1998}, \cite{Chen2007}%
). Previously \cite{Huang2003} showed that $\Vert P_{K,w,n}\Vert _{\infty
,w}\lesssim 1$ wpa1 for spline bases with i.i.d. data. In the proof of
Theorem \ref{sup norm rate regression} we extend his result to allow for
weakly dependent regressors. In addition, Theorem \ref{c-pstable} in Section %
\ref{sec-stable} shows that $\Vert P_{K,w,n}\Vert _{\infty ,w}\lesssim 1$
wpa1 for wavelet bases with i.i.d. or weakly dependent regressors.

\begin{theorem}
\label{sup norm rate regression} Let Assumptions \ref{X reg}, \ref{resid reg}%
(i)(ii)(iii) (with $\delta \geq d/p$), \ref{parameter regression} and \ref{b
sieve} hold. If $K\asymp (n/\log n)^{d/(2p+d)}$, then
\begin{equation*}
\Vert \widehat{h}-h_{0}\Vert _{\infty ,w}=O_{p}((n/\log n)^{-p/(2p+d)})
\end{equation*}%
provided that either (a), (b), or (c) is satisfied:

\begin{itemize}
\item[(a)] $\{(X_{i},Y_{i})\}_{i=1}^{n}$ is i.i.d.;

\item[(b)] $\{X_{i}\}_{i=1}^{n}$ is exponentially $\beta $-mixing and $d<2p$;

\item[(c)] $\{X_{i}\}_{i=1}^{n}$ is algebraically $\beta $-mixing at rate $%
\gamma $ and $(2+\gamma )d<2\gamma p$.
\end{itemize}
\end{theorem}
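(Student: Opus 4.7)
The plan is to reduce the theorem to an application of Lemma \ref{rate gen}(2), verify its preconditions, and then plug in the stated rate for $K$. Since $\mathcal{D}_n = [0,1]^d$ is compact, Assumption \ref{a-wdelta} holds trivially with $w_n(x) = 1$ on $\mathcal{D}$. By Assumption \ref{b sieve} the sieve satisfies $\zeta_{K,n}\lesssim \sqrt{K}$, and by Assumption \ref{X reg}(iii)(iv) the density of $X_i$ is bounded above and below on $[0,1]^d$, so that $\lambda_{K,n}\lesssim 1$. Consequently $\zeta_{K,n}\lambda_{K,n}\lesssim \sqrt{K}$, which is the structural hypothesis required by Lemma \ref{rate gen}(2).

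The next step is to check that $\|P_{K,w,n}\|_{\infty,w} = O_p(1)$. For wavelet sieves this is exactly the content of Theorem \ref{c-pstable}, which applies for both i.i.d. and $\beta$-mixing regressors. For B-spline sieves, this follows from the argument of \cite{Huang2003} in case (a), and in cases (b)--(c) it is obtained by extending his argument using Lemma \ref{Bconvbeta} in place of his i.i.d. concentration step for the empirical Gram matrix. Under Assumptions \ref{X reg}, \ref{parameter regression} and \ref{b sieve}, the Jackson-type inequalities for splines and wavelets (\cite{DeVoreLorentz}, \cite{Huang1998}) give $\inf_{h\in B_{K,w}}\|h_0-h\|_{\infty,w}\lesssim K^{-p/d}$.

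It remains to verify the preconditions of Lemma \ref{reglem gen} under each of (a)--(c). Condition (i) with $\delta = d/p$ becomes $K^{(2p+d)/(2d)}\lesssim \sqrt{n/\log n}$, which is satisfied with equality by the choice $K\asymp (n/\log n)^{d/(2p+d)}$. Condition (ii)(a) is automatic in case (a). In cases (b) and (c), Lemma \ref{Bconvbeta} gives $\|(\widetilde B_w'\widetilde B_w/n)-I_K\|=O_p(\sqrt{Kq(\log K)/n})$, so that (ii)(b) reduces to $K^2q\log K \lesssim n\log n$. In the exponentially $\beta$-mixing case one may take $q\asymp \log n$, reducing the condition to $K^2\lesssim n$ (up to logs), i.e.\ $2d/(2p+d)\le 1$, which is $d\le 2p$; the strict inequality $d<2p$ absorbs the logarithmic slack. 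In the algebraically $\beta$-mixing case the choice $q\asymp n^{1/(\gamma+1)}$ makes $\beta(q)n/q=o(1)$, and (ii)(b) reduces to $2d/(2p+d)+1/(\gamma+1)\le 1$, which after rearrangement is exactly $(2+\gamma)d\le 2\gamma p$, again strict to absorb logarithmic slack.

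Combining, Lemma \ref{rate gen}(2) yields
\begin{equation*}
\|\widehat h - h_0\|_{\infty,w} = O_p\!\left(\sqrt{(K\log n)/n} + K^{-p/d}\right),
\end{equation*}
and substituting $K\asymp (n/\log n)^{d/(2p+d)}$ balances the two terms at $(n/\log n)^{-p/(2p+d)}$. The only genuinely delicate step is the verification of condition (ii)(b) in the algebraic $\beta$-mixing case (c): the choice of block length $q$ must trade off the mixing rate $\gamma$ against the sieve dimension $K$, and it is precisely this balance that forces the sharper restriction $(2+\gamma)d<2\gamma p$ compared with the exponential mixing case.
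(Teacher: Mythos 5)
Your proposal is correct and follows essentially the same route as the paper's own proof: it controls the variance term via Lemma \ref{reglem gen} (verifying condition (i) from $\delta\geq d/p$ and condition (ii) from Lemma \ref{Bconvbeta} with $q\asymp\log n$ or $q\asymp n^{1/(1+\gamma)}$, which is exactly where the restrictions $d<2p$ and $(2+\gamma)d<2\gamma p$ arise), and controls the bias term via $\Vert P_{K,w,n}\Vert_{\infty,w}=O_{p}(1)$ from Theorem \ref{c-pstable} for wavelets and the extension of \cite{Huang2003} for splines, together with the $K^{-p/d}$ approximation bound. Packaging the two pieces through Lemma \ref{rate gen}(2) rather than assembling them directly is a purely cosmetic difference.
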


Theorem \ref{sup norm rate regression} states that the optimal uniform
convergence rates of \cite{Stone1982} are achieved by spline and wavelet
series LS estimators with i.i.d. data whenever $\delta \geq d/p$. If the
regressors are exponentially $\beta $-mixing the optimal rate of convergence
is achieved with $\delta \geq d/p$ and $d<2p$. The restrictions $\delta \geq
d/p$ and $(2+\gamma )d<2\gamma p$ for algebraically $\beta $-mixing (at a
rate $\gamma $) reduces naturally towards the exponentially $\beta $-mixing
restrictions as the dependence becomes weaker (i.e. $\gamma $ becomes
larger). In all cases, for a fixed dimension $d\geq 1$, a smoother function
(i.e. bigger $p$) means a lower value of $\delta $, and hence fatter-tailed
error terms $\epsilon _{i}$, are permitted while still obtaining the optimal
uniform convergence rate. In particular this is achieved with $\delta =d/p<2$%
.

\textbf{Discussion of closely related results}. Under Assumption \ref{X reg}
with i.i.d. data and compact $\mathcal{X}$, we can set the weight to be $%
w_{n}=1$ for all $n$. Let $P_{K}$ denote the $L^{2}(X)$ orthogonal
projection operator onto $B_{K}$, given by
\begin{equation}
P_{K}h(x)=b^{K}(x)^{\prime }\left( E[b^{K}(X_{i})b^{K}(X_{i})^{\prime
}]\right) ^{-1}E[b^{K}(X_{i})h(X_{i})]  \label{e-pkdef}
\end{equation}%
for any $h\in L^{2}(X)$, and define its $L^{\infty }$ operator norm:
\begin{equation}
\Vert P_{K}\Vert _{\infty }:=\sup_{h\in L^{\infty }(X):\Vert h\Vert _{\infty
}\neq 0}\frac{\Vert P_{K}h\Vert _{\infty }}{\Vert h\Vert _{\infty }}.
\label{e-pksupnorm}
\end{equation}%
Under i.i.d. data, Assumptions \ref{X reg}, \ref{resid reg}(i)(ii) (i.e., $%
E[\epsilon _{i}|X_{i}]=0,$ $\sup_{x}E[\left\vert \epsilon _{i}\right\vert
^{2}|X_{i}=x]<\infty $), \ref{parameter regression}, and the conditions $%
\lambda _{K,n}\lesssim 1$, $\zeta _{K,n}^{2}K/{n}=o(1)$ and $\Vert
h_{0}-h_{0,K}^{\ast }\Vert _{\infty }\lesssim K^{-p/d}$ on the series basis,
\cite{Newey1997} derived the following sup-norm convergence rates for series
LS estimators with an arbitrary basis:%
\begin{equation}
\Vert \widehat{h}-{h}_{0}\Vert _{\infty }\leq O_{p}\left( \zeta _{K,n}\sqrt{%
K/{n}}+\zeta _{K,n}K^{-p/d}\right) .  \label{newey-bound}
\end{equation}%
By Remark \ref{Pkw bound}, under the same set of mild conditions imposed in
\cite{Newey1997} (except allowing for weakly dependent regressors), the
bound (\ref{newey-bound}) can be slightly improved to
\begin{equation}
\Vert \widehat{h}-{h}_{0}\Vert _{\infty }\leq O_{p}\left( \zeta _{K,n}\sqrt{%
K/{n}}+\Vert P_{K,w,n}\Vert _{\infty ,w}K^{-p/d}\right) .
\label{sup-rate-L2}
\end{equation}%
It is clear that the general bounds in (\ref{newey-bound}) and (\ref%
{sup-rate-L2}) with an arbitrary basis are not optimal, but they are derived
under the minimal moment restriction of Assumption \ref{resid reg}(ii)
without the existence of higher-than-second moments (i.e., $\delta =0$ in
Assumption \ref{resid reg}(iii)).

Under the extra moment condition $\sup_{x}E[\left\vert \epsilon
_{i}\right\vert ^{4}|X_{i}=x]<\infty $, \cite{deJong2002} obtained the
following general bound on sup-norm rates for series LS estimators with an
arbitrary basis:%
\begin{equation}
\Vert \widehat{h}-{h}_{0}\Vert _{\infty }\leq O_{p}\left( \zeta _{K,n}\sqrt{%
(\log n)/{n}}+K^{-p/d}+\Vert P_{K}h_{0}-h_{0,K}^{\ast }\Vert _{\infty
}\right) .  \label{dejong-rate}
\end{equation}%
\cite{deJong2002} did not provide sharp bounds for $\Vert
P_{K}h_{0}-h_{0,K}^{\ast }\Vert _{\infty }$ for any particular basis, and
was therefore unable to attain the optimal convergence rate $\Vert \widehat{h%
}-h_{0}\Vert _{\infty }=O_{p}((n/\log n)^{-p/(2p+d)})$ of \cite{Stone1982}.
Note that%
\begin{equation}
\Vert P_{K}h_{0}-h_{0,K}^{\ast }\Vert _{\infty }=\Vert
P_{K}(h_{0}-h_{0,K}^{\ast })\Vert _{\infty }\leq \Vert P_{K}\Vert _{\infty
}\Vert h_{0}-h_{0,K}^{\ast }\Vert _{\infty }\lesssim \Vert P_{K}\Vert
_{\infty }K^{-p/d}.
\end{equation}%
Given the newly derived sharp bounds of $\Vert P_{K}\Vert _{\infty }\lesssim
1$ in \cite{Huang2003} for splines, in \cite{BCK2013} for the local
polynomial partition series, and in our paper (Theorem \ref{t-wavd}) for
wavelets, one could now apply \cite{deJong2002}'s result (\ref{dejong-rate})
to conclude the attainability of the optimal sup-norm rate by spline, local
polynomial partition and wavelet series LS estimators for i.i.d. data.
However, \cite{deJong2002}'s result (\ref{dejong-rate}) is proved under the
strong bounded conditional fourth moment condition of $\sup_{x}E[\left\vert
\epsilon _{i}\right\vert ^{4}|X_{i}=x]<\infty $ and the side condition $%
\zeta _{K,n}^{2}K/{n}=o(1)$.

Recently, \cite{CattaneoFarrell} proved that a local
polynomial partitioning regression estimator can attain the optimal sup-norm
rate under i.i.d. data and the conditional moment condition $%
\sup_{x}E[\left\vert \epsilon _{i}\right\vert ^{2+\delta }|X_{i}=x]<\infty $
for some $\delta \geq \max (1,d/p)$. \cite{BCK2013} show that spline,
local polynomial partition and wavelet LS estimators attain the optimal sup-norm rate
under i.i.d. data and the conditional moment condition $\sup_{x}E[\left\vert
\epsilon _{i}\right\vert ^{2+\delta }|X_{i}=x]<\infty $ for some $\delta
>d/p $.\footnote{Our result on sup-norm stability of wavelet basis is used by \cite{BCK2013} for the optimal sup-norm rate of the wavelet LS estimator.} By contrast, we require a weaker unconditional moment condition $%
E[\left\vert \epsilon _{i}\right\vert ^{2+(d/p)}]<\infty $ for spline and
wavelet LS estimators to attain the optimal uniform convergence rate,
allowing for both i.i.d. data and weakly dependent regressors.\footnote{%
Our proof strategy for the i.i.d. data case is similar to that in a draft note by \cite{ChenHuang03}, who applied \cite{Huang2003}'s sup-norm stability of spline basis to obtain the
optimal sup-norm rate for spline series LS estimator under
the condition $E[\left\vert \epsilon _{i}\right\vert ^{2+\delta }]<\infty $
for some $\delta >d/p$. The authors carelessly set $\delta =2$, but did not like
the strong condition $E[\left\vert \epsilon _{i}\right\vert ^{4}]<\infty $
and hence did not finish the paper. The note was circulated among
some colleagues and former students of the authors, and is available upon request.} It remains an
open question whether one could obtain the optimal sup-norm convergence rate
without imposing a finite higher-than-second unconditional moment of the
error term, however.

\subsection{A general sharp bound on $L^{2}$ convergence rates}

In this subsection, we present a simple but sharp upper bound on the $L^{2}$
(or root mean square) convergence rates of series LS estimators with an
arbitrary basis and weakly dependent regressors.

Recall that $B_{K,w}=clsp\{b_{K1}w_{n},\ldots ,b_{KK}w_{n}\}$ is a general
weighted linear sieve space and $\widetilde{h}=P_{K,w,n}h_{0}$ is defined in
(\ref{htilde0}). Let $h_{0,K}$ be the $L^{2}(X)$ orthogonal projection of
the conditional mean function $h_{0}$ onto $B_{K,w}$.

\begin{lemma}
\label{lem-hhat l2} Let Assumptions \ref{X reg}(i), \ref{resid reg}(i)(ii), %
\ref{sieve reg gen}(iii) and \ref{a-gram} hold. Then:
\begin{equation*}
\Vert \widehat{h}-\widetilde{h}\Vert _{L^{2}(X)}=O_{p}\left( \sqrt{K/n}%
\right) \text{ and }\Vert \widetilde{h}-h_{0}\Vert _{L^{2}(X)}=O_{p}\left(
\Vert h_{0}-h_{0,K}\Vert _{L^{2}(X)}\right) \,.
\end{equation*}
\end{lemma}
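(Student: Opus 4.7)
Proof proposal. The approach is to reduce both quantities to Euclidean norms of coefficient vectors in the orthonormalized basis $\widetilde{b}_w^K$, and then to exploit the martingale difference structure for the stochastic (variance) term and the norm equivalence from Assumption \ref{a-gram} for the deterministic (bias) term. The key fact making this reduction clean is that by construction $E[\widetilde{b}_w^K(X_i)\widetilde{b}_w^K(X_i)'] = I_K$, so any $f(x) = \widetilde{b}_w^K(x)'c$ has $\Vert f\Vert_{L^2(X)}^2 = c'c$; both $\widehat{h}-\widetilde{h}$ and the bias-type remainder lie in $B_{K,w}$ and admit such a representation.

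Variance term. I would write $\widehat{h}-\widetilde{h}=\widetilde{b}_w^K(\cdot)'\widehat{c}$ with $\widehat{c}=(\widetilde{B}_w'\widetilde{B}_w)^-\widetilde{B}_w'\epsilon$, so that $\Vert \widehat{h}-\widetilde{h}\Vert_{L^2(X)}=\Vert\widehat{c}\Vert\le \Vert(\widetilde{B}_w'\widetilde{B}_w/n)^-\Vert\cdot\Vert \widetilde{B}_w'\epsilon/n\Vert$. The first factor is $1+o_p(1)$ by Assumption \ref{a-gram}. For the second, the m.d.s.\ property of $\{\epsilon_i,\mathcal{F}_{i-1}\}$ in Assumption \ref{resid reg}(i) annihilates the cross terms in the second moment, and Assumption \ref{resid reg}(ii) together with stationarity gives $E[\Vert\widetilde{B}_w'\epsilon/n\Vert^2]\le\bar\sigma^2 E[\Vert\widetilde{b}_w^K(X_1)\Vert^2]/n=\bar\sigma^2 K/n$, where the final equality uses the trace identity $E[\Vert\widetilde{b}_w^K(X_1)\Vert^2]=\mathrm{tr}(I_K)=K$. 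Markov's inequality then yields the claimed $O_p(\sqrt{K/n})$ rate.

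Bias term. Let $r=h_0-h_{0,K}$. Because $h_{0,K}\in B_{K,w}$ and $B_w'B_w$ is invertible wpa1 (by Assumptions \ref{sieve reg gen}(iii) and \ref{a-gram}), one has $P_{K,w,n}h_{0,K}=h_{0,K}$ wpa1; hence $\widetilde{h}-h_0=P_{K,w,n}r-r$, and the triangle inequality leaves only $\Vert P_{K,w,n}r\Vert_{L^2(X)}$ to control. Setting $\widetilde{c}=(\widetilde{B}_w'\widetilde{B}_w)^-\widetilde{B}_w'R$ with $R=(r(X_1),\ldots,r(X_n))'$, the same key reduction gives $\Vert P_{K,w,n}r\Vert_{L^2(X)}^2=\widetilde{c}'\widetilde{c}\le (1+o_p(1))\,\widetilde{c}'(\widetilde{B}_w'\widetilde{B}_w/n)\widetilde{c}=(1+o_p(1))\Vert P_{K,w,n}r\Vert_{w,n}^2$ by Assumption \ref{a-gram}. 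The contractivity $\Vert P_{K,w,n}r\Vert_{w,n}\le\Vert r\Vert_{w,n}$ of the empirical projection, combined with Markov applied to $E[\Vert r\Vert_{w,n}^2]\le \Vert r\Vert_{L^2(X)}^2$ (using $w_n\le 1$ and stationarity), then gives $\Vert P_{K,w,n}r\Vert_{L^2(X)}=O_p(\Vert h_0-h_{0,K}\Vert_{L^2(X)})$.

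Main obstacle. The variance step is essentially routine once the m.d.s.\ structure is used. The only real subtlety is in the bias step, where $P_{K,w,n}$ is a contraction only in the empirical norm $\Vert\cdot\Vert_{w,n}$, so one must pass back to the $L^2(X)$ norm at the end; this passage is precisely what Assumption \ref{a-gram} delivers, via $\widetilde{B}_w'\widetilde{B}_w/n=I_K+o_p(1)$, and explains why that assumption appears in the hypothesis despite there being no explicit dependence condition on $\{X_i\}$.
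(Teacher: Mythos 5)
Your proposal is correct and follows essentially the same route as the paper's proof: the variance term is bounded by $\Vert(\widetilde B_w'\widetilde B_w/n)^-\Vert\cdot\Vert\widetilde B_w'e/n\Vert$ with the martingale-difference structure plus Chebyshev/Markov delivering $\Vert\widetilde B_w'e/n\Vert=O_p(\sqrt{K/n})$, and the bias term is handled by writing $\widetilde h-h_{0,K}=P_{K,w,n}(h_0-h_{0,K})$, passing from the $L^2(X)$ norm to the empirical norm via Assumption \ref{a-gram}, using contractivity of the empirical projection, and applying Markov to return to $\Vert h_0-h_{0,K}\Vert_{L^2(X)}$. Your explicit reduction to coefficient vectors via $E[\widetilde b_w^K(X_i)\widetilde b_w^K(X_i)']=I_K$ is exactly what the paper uses implicitly, so there is nothing substantive to add.
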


Lemmas \ref{lem-hhat l2}, \ref{Bconvi.i.d.} and \ref{Bconvbeta} immediately
imply the following result.

\begin{remark}
\label{L2-rate} Let Assumptions \ref{X reg}(i), \ref{resid reg}(i)(ii) and $%
\lambda _{K,n}\lesssim 1$ hold. Then: (1)%
\begin{equation}
\Vert \widehat{h}-h_{0}\Vert _{L^{2}(X)}=O_{p}\left( \sqrt{K/n}+\Vert
h_{0}-h_{0,K}\Vert _{L^{2}(X)}\right) ,  \label{L2rate}
\end{equation}%
provided that either (1.a), (1.b) or (1.c) is satisfied:

\begin{itemize}
\item[(1.a)] $\{X_{i}\}_{i=1}^{n}$ is i.i.d., and $\zeta _{K,n}\sqrt{{(\log
K)}/{n}}=o(1)$;

\item[(1.b)] $\{X_{i}\}_{i=1}^{n}$ is exponentially $\beta $-mixing, and $%
\zeta _{K,n}\sqrt{(\log n)^{2}/{n}}=o(1)$;

\item[(1.c)] $\{X_{i}\}_{i=1}^{n}$ is algebraically $\beta $-mixing at rate $%
\gamma $, and $\zeta _{K,n}\sqrt{{(\log K)}/{n}^{\gamma /(1+\gamma )}}=o(1).$
\end{itemize}

(2) Further, if Assumptions \ref{X reg} and \ref{parameter regression} hold
and $K\asymp (n/\log n)^{d/(2p+d)}$, then%
\begin{equation*}
\Vert \widehat{h}-h_{0}\Vert _{L^{2}(X)}=O_{p}(n^{-p/(2p+d)})
\end{equation*}%
provided that either (2.a) or (2.b) is satisfied:

\begin{itemize}
\item[(2.a)] $\{X_{i}\}_{i=1}^{n}$ is i.i.d. or exponentially $\beta $%
-mixing: with $p>0$ for trigonometric polynomial, spline or wavelet series,
and $p>d/2$ for power series;

\item[(2.b)] $\{X_{i}\}_{i=1}^{n}$ is algebraically $\beta $-mixing at rate $%
\gamma $: with $p>d/(2\gamma )$ for trigonometric polynomial, spline or
wavelet series; and $p>d(2+\gamma )/(2\gamma )$ for power series.
\end{itemize}
\end{remark}

With i.i.d. data under the condition $\lambda _{K,n}\lesssim 1$, \cite%
{Newey1997} derived the same sharp $L^{2}$ rate in (\ref{L2rate}) for series
LS estimators under the restriction $\zeta _{K,n}^{2}K/{n}=o(1)$. \cite%
{Huang2003b} showed that spline LS estimator has the same $L^{2}$ rate under
the much weaker condition $K(\log K)/n=o(1)$. Both \cite{BCK2013} and our Remark \ref{L2-rate} part (1.a)
extend \cite{Huang2003b}'s weakened condition to other bases satisfying $%
\zeta _{K,n}\lesssim \sqrt{K}$ (such as trigonometric polynomial and
wavelet) for series LS regression with i.i.d. data. In addition, Remark \ref{L2-rate} part (1.b) shows that the mild condition $K(\log
K)^2 /n=o(1)$ suffices for trigonometric polynomial, wavelet, spline and other
bases satisfying $\zeta _{K,n}\lesssim \sqrt{K}$ for exponentially $\beta $%
-mixing regressors.

With weakly-dependent data, \cite{ChenShen} derived $L^{2}$ rates for LS
regression using various linear or nonlinear sieves with beta-mixing
sequence under higher-than-second moment restriction (see Proposition 5.1 in
\cite{ChenShen}). \cite{HuangYang} and others derived the optimal $L^{2}$
rate for spline LS regression with strongly mixing sequence assuming a
uniformly bounded higher-than-second conditional moment. Thanks to our Lemma %
\ref{Bconvbeta}, we are able to show that series LS estimators with
arbitrary bases attain the optimal $L^{2}$ convergence rate with beta-mixing
regressors under a uniformly bounded second conditional moment condition on
the residuals. Our result should be very useful to nonparametric series
regression for financial time series data with heavy-tailed errors.

\section{Inference on possibly nonlinear functionals}

\label{inference sec}

We now study inference on possibly nonlinear functionals $f:L^{2}(X)\cap
L^{\infty }(X)\rightarrow \mathbb{R}$ of the regression function $h_{0}$.
Examples of functionals include, but are not limited to, the pointwise
evaluation functional, the partial mean functional, and consumer surplus
(see, e.g., \cite{Newey1997} for examples). The functional $f(h_{0})$ may be
estimated using the plug-in series LS estimator $f(\widehat{h})$, for which
we now establish feasible limit theory.

As with \cite{Newey1997} and \cite{ChenLiaoSun}, our results allow
researchers to perform inference on nonlinear functionals $f$ of $h_{0}$
without needing to know whether or not $f(h_{0})$ is regular (i.e., $\sqrt{n}
$-estimable). However, there is already a large literature on the $\sqrt{n}$%
-asymptotic normality and the consistent variance estimation for series
estimators of regular functionals of conditional mean functions with weakly
dependent data (see, e.g., \cite{ChenShen}, \cite{Chen2007}, \cite{LiRacine}%
). To save space and to illustrate the usefulness of our new sup-norm
convergence rate results, we focus on asymptotic normality of $f(\widehat{h}%
) $ and the corresponding sieve t statistic when the functional is irregular
(i.e., slower than $\sqrt{n}$-estimable) in this section.

We borrow some notation and definitions from \cite{ChenLiaoSun}. Denote the
pathwise derivative of $f$ at $h_{0}$ in the direction $v\in \mathcal{V}%
:=(L^{2}(X)-\{h_{0}\})$ by
\begin{equation}
\frac{\partial f(h_{0})}{\partial h}[v]:=\lim_{\tau \rightarrow 0^{+}}\frac{%
f(h_{0}+\tau v)}{\tau }
\end{equation}%
and assume it is linear. Let $v_{K}^{\ast }\in \mathcal{V}%
_{K}:=(B_{K,w}-\{h_{0,K}\})$ be the sieve Riesz representer of $\frac{%
\partial f(h_{0})}{\partial h}[\cdot ]$ on $V_{K}$, i.e. $v_{K}^{\ast }$ is
the unique element of $\mathcal{V}_{K}$ such that
\begin{equation}
\frac{\partial f(h_{0})}{\partial h}[v]=E[v_{K}^{\ast }(X_{i})v(X_{i})]\quad
\text{for all\quad }v\in \mathcal{V}_{K}.
\end{equation}%
It is straightforward to verify that
\begin{equation}
v_{K}^{\ast }(\cdot )=b_{w}^{K}(\cdot )^{\prime }\left(
E[b^{K}_w(X_{i})b^{K}_w(X_{i})^{\prime }]\right) ^{-1}\frac{\partial f(h_{0})%
}{\partial h}[b_{w}^{K}]=\widetilde{b}_{w}^{K}(\cdot )^{\prime }\frac{%
\partial f(h_{0})}{\partial h}[\widetilde{b}_{w}^{K}]
\end{equation}%
where $\frac{\partial f(h_{0})}{\partial h}[b_{w}^{K}]$ is understood to be
the vector formed by evaluating $\frac{\partial f(h_{0})}{\partial h}[\cdot
] $ at each element of $b_{w}^{K}(\cdot )$. Let $\Vert v_{K}^{\ast }\Vert
_{L^{2}(X)}^{2}=E[v_{K}^{\ast }(X_{i})^{2}]$. It is clear that $\Vert
v_{K}^{\ast }\Vert _{L^{2}(X)}^{2}=(\frac{\partial f(h_{0})}{\partial h}[%
\widetilde{b}_{w}^{K}])^{\prime }(\frac{\partial f(h_{0})}{\partial h}[%
\widetilde{b}_{w}^{K}])$.

Following \cite{ChenLiaoSun}, we say that $f$ is a regular (or $L^2$-norm
bounded) functional if $\Vert v_{K}^{\ast }\Vert _{L^{2}(X)}\nearrow \Vert
v^{\ast }\Vert _{L^{2}(X)}<\infty $ where $v^{\ast }\in \mathcal{V}$ is the
unique solution to
\begin{equation*}
\frac{\partial f(h_{0})}{\partial h}[v]=E[v^{\ast }(X_{i})v(X_{i})]\quad
\text{for all\quad }v\in \mathcal{V}.
\end{equation*}
We say that $f$ is an irregular (or $L^2$-norm unbounded) functional if $%
\Vert v_{K}^{\ast }\Vert _{L^{2}(X)}\nearrow +\infty $. Note that a
functional could be irregular but still sup-norm bounded (see Remark \ref%
{rmk-snb} below).

Given the martingale difference errors (Assumption \ref{resid reg}(i)), we
can define the sieve variance associated with $f(\widehat{h})$ as $%
V_{K}:=\Vert v_{K}^{\ast }\Vert _{sd}^{2}:=E[(\epsilon _{i}v_{K}^{\ast
}(X_{i}))^{2}]$. It is clear that%
\begin{eqnarray*}
V_{K} &=&\left( \frac{\partial f(h_{0})}{\partial h}[b_{w}^{K}]\right)
^{\prime }\left( E[b^{K}_w(X_{i})b^{K}_w(X_{i})^{\prime }]\right)
^{-1}E[\epsilon _{i}^{2}b^{K}_w(X_{i})b^{K}_w(X_{i})^{\prime }]\left(
E[b^{K}_w(X_{i})b^{K}_w(X_{i})^{\prime }]\right) ^{-1}\left( \frac{\partial
f(h_{0})}{\partial h}[b_{w}^{K}]\right) \\
&=&\left( \frac{\partial f(h_{0})}{\partial h}[\widetilde{b}_{w}^{K}]\right)
^{\prime }E[\epsilon _{i}^{2}\widetilde{b}_{w}^{K}(X_{i})\widetilde{b}%
_{w}^{K}(X_{i})^{\prime }]\left( \frac{\partial f(h_{0})}{\partial h}[%
\widetilde{b}_{w}^{K}]\right) .
\end{eqnarray*}%
The sieve variance $V_{K}=\Vert v_{K}^{\ast }\Vert _{sd}^{2}$ is estimated
with the simple plug-in estimator $\widehat{V}_{K}=\widehat{\Vert
v_{K}^{\ast }\Vert }_{sd}^{2}$, where
\begin{equation}
\begin{array}{rcl}
\widehat{\Vert v_{K}^{\ast }\Vert }_{sd}^{2} & = & \displaystyle\frac{1}{n}%
\sum_{i=1}^{n}\widehat{v}_{K}^{\ast }(X_{i})^{2}(Y_{i}-\widehat{h}%
(X_{i}))^{2} \\
\widehat{v}_{K}^{\ast }(X_{i}) & = & b_{w}^{K}(X_{i})^{\prime
}(B_{w}^{\prime }B_{w}/n)^{-}\frac{\partial f(\widehat{h})}{\partial h}%
[b_{w}^{K}]\,.%
\end{array}%
\end{equation}

We first introduce a slight variant of Assumption \ref{resid reg}(ii).

\setcounter{assumption}{1}

\begin{assumption}
(iv) $\inf_{x \in \mathcal{X}}E[\epsilon _{i}^{2}|X_i = x] > 0$, (v) $%
\sup_{x\in \mathcal{X}}E[\epsilon _{i}^{2}\{|\epsilon _{i}|>\ell
(n)\}|X_{i}=x]\rightarrow 0$ as $n\rightarrow \infty $ for any positive
sequence $\ell :\mathbb{N}\rightarrow \mathbb{R}_{+}$ with $\ell
(n)\rightarrow \infty $ as $n\rightarrow \infty $.
\end{assumption}

Assumption \ref{resid reg}(ii) and (iv) together imply that $\Vert
v_{K}^{\ast }\Vert _{L^{2}(X)}^{2}\asymp \Vert v_{K}^{\ast }\Vert
_{sd}^{2}=V_{K}$. Assumption \ref{resid reg}(v) is a standard uniform
integrability condition, which is not needed for the asymptotic normality of
$f(\widehat{h})$ with i.i.d. data when $f$ is a regular functional (see,
e.g., \cite{Chen2007})

Before we establish the asymptotic normality of $f(\widehat{h})$ under
general weak dependence, we need an additional assumption on the joint
dependence of $X_{i}$ and $\epsilon _{i}^{2}$, since this is not captured by
the martingale difference property of $\{\epsilon _{i}\}$ (Assumption \ref%
{resid reg}(i)). Define the $K\times K$ matrices
\begin{equation}
\begin{array}{rcl}
\widehat{\Omega } & = & n^{-1}\sum_{i=1}^{n}\epsilon _{i}^{2}\widetilde{b}%
_{w}^{K}(X_{i})\widetilde{b}_{w}^{K}(X_{i})^{\prime } \\
\Omega & = & E[\epsilon _{i}^{2}\widetilde{b}_{w}^{K}(X_{i})\widetilde{b}%
_{w}^{K}(X_{i})^{\prime }]%
\end{array}%
\end{equation}

\setcounter{assumption}{7}

\begin{assumption}
\label{a-eucgce} $\Vert \widehat{\Omega }-\Omega \Vert =o_{p}(1)$.
\end{assumption}

The following Lemma is a useful technical result that is again derived using
our new exponential inequality for sums of weakly dependent random matrices.

\begin{lemma}
\label{lem-omcgce} Let Assumptions \ref{X reg}(i), \ref{resid reg}(ii)(iii),
and \ref{sieve reg gen}(iii) hold. Then Assumption \ref{a-eucgce} is
satisfied provided that either (a), (b) or (c) is satisfied:

\begin{itemize}
\item[(a)] $\{(X_{i},Y_{i})\}_{i=1}^{n}$ is i.i.d. and $(\zeta _{K,n}\lambda
_{K,n})^{(2+\delta )/\delta }\sqrt{(\log K)/n}=o(1)$;

\item[(b)] $\{(X_{i},Y_{i})\}_{i=1}^{n}$ is exponentially $\beta $-mixing
and $(\zeta _{K,n}\lambda _{K,n})^{(2+\delta )/\delta }\sqrt{(\log n)^{2}/n}%
=o(1)$;

\item[(c)] $\{(X_{i},Y_{i})\}_{i=1}^{n}$ is algebraically $\beta $-mixing at
rate $\gamma $ and $(\zeta _{K,n}\lambda _{K,n})^{(2+\delta )/\delta }\sqrt{%
(\log K)/n^{\gamma /(1+\gamma )}}=o(1)$.
\end{itemize}
\end{lemma}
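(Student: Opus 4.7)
My plan is to combine a truncation argument on $\epsilon _{i}$ with the matrix exponential inequalities developed in Section~\ref{ei sec} (Tropp's inequality for case (a), and its weakly-dependent extension for cases (b) and (c)). Fix a truncation level $M_{n}\to \infty $ to be chosen, and split $\epsilon _{i}^{2}=\epsilon _{i}^{2}\{|\epsilon _{i}|\le M_{n}\}+\epsilon _{i}^{2}\{|\epsilon _{i}|>M_{n}\}$. This induces a decomposition $\widehat{\Omega }-\Omega =D_{1}+D_{2}$, where $D_{j}=n^{-1}\sum _{i}(\xi _{i}^{(j)}-E[\xi _{i}^{(j)}])$ and $\xi _{i}^{(1)}:=\epsilon _{i}^{2}\{|\epsilon _{i}|\le M_{n}\}\widetilde{b}_{w}^{K}(X_{i})\widetilde{b}_{w}^{K}(X_{i})'$, with $\xi _{i}^{(2)}$ its complement. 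I show each piece is $o_{p}(1)$.

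For the tail term $D_{2}$, I use $\|\widetilde{b}_{w}^{K}(x)\widetilde{b}_{w}^{K}(x)'\|\le (\zeta _{K,n}\lambda _{K,n})^{2}$ together with the Markov-type bound $E[\epsilon _{i}^{2}\{|\epsilon _{i}|>M_{n}\}]\le M_{n}^{-\delta }E[|\epsilon _{i}|^{2+\delta }]$ from Assumption~\ref{resid reg}(iii) to get $E\|\xi _{i}^{(2)}\|\lesssim (\zeta _{K,n}\lambda _{K,n})^{2}M_{n}^{-\delta }$. By stationarity and Markov's inequality applied to $n^{-1}\sum _{i}\|\xi _{i}^{(2)}\|$, this gives $\|D_{2}\|=o_{p}(1)$ provided $(\zeta _{K,n}\lambda _{K,n})^{2}M_{n}^{-\delta }=o(1)$, i.e.\ $M_{n}\gg (\zeta _{K,n}\lambda _{K,n})^{2/\delta }$.

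For the truncated term $D_{1}$, the summands are symmetric with per-summand spectral bound $R=M_{n}^{2}(\zeta _{K,n}\lambda _{K,n})^{2}$, and the matrix variance parameter is controlled by
\begin{equation*}
\|E[(\xi _{i}^{(1)})^{2}]\|\le M_{n}^{2}(\zeta _{K,n}\lambda _{K,n})^{2}\cdot \|E[\epsilon _{i}^{2}\widetilde{b}_{w}^{K}(X_{i})\widetilde{b}_{w}^{K}(X_{i})']\|\lesssim M_{n}^{2}(\zeta _{K,n}\lambda _{K,n})^{2},
\end{equation*}
using Assumption~\ref{resid reg}(ii) and the orthonormalization $E[\widetilde{b}_{w}^{K}(X_{i})\widetilde{b}_{w}^{K}(X_{i})']=I_{K}$. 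Feeding $R$ and this variance into the appropriate matrix Bernstein bound from Section~\ref{ei sec} yields $\|D_{1}\|=O_{p}\bigl(M_{n}(\zeta _{K,n}\lambda _{K,n})\sqrt{(q\log K)/n}\bigr)$ plus a higher-order $R\log K/n$ remainder, where the effective block length is $q=1$ in case (a), $q\asymp \log n$ in case (b) (exploiting geometric decay of $\beta (\cdot )$ via Lemma~\ref{Bconvbeta} and the blocking step in Section~\ref{ei sec}), and $q\asymp n^{1/(1+\gamma )}$ up to logarithms in case (c).

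The last step is a balancing one, and it is the crux: I would choose $M_{n}$ in the window $(\zeta _{K,n}\lambda _{K,n})^{2/\delta }\ll M_{n}\ll (\zeta _{K,n}\lambda _{K,n})^{-1}\sqrt{n/(q\log K)}$, which is nonempty precisely when $(\zeta _{K,n}\lambda _{K,n})^{(2+\delta )/\delta }\sqrt{(q\log K)/n}=o(1)$; substituting the $q$ associated with each regime reproduces the stated conditions (a)--(c). The main obstacle, and the reason the moment condition appears with the particular exponent $(2+\delta )/\delta $, is exactly this tension: the matrix Bernstein bound requires bounded summands and degrades quadratically in $M_{n}$, while the truncation bias requires $M_{n}$ to be large enough to absorb the heavy tail of $\epsilon _{i}$, and $E[|\epsilon _{i}|^{2+\delta }]<\infty $ is the sharp threshold at which a feasible $M_{n}$ exists.
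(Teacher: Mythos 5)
Your proof is correct and takes essentially the same route as the paper's: the paper also truncates (at level $M_n^2$ on the matrix norm $\|\epsilon_i^2\widetilde b_w^K(X_i)\widetilde b_w^K(X_i)'\|$, which is equivalent up to the factor $\zeta_{K,n}\lambda_{K,n}$ to your scalar truncation of $|\epsilon_i|$), bounds the tail piece by Markov's inequality via the $(2+\delta)$th moment, and applies Corollary \ref{troppcor} (resp.\ Corollary \ref{beta rate}) to the bounded piece. The only cosmetic difference is that the paper fixes $M_n\asymp(\zeta_{K,n}\lambda_{K,n})^{(2+\delta)/\delta}$ at the outset rather than exhibiting the balancing window for $M_n$ explicitly as you do.
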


\subsection{Asymptotic normality of $f(\widehat{h})$ for general irregular
functionals}

Let $N_{K,n}$ denote a convex neighborhood of $h_{0}$ such that $\widehat{h},%
\widetilde{h}\in N_{K,n}$ wpa1. The appropriate neighborhood will depend on
the properties of the functional under consideration. For regular and
irregular functionals we can typically take $N_{K,n}$ to be of the form $%
N_{K,n}=\{h\in B_{K,w}:{\Vert h-h_{0}\Vert _{L^{2}(X)}}\leq (\sqrt{K/n}%
+\Vert h_{0}-h_{0,K}\Vert _{L^{2}(X)})\times \log \log n\}$. However, for
sup-norm bounded nonlinear functionals (see Remark \ref{rmk-snb}) it may
suffice to take $N_{K,n}=\{h\in B_{K,w}:\Vert h-h_{0}\Vert _{L^{\infty
}(X)}<\epsilon \}$ for some fixed $\epsilon >0$, or even $N_{K,n}=L^{\infty
}(X)\cap B_{K,w}$ for sup-norm bounded linear functionals. Our sup-norm and $%
L^{2}$ rate results are clearly useful in defining an appropriate
neighborhood.

We now introduce some primitive regularity conditions on the functional $f$.

\begin{assumption}
\label{a-functional} (i) $v\mapsto \frac{\partial f(h_{0})}{\partial h}[v]$
is a linear functional;

(ii) $\sup_{h\in N_{K,n}}\sqrt{n}\Vert v_{K}^{\ast }\Vert
_{L^{2}(X)}^{-1}\left\vert f(h)-f(h_{0})-\frac{\partial f(h_{0})}{\partial h}%
[h-h_{0}]\right\vert =o(1)$ where $\widehat{h},\widetilde{h}\in N_{K,n}$
wpa1;

(iii) $\Vert v_{K}^{\ast }\Vert _{L^{2}(X)}\nearrow +\infty $, $\sqrt{n}%
\Vert v_{K}^{\ast }\Vert _{L^{2}(X)}^{-1}\left\vert \frac{\partial f(h_{0})}{%
\partial h}[\widetilde{h}-h_{0}]\right\vert =o_{p}(1)$.
\end{assumption}

Assumption \ref{a-functional} corresponds to Assumption 3.1 in \cite%
{ChenLiaoSun} and Assumption 2.1 in \cite{ChenLiaoiid} for irregular
functionals. We refer the reader to these papers for a detailed discussion
and verification of Assumption \ref{a-functional}. Note that parts (i) and
(ii) of Assumption \ref{a-functional} are automatically satisfied when $f$
is a linear functional.

\begin{remark}
\label{rmk-snb} Certain linear and nonlinear functionals may be irregular
yet may still be bounded with respect to the sup norm. Alternative
sufficient conditions for Assumption \ref{a-functional} may be provided for
such functionals:

\begin{itemize}
\item[(a)] Suppose $f$ is a linear, irregular functional but that $f$ is {%
sup-norm bounded}, i.e. $|f(h)|\lesssim \Vert h\Vert _{\infty }$ (e.g. the
evaluation functional $f(h)=h(x)$ for some fixed $x\in \mathcal{X}$ is
sup-norm bounded because $|f(h)|=|h(x)|\leq \Vert h\Vert _{\infty }$). Then
a sufficient condition for Assumption \ref{a-functional} is
\begin{equation*}
\sqrt{n}V_{K}^{-1/2}\Vert \widetilde{h}-h_{0}\Vert _{\infty }\lesssim _{p}%
\sqrt{n}V_{K}^{-1/2}\Vert P_{K,w,n}\Vert _{\infty }\Vert h_{0}-h_{0,K}^{\ast
}\Vert _{\infty }=o_{p}(1).
\end{equation*}%
When $\Vert P_{K,w,n}\Vert _{\infty }\lesssim 1$ and $\Vert
h_{0}-h_{0,K}^{\ast }\Vert _{\infty }=O(K^{-p/d})$ then Assumption \ref%
{a-functional} is satisfied provided $\sqrt{n}V_{K}^{-1/2}K^{-p/d}=o(1)$.

\item[(b)] Suppose $f$ is a nonlinear, irregular functional whose derivative
is sup-norm bounded. Then Assumption \ref{a-functional} may be replaced with:%
\newline
(i') $v\mapsto \frac{\partial f(h_{0})}{\partial h}[v]$ is a linear
functional; \newline
(ii') ${\left\vert f(h)-f(h_{0})-\frac{\partial f(h_{0})}{\partial h}%
[h-h_{0}]\right\vert }\lesssim \Vert h-h_{0}\Vert _{\infty }^{2}$ uniformly
for $h\in N_{K,n}$;\newline
(iii') $\left\vert \frac{\partial f(h_{0})}{\partial h}[h-h_{0}]\right\vert
\lesssim \Vert h-h_{0}\Vert _{\infty }$ uniformly for $h\in N_{K,n}$; and
\newline
(iv') $\widehat{h},\widetilde{h}\in N_{K,n}$ wpa1, $\sqrt{n}\Vert
v_{K}^{\ast }\Vert _{L^{2}(X)}^{-1}\left( \Vert \widetilde{h}-h_{0}\Vert
_{\infty }+\Vert \widetilde{h}-h_{0}\Vert _{\infty }^{2}+\Vert \widehat{h}-%
\widetilde{h}\Vert _{\infty }^{2}\right) =o_{p}(1)$\newline
where $N_{K,n}=\{h\in B_{K,w}:\Vert h-h_{0}\Vert _{\infty }\leq \epsilon \}$
for some fixed $\epsilon >0$.\newline
For example, \cite{Newey1997} shows that conditions (i')(ii')(iii') are
satisfied for consumer surplus functionals in demand estimation.
\end{itemize}
\end{remark}

\begin{theorem}
\label{t-dist-new} Let Assumptions \ref{X reg}(i), \ref{resid reg}%
(i)(ii)(iv)(v), \ref{sieve reg gen}(iii), \ref{a-gram} and \ref{a-functional}
hold. Then
\begin{equation*}
\frac{\sqrt{n}(f(\widehat{h})-f(h_{0}))}{V_{K}^{1/2}}\rightarrow _{d}N(0,1)
\end{equation*}%
as $n,K\rightarrow \infty $ provided that either (a) or (b) is satisfied:

\begin{itemize}
\item[(a)] $\{(X_i,Y_i)\}_{i=1}^n$ is i.i.d.;

\item[(b)] $\{X_i\}_{i=1}^n$ is weakly dependent: Assumption \ref{a-eucgce}
holds, and ${\|\widetilde B_w ^{\prime }\widetilde B_w/n - I_K\|} =
o_p(K^{-1/2})$.
\end{itemize}
\end{theorem}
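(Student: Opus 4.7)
The argument follows the standard linearization-plus-CLT strategy. By Assumption \ref{a-functional}(i)(ii),
\[
\sqrt{n}(f(\widehat h)-f(h_{0}))/V_{K}^{1/2} \;=\; \sqrt{n}\,\tfrac{\partial f(h_{0})}{\partial h}[\widehat h - h_{0}]/V_{K}^{1/2} + o_{p}(1),
\]
where the neighbourhood condition $\widehat h, \widetilde h \in N_{K,n}$ wpa1 is verified using Lemmas \ref{lem-hhat l2} and \ref{rate gen}. Decomposing $\widehat h - h_{0} = (\widehat h - \widetilde h) + (\widetilde h - h_{0})$, the bias contribution $\sqrt{n}\,\tfrac{\partial f(h_{0})}{\partial h}[\widetilde h - h_{0}]/V_{K}^{1/2}$ is $o_{p}(1)$ by Assumption \ref{a-functional}(iii) together with $V_{K}^{1/2} \asymp \Vert v_{K}^{\ast}\Vert_{L^{2}(X)}$ (from Assumption \ref{resid reg}(ii)(iv)). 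Everything therefore reduces to analysing the variance term $\sqrt{n}\,\tfrac{\partial f(h_{0})}{\partial h}[\widehat h - \widetilde h]/V_{K}^{1/2}$.

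\textbf{Oracle reduction.} Writing $a := \tfrac{\partial f(h_{0})}{\partial h}[\widetilde b_{w}^{K}]$ and using $\widehat h - \widetilde h = \widetilde b_{w}^{K}(\cdot)'(\widetilde B_{w}'\widetilde B_{w})^{-}\widetilde B_{w}'\epsilon$, linearity of the derivative yields
\[
\sqrt{n}\,\tfrac{\partial f(h_{0})}{\partial h}[\widehat h - \widetilde h] \;=\; a'(\widetilde B_{w}'\widetilde B_{w}/n)^{-}\widetilde B_{w}'\epsilon/\sqrt{n}.
\]
Since $v_{K}^{\ast}(x) = a'\widetilde b_{w}^{K}(x)$, the natural oracle quantity is
\[
S_{n} \;:=\; (nV_{K})^{-1/2}\sum_{i=1}^{n} v_{K}^{\ast}(X_{i})\epsilon_{i} \;=\; a'\widetilde B_{w}'\epsilon/\sqrt{nV_{K}}.
\]
I will prove (i) $S_{n} \to_{d} N(0,1)$ and (ii) the difference between the normalized variance term and $S_{n}$ is $o_{p}(1)$.

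\textbf{Martingale CLT for $S_{n}$.} The array $\{v_{K}^{\ast}(X_{i})\epsilon_{i},\mathcal{F}_{i}\}$ is a martingale difference with unconditional variance $V_{K}$. The conditional variance condition $(nV_{K})^{-1}\sum_{i} v_{K}^{\ast}(X_{i})^{2}\epsilon_{i}^{2} = a'\widehat\Omega a/a'\Omega a \to_{p} 1$ follows from Assumption \ref{a-eucgce} in case (b), since $\lambda_{\min}(\Omega)$ is bounded away from zero by Assumption \ref{resid reg}(ii)(iv) and $\Vert a\Vert \asymp V_{K}^{1/2}$; in the i.i.d.\ case (a) it follows from Lemma \ref{lem-omcgce}(a) or a direct LLN argument conditional on the design. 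For the Lindeberg condition I truncate: split $\epsilon_{i} = \epsilon_{i}\mathbf{1}\{|\epsilon_{i}|\leq \ell_{n}\} + \epsilon_{i}\mathbf{1}\{|\epsilon_{i}|>\ell_{n}\}$ for a slowly growing $\ell_{n}\to\infty$, apply a bounded-summand martingale CLT to the truncated contribution, and control the tail contribution in second mean using the uniform integrability in Assumption \ref{resid reg}(v).

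\textbf{Approximation error and main obstacle.} Using the identity $(\widetilde B_{w}'\widetilde B_{w}/n)^{-} - I_{K} = -(\widetilde B_{w}'\widetilde B_{w}/n)^{-}(\widetilde B_{w}'\widetilde B_{w}/n - I_{K})$ (valid wpa1 under Assumption \ref{a-gram}), the normalized error between the variance term and $S_{n}$ is bounded by
\[
V_{K}^{-1/2}\Vert a\Vert\,\Vert(\widetilde B_{w}'\widetilde B_{w}/n)^{-}\Vert\,\Vert \widetilde B_{w}'\widetilde B_{w}/n - I_{K}\Vert\,\Vert \widetilde B_{w}'\epsilon/\sqrt{n}\Vert \;=\; O_{p}\bigl(\sqrt{K}\,\Vert \widetilde B_{w}'\widetilde B_{w}/n - I_{K}\Vert\bigr),
\]
since $E[\Vert \widetilde B_{w}'\epsilon\Vert^{2}/n] = \operatorname{tr}(\Omega) \lesssim K$ by the mds structure, which is $o_{p}(1)$ exactly under condition (b). For case (a) the crude $\sqrt{K}$ factor is wasteful; I instead condition on $X_{1},\ldots,X_{n}$, apply a Lindeberg CLT for independent summands, and show the conditional variance $a'(\widetilde B_{w}'\widetilde B_{w}/n)^{-1}\Omega_{n}(\widetilde B_{w}'\widetilde B_{w}/n)^{-1}a/V_{K}$, with $\Omega_{n} := n^{-1}\sum_{i} E[\epsilon_{i}^{2}|X_{i}]\widetilde b_{w}^{K}(X_{i})\widetilde b_{w}^{K}(X_{i})'$, tends to $1$ by a matrix perturbation bound combined with Lemma \ref{Bconvi.i.d.}. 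The main obstacle is coordinating the truncation level $\ell_{n}$ with the matrix perturbation so that the Lindeberg condition goes through under only uniform integrability (Assumption \ref{resid reg}(v)) rather than under a finite $(2+\delta)$th moment as in earlier work.
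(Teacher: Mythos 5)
Your proposal is correct and follows essentially the same route as the paper: linearize via Assumption \ref{a-functional}(i)(ii), kill the bias term with \ref{a-functional}(iii), split the variance term into the oracle martingale sum $S_n=(nV_K)^{-1/2}\sum_i v_K^\ast(X_i)\epsilon_i$ plus a correction, prove $S_n\rightarrow_d N(0,1)$ by a martingale CLT whose conditional-variance condition is Assumption \ref{a-eucgce} and whose negligibility/Lindeberg condition follows from $\Vert v_K^\ast\Vert_\infty\lesssim \zeta_{K,n}\lambda_{K,n}\Vert v_K^\ast\Vert_{sd}$ together with the uniform integrability in Assumption \ref{resid reg}(v), and bound the correction by $O_p(\sqrt K\,\Vert\widetilde B_w'\widetilde B_w/n-I_K\Vert)$ in case (b). The only divergence is in case (a), where you propose a fully design-conditional CLT; the paper instead keeps the oracle/correction split, bounds the correction in conditional mean square on a high-probability event using the sharp rate $\zeta_{K,n}\lambda_{K,n}\sqrt{(\log K)/n}$ from Lemma \ref{Bconvi.i.d.}, and verifies Lindeberg directly for the unconditional i.i.d.\ sum---which dissolves the ``coordination of $\ell_n$ with the matrix perturbation'' obstacle you flag, since no truncation level ever interacts with the Gram-matrix error.
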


We now consider the special case of irregular but sup-norm bounded linear or
nonlinear functionals as discussed in Remark \ref{rmk-snb}. The sup-norm
convergence rates for series LS estimators in Section \ref{main sec} are
employed to derive asymptotic normality of plug-in estimators of such
functionals under weak conditions.
To save space, for the weakly dependent case we only present sufficient
conditions for asymptotic normality of $f(\widehat{h})$ when the regression
error has no more than a finite 4th absolute moment (i.e., $E[|\epsilon
_{i}|^{2+\delta }]<\infty $ for some $0\leq \delta \leq 2$). We also take $%
\mathcal{X }= [0,1]^d$ and $w_{n}=1$ for all $n$ for simplicity.

\begin{corollary}
\label{inference-2} Let $f$ be an irregular but sup-norm bounded linear
functional, and let Assumptions \ref{X reg} (with $\mathcal{X}=[0,1]^{d}$), %
\ref{resid reg}(i)(ii)(iv)(v), \ref{parameter regression}, and \ref{b sieve}
hold. Then
\begin{equation*}
\frac{\sqrt{n}(f(\widehat{h})-f(h_{0}))}{V_{K}^{1/2}}\rightarrow _{d}N(0,1)
\end{equation*}%
as $n,K \to \infty$ provided that either (a), (b) or (c) is satisfied:

\begin{itemize}
\item[(a)] $\{(X_{i},Y_{i})\}_{i=1}^{n}$ is i.i.d.: $\sqrt{n}%
V_{K}^{-1/2}K^{-p/d}=o(1)$ and $(K\log K)/n=o(1)$;

\item[(b)] $\{(X_{i},Y_{i})\}_{i=1}^{n}$ is exponentially $\beta $-mixing:
Assumption \ref{resid reg}(iii) also holds, $\sqrt{n}%
V_{K}^{-1/2}K^{-p/d}=o(1)$, and $K^{(2+\delta )/\delta }(\log n)^{2}/n=o(1)$
with $\delta \leq 2$;

\item[(c)] $\{(X_{i},Y_{i})\}_{i=1}^{n}$ is algebraically $\beta $-mixing at
rate $\gamma $: Assumption \ref{resid reg}(iii) also holds, $\sqrt{n}%
V_{K}^{-1/2}K^{-p/d}=o(1)$, and $K^{(2+\delta )/\delta }(\log K)/n^{\gamma
/(1+\gamma )}=o(1)$ with $\delta \leq 2$.
\end{itemize}
\end{corollary}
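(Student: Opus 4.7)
The plan is to invoke Theorem \ref{t-dist-new} after reducing Assumption \ref{a-functional} to a low-level bias condition via the linear, sup-norm-bounded specialization in Remark \ref{rmk-snb}(a). Assumptions \ref{X reg}, \ref{resid reg}(i)(ii)(iv)(v) are imposed directly; under Assumption \ref{X reg} (with $\mathcal{X}=[0,1]^{d}$) and Assumption \ref{b sieve}, Assumption \ref{sieve reg gen}(iii) holds with $\zeta_{K,n}\lesssim\sqrt{K}$ and $\lambda_{K,n}\lesssim 1$. So the remaining tasks are (i) to verify Assumption \ref{a-functional}, (ii) to verify Assumption \ref{a-gram} in all three cases, and, for cases (b) and (c), (iii) to establish the sharper Gram-matrix rate $\|\widetilde{B}_{w}^{\prime}\widetilde{B}_{w}/n-I_{K}\|=o_{p}(K^{-1/2})$ and Assumption \ref{a-eucgce}.

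For (i), Remark \ref{rmk-snb}(a) reduces Assumption \ref{a-functional} to
$\sqrt{n}\,V_{K}^{-1/2}\,\|P_{K,w,n}\|_{\infty,w}\,\|h_{0}-h_{0,K}^{\ast}\|_{\infty,w}=o_{p}(1)$. Under Assumption \ref{b sieve}, Theorem \ref{c-pstable} for wavelet sieves and the spline result of \cite{Huang2003} (extended to weakly dependent regressors in the proof of Theorem \ref{sup norm rate regression}) both yield $\|P_{K,w,n}\|_{\infty,w}\lesssim 1$ wpa1. Combined with the standard approximation bound $\|h_{0}-h_{0,K}^{\ast}\|_{\infty,w}\lesssim K^{-p/d}$ under Assumptions \ref{X reg}, \ref{parameter regression} and \ref{b sieve}, this reduces to the hypothesis $\sqrt{n}\,V_{K}^{-1/2}\,K^{-p/d}=o(1)$, which is imposed in each case.

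For (ii) and (iii), I exploit $\zeta_{K,n}\lambda_{K,n}\lesssim\sqrt{K}$. In case (a), $(K\log K)/n=o(1)$ immediately gives $\zeta_{K,n}\lambda_{K,n}\sqrt{(\log K)/n}=o(1)$, so Assumption \ref{a-gram}(a) holds and Theorem \ref{t-dist-new}(a) applies. In case (b), Lemma \ref{Bconvbeta} with $q\asymp\log n$ gives $\|\widetilde{B}_{w}^{\prime}\widetilde{B}_{w}/n-I_{K}\|=O_{p}(\sqrt{K(\log n)^{2}/n})$, which is $o_{p}(K^{-1/2})$ whenever $K^{2}(\log n)^{2}/n=o(1)$. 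Since $(2+\delta)/\delta\geq 2$ for $\delta\leq 2$, the hypothesis $K^{(2+\delta)/\delta}(\log n)^{2}/n=o(1)$ delivers both Assumption \ref{a-gram}(b) and the required $o_{p}(K^{-1/2})$ rate. The same hypothesis, via Lemma \ref{lem-omcgce}(b), verifies Assumption \ref{a-eucgce}, since $(\zeta_{K,n}\lambda_{K,n})^{(2+\delta)/\delta}\sqrt{(\log n)^{2}/n}\lesssim\sqrt{K^{(2+\delta)/\delta}(\log n)^{2}/n}=o(1)$. Case (c) is handled identically, substituting the algebraically $\beta$-mixing versions of Lemmas \ref{Bconvbeta} and \ref{lem-omcgce} with $q=q(n)$ such that $\beta(q)n/q=o(1)$ (so $q$ of order $n^{1/(1+\gamma)}$ up to a slowly varying factor).

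The only delicate bookkeeping is that Theorem \ref{t-dist-new}(b) demands two distinct improvements on the weakly dependent Gram-matrix behaviour at once: the $o_{p}(K^{-1/2})$ rate for $\widetilde{B}_{w}^{\prime}\widetilde{B}_{w}/n-I_{K}$ and the $o_{p}(1)$ convergence of $\widehat{\Omega}-\Omega$; each requires a higher power of $K$ than simple Gram-matrix consistency, which is why the moment index $\delta$ enters together with the factor $K^{2/\delta}$ inside the growth condition. Once these have been verified, the conclusion of Corollary \ref{inference-2} follows directly from Theorem \ref{t-dist-new}.
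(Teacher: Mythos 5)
Your proposal is correct and follows essentially the same route as the paper's own proof: reduce to Theorem \ref{t-dist-new}, verify Assumption \ref{a-functional} via Remark \ref{rmk-snb}(a) using the sup-norm stability of the empirical projection together with the approximation bound $\|h_{0}-h_{0,K}^{\ast}\|_{\infty,w}\lesssim K^{-p/d}$, and verify Assumptions \ref{a-gram}, \ref{a-eucgce} and the $o_{p}(K^{-1/2})$ Gram-matrix rate through Lemmas \ref{Bconvi.i.d.}, \ref{Bconvbeta} and \ref{lem-omcgce}. Your explicit observation that $(2+\delta)/\delta\geq 2$ for $\delta\leq 2$ (so the single growth condition delivers both the sharper Gram rate and Assumption \ref{a-eucgce}) is exactly the bookkeeping the paper leaves implicit.
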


Corollary \ref{inference-2} part (a) extends the weakest known result on
pointwise asymptotic normality of spline LS estimators in \cite{Huang2003}
to general sup-norm bounded linear functionals of spline or wavelet series
LS estimators.\footnote{%
Under the assumption of empirical identifiability (see equation (\ref{sm5}))
and other conditions similar to the ones listed in Corollary \ref%
{inference-2} part (a), \cite{ChenHuang03} derived the asymptotic normality
of plug-in spline LS estimators of sup-norm bounded linear functionals (see
their Theorem 4).}

\begin{corollary}
\label{inference-nl} Let $f$ be an irregular but sup-norm bounded nonlinear
functional, and let Assumptions \ref{X reg} (with $\mathcal{X}=[0,1]^{d} $), %
\ref{resid reg}, \ref{parameter regression}, \ref{b sieve} and \ref%
{a-functional}(i')(ii')(iii') hold. Then
\begin{equation*}
\frac{\sqrt{n}(f(\widehat{h})-f(h_{0}))}{V_{K}^{1/2}}\rightarrow _{d}N(0,1)
\end{equation*}%
as $n,K \to \infty$ provided that either (a), (b) or (c) is satisfied:

\begin{itemize}
\item[(a)] $\{(X_{i},Y_{i})\}_{i=1}^{n}$ is i.i.d.: $\sqrt{n}%
V_{K}^{-1/2}K^{-p/d}=o(1)$ and $K^{(2+\delta )/\delta }(\log n)/n \lesssim 1$
with $\delta < 2$;

\item[(b)] $\{(X_{i},Y_{i})\}_{i=1}^{n}$ is exponentially $\beta $-mixing: $%
\sqrt{n}V_{K}^{-1/2}K^{-p/d}=o(1)$ and $K^{(2+\delta )/\delta }(\log
n)^{2}/n=o(1)$ with $\delta \leq 2$;

\item[(c)] $\{(X_{i},Y_{i})\}_{i=1}^{n}$ is algebraically $\beta $-mixing at
rate $\gamma $: $\sqrt{n}V_{K}^{-1/2}K^{-p/d}=o(1)$ and $K^{(2+\delta
)/\delta }(\log K)/n^{\gamma /(1+\gamma )}=o(1)$ with $\delta \leq 2$.
\end{itemize}
\end{corollary}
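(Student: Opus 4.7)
The strategy is to invoke Theorem \ref{t-dist-new}, replacing Assumption \ref{a-functional} with the alternative sufficient conditions (i')--(iv') from Remark \ref{rmk-snb}(b), which are tailored to nonlinear sup-norm bounded functionals. Parts (i'), (ii'), and (iii') are given by hypothesis; what remains is to verify (iv'), the inclusion $\widehat h, \widetilde h \in N_{K,n} = \{h \in B_{K,w} : \|h-h_0\|_\infty \leq \epsilon\}$ wpa1, and the remaining conditions of Theorem \ref{t-dist-new} (Assumption \ref{a-gram} in case (a); Assumption \ref{a-eucgce} together with the $o_p(K^{-1/2})$ Gram-matrix rate in cases (b) and (c)).

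The key ingredient is sharp sup-norm control of both the bias and the variance of $\widehat h$. Under Assumptions \ref{X reg} and \ref{b sieve} we have $\zeta_{K,n} \lesssim \sqrt{K}$ and $\lambda_{K,n} \lesssim 1$, and the proof of Theorem \ref{sup norm rate regression} establishes $\|P_{K,w,n}\|_\infty \lesssim 1$ wpa1 for both sieves (via \cite{Huang2003} for splines and Theorem \ref{c-pstable} for wavelets). Writing $\widetilde h - h_0 = (P_{K,w,n} - I)(h_{0,K}^* - h_0)$ and using $\|h_0 - h_{0,K}^*\|_\infty \lesssim K^{-p/d}$ yields the bias bound $\|\widetilde h - h_0\|_\infty = O_p(K^{-p/d})$. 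Each of the Corollary's growth restrictions is calibrated so that, after substituting $\zeta_{K,n}\lambda_{K,n} \lesssim \sqrt{K}$, the hypotheses of Lemma \ref{reglem gen} are satisfied (e.g., (i) reduces to $K^{(2+\delta)/\delta} \lesssim n/\log n$), producing the variance bound $\|\widehat h - \widetilde h\|_\infty = O_p(\sqrt{K\log n/n})$; Assumption \ref{a-gram}(b) in the weakly dependent cases is verified by Lemma \ref{Bconvbeta} with an appropriate block size $q$.

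Condition (iv') is now checked termwise. The first term, $\sqrt{n}V_K^{-1/2}\|\widetilde h - h_0\|_\infty = O_p(\sqrt{n}V_K^{-1/2}K^{-p/d})$, is $o(1)$ by hypothesis; the second equals the first times the vanishing factor $K^{-p/d}$, so is also $o(1)$. The third, $\sqrt{n}V_K^{-1/2}\|\widehat h - \widetilde h\|_\infty^2 = O_p(V_K^{-1/2}K\log n/\sqrt{n})$, is $o_p(1)$ because $V_K\to\infty$ (irregularity of $f$) and the growth constraint forces $K\log n/\sqrt{n}\to 0$: in case (a) with $\delta<2$, $K^{(2+\delta)/\delta}(\log n)/n\lesssim 1$ gives $K\log n/\sqrt n \lesssim n^{(\delta-2)/(2(2+\delta))}(\log n)^{2/(2+\delta)}=o(1)$ (the strict inequality $\delta<2$ is essential here); in cases (b) and (c), since $(2+\delta)/\delta\geq 2$ for $\delta\leq 2$, the $o(1)$ version of the growth constraint forces $K^2(\log n)^2/n = o(1)$, whence $K\log n/\sqrt n = o(1)$ directly. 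The inclusion $\widehat h, \widetilde h \in N_{K,n}$ wpa1 then follows since the sup-norm bias and variance both vanish.

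Finally, for the weakly dependent cases I would verify Assumption \ref{a-eucgce} via Lemma \ref{lem-omcgce}, whose growth hypothesis collapses to exactly the Corollary's condition once $\zeta_{K,n}\lambda_{K,n} \lesssim \sqrt{K}$ is substituted in; the $o_p(K^{-1/2})$ Gram-matrix rate of Theorem \ref{t-dist-new}(b) follows from Lemma \ref{Bconvbeta} because the Corollary's growth rates enforce $K^2 q\log K/n = o(1)$ (with $q \asymp \log n$ for exponential $\beta$-mixing, this is implied by $K^2(\log n)^2/n \leq K^{(2+\delta)/\delta}(\log n)^2/n = o(1)$). The main obstacle is arranging that $\|\widehat h - \widetilde h\|_\infty^2$, which inflates the $L^2$ variance by a factor of $K\log n$, still vanishes after multiplication by $\sqrt{n}/V_K^{1/2}$; this is precisely what dictates the shape of the growth conditions, including the strict $\delta < 2$ in case (a) versus $\delta \leq 2$ with $o(1)$-type rates in cases (b) and (c).
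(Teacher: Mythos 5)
Your proposal is correct and follows essentially the same route as the paper: the paper's proof likewise invokes Theorem \ref{t-dist-new} with Remark \ref{rmk-snb}(b)'s conditions (i')--(iv') in place of Assumption \ref{a-functional}, verifies (iv') from $\Vert \widetilde{h}-h_{0}\Vert _{\infty }=O_{p}(K^{-p/d})$ (proof of Theorem \ref{sup norm rate regression}) and $\Vert \widehat{h}-\widetilde{h}\Vert _{\infty }=O_{p}(\sqrt{(K\log n)/n})$ (Lemma \ref{reglem gen}), and checks Assumptions \ref{a-gram}, \ref{a-eucgce} and the $o_{p}(K^{-1/2})$ Gram-matrix rate via Lemmas \ref{Bconvi.i.d.}, \ref{Bconvbeta} and \ref{lem-omcgce} exactly as you do. Your write-up simply makes explicit the rate arithmetic (e.g. why $\delta<2$ is needed in case (a)) that the paper leaves implicit by referring back to Corollary \ref{inference-2}.
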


Conditions for weakly dependent data in Corollary \ref{inference-nl} parts
(b) and (c) are natural extensions of those in part (a) for i.i.d. data,
which in turn are much weaker than the well-known conditions in \cite%
{Newey1997} for the asymptotic normality of nonlinear functionals of spline
LS estimators, namely $\sqrt{n}K^{-p/d}=o(1)$, $K^{4}/n=o(1)$ and $%
\sup_{x}E[\left\vert \epsilon _{i}\right\vert ^{4}|X_{i}=x]<\infty $.%

\subsection{Asymptotic normality of sieve t statistics for general
functionals}

We now turn to the consistent estimation of $V_{K}=\Vert v_{K}^{\ast }\Vert
_{sd}^{2}$ and feasible asymptotic inference for $f(h_{0})$.

\begin{assumption}
\label{t-functional} $\Vert v_{K}^{\ast }\Vert _{L^{2}(X)}^{-1}\left\Vert
\frac{\partial f(h)}{\partial h}[\widetilde{b}_{w}^{K}]-\frac{\partial
f(h_{0})}{\partial h}[\widetilde{b}_{w}^{K}]\right\Vert =o(1)$ uniformly
over $h\in N_{K,n}$ or $B_{\epsilon ,\infty }(h_{0})$.
\end{assumption}

Note that Assumption \ref{t-functional} is automatically satisfied when $f$
is a linear functional. It is only required to establish consistency of $%
\widehat{\Vert v_{K}^{\ast }\Vert }_{sd}$ for a nonlinear functional, and
corresponds to Assumption 3.1(iii) of \cite{ChenLiaoiid}.

The first part of the following Lemma establishes the consistency of the
sieve variance estimator under both the i.i.d. and general weakly dependent
data.

\begin{lemma}
\label{lem-varconsistent} Let Assumptions \ref{X reg}(i)(ii), \ref{resid reg}%
(i)(ii)(iv), \ref{sieve reg gen}(iii), \ref{a-gram}, \ref{a-eucgce} and \ref%
{t-functional} hold and $\Vert \widehat{h}-h_{0}\Vert _{\infty ,w}=o_{p}(1)$%
. Then:

(1) $\left\vert \frac{\widehat{\Vert v_{K}^{\ast }\Vert }_{sd}}{{\Vert
v_{K}^{\ast }\Vert }_{sd}}-1\right\vert =o_{p}(1)$ as $n,K\rightarrow \infty
$.

(2) Further, if Assumptions \ref{resid reg}(v) and \ref{a-functional} hold,
then:
\begin{equation*}
\frac{\sqrt{n}(f(\widehat{h})-f(h_{0}))}{\widehat{V}_{K}^{1/2}}\rightarrow
_{d}N(0,1)
\end{equation*}%
as $n,K\rightarrow \infty $ provided that either (a) $\{(X_{i},Y_{i})%
\}_{i=1}^{n}$ is i.i.d., or (b) $\{X_{i}\}_{i=1}^{n}$ is weakly dependent
with ${\Vert \widetilde{B}_{w}^{\prime }\widetilde{B}_{w}/n-I_{K}\Vert }%
=o_{p}(K^{-1/2})$ is satisfied.
\end{lemma}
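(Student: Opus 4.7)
My plan is to prove part~(1) by an orthonormalized decomposition of $\widehat V_K$ and $V_K$, then get part~(2) immediately from part~(1) and Theorem~\ref{t-dist-new} via Slutsky.

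Using the identity $(B_w'B_w/n)^{-} = E[b_w^K b_w^{K\prime}]^{-1/2}(\widetilde B_w'\widetilde B_w/n)^{-} E[b_w^K b_w^{K\prime}]^{-1/2}$, I would first rewrite
\[
\widehat V_K = \widehat\alpha_K' \widehat\Omega_{\mathrm{res}}\,\widehat\alpha_K,\qquad V_K = \alpha_K'\Omega\alpha_K,
\]
where $\alpha_K = \frac{\partial f(h_0)}{\partial h}[\widetilde b_w^K]$, $\widehat\alpha_K = (\widetilde B_w'\widetilde B_w/n)^{-}\frac{\partial f(\widehat h)}{\partial h}[\widetilde b_w^K]$, and $\widehat\Omega_{\mathrm{res}} = n^{-1}\sum_i (Y_i-\widehat h(X_i))^2 \widetilde b_w^K(X_i)\widetilde b_w^K(X_i)'$. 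Note that $\|\alpha_K\|^2=\|v_K^\ast\|_{L^2(X)}^2\asymp V_K$ by Assumption~\ref{resid reg}(ii)(iv), and $\|\Omega\|=O(1)$ by Assumption~\ref{resid reg}(ii).

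The substantive step is to show $\|\widehat\Omega_{\mathrm{res}}-\Omega\|=o_p(1)$. Expanding $(Y_i-\widehat h(X_i))^2 = \epsilon_i^2 + 2\epsilon_i(h_0-\widehat h)(X_i) + (h_0-\widehat h)(X_i)^2$ yields $\widehat\Omega_{\mathrm{res}} = \widehat\Omega + 2R_1 + R_2$. Assumption~\ref{a-eucgce} handles $\widehat\Omega-\Omega$. For the quadratic remainder, since $\widetilde b_w^K$ is supported on $\mathcal D_n$, $\|R_2\| \le \|\widehat h-h_0\|_{\infty,w}^2\,\|\widetilde B_w'\widetilde B_w/n\| = o_p(1)\cdot O_p(1)$ using Assumption~\ref{a-gram}. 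The cross term $R_1$ is the delicate piece: bounding $\|R_1\|=\sup_{\|v\|=1}|v'R_1 v|$ via the scalar Cauchy--Schwarz inequality in~$i$ gives
\[
\|R_1\|\le \|\widehat\Omega\|^{1/2}\cdot \|\widehat h-h_0\|_{\infty,w}\cdot \|\widetilde B_w'\widetilde B_w/n\|^{1/2} = O_p(1)\cdot o_p(1)\cdot O_p(1)=o_p(1),
\]
where I again use that $\widetilde b_w^K$ is supported on $\mathcal D_n$ to replace $(h_0-\widehat h)(X_i)$ by its weighted sup-norm bound. This is the main technical obstacle; once it is in place, the whole decomposition closes. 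Next, writing $\widehat\alpha_K-\alpha_K = (\widetilde B_w'\widetilde B_w/n)^{-}\bigl(\frac{\partial f(\widehat h)}{\partial h}[\widetilde b_w^K]-\frac{\partial f(h_0)}{\partial h}[\widetilde b_w^K]\bigr)+\bigl((\widetilde B_w'\widetilde B_w/n)^{-}-I_K\bigr)\alpha_K$, Assumption~\ref{t-functional} and the preliminary sup-norm consistency of $\widehat h$ control the first summand while Assumption~\ref{a-gram} controls the second, giving $\|\widehat\alpha_K-\alpha_K\|=o_p(\|\alpha_K\|)$.

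Finally, I would use the expansion
\[
\widehat V_K-V_K = \widehat\alpha_K'(\widehat\Omega_{\mathrm{res}}-\Omega)\widehat\alpha_K + (\widehat\alpha_K-\alpha_K)'\Omega(\widehat\alpha_K-\alpha_K) + 2\alpha_K'\Omega(\widehat\alpha_K-\alpha_K),
\]
divide by $V_K\asymp\|\alpha_K\|^2$, and apply the bounds above together with $\|\widehat\alpha_K\|=O_p(\|\alpha_K\|)$ to obtain $|\widehat V_K/V_K-1|=o_p(1)$, from which part~(1) follows by the delta method. For part~(2), since the hypotheses of Theorem~\ref{t-dist-new} are exactly those added in part~(2), I have $\sqrt n(f(\widehat h)-f(h_0))/V_K^{1/2}\to_d N(0,1)$; combining this with part~(1) via Slutsky yields the stated convergence with the estimated variance.
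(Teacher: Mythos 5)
Your proposal is correct and reaches the same conclusion via the same basic ingredients (Assumption \ref{a-eucgce} for the leading term, the sup-norm consistency hypothesis for the remainder terms, Assumption \ref{t-functional} for the derivative difference, and Slutsky plus Theorem \ref{t-dist-new} for part (2)), but you organize the decomposition differently from the paper. The paper expands $\widehat{\Vert v_{K}^{\ast }\Vert }_{sd}^{2}/\Vert v_{K}^{\ast }\Vert _{sd}^{2}$ directly into six scalar terms $T_{1},\ldots ,T_{6}$, mixing the residual expansion $(Y_{i}-\widehat{h}(X_{i}))^{2}=\epsilon _{i}^{2}+2\epsilon _{i}(h_{0}-\widehat{h})(X_{i})+(h_{0}-\widehat{h})(X_{i})^{2}$ with the $\widehat{v}_{K}^{\ast }$ versus $v_{K}^{\ast }$ discrepancy, and controls the cross terms $T_{5},T_{6}$ by the elementary inequality $2|a|\leq 1+a^{2}$. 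You instead separate the two error sources at the matrix/vector level: first operator-norm consistency $\Vert \widehat{\Omega }_{\mathrm{res}}-\Omega \Vert =o_{p}(1)$ (with the cross term $R_{1}$ handled by Cauchy--Schwarz in $i$, which correctly gives $\Vert \widehat{\Omega }\Vert ^{1/2}\Vert \widehat{h}-h_{0}\Vert _{\infty ,w}\Vert \widetilde{B}_{w}^{\prime }\widetilde{B}_{w}/n\Vert ^{1/2}$ using the support of $\widetilde{b}_{w}^{K}$ on $\mathcal{D}_{n}$), then $\Vert \widehat{\alpha }_{K}-\alpha _{K}\Vert =o_{p}(\Vert \alpha _{K}\Vert )$, and finally a three-term quadratic-form expansion. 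Your version buys a slightly stronger and reusable intermediate statement (consistency of the residual-based second-moment matrix in spectral norm, not just of the particular quadratic form) and arguably cleaner bookkeeping; the paper's version avoids needing $\Vert \widehat{\Omega }_{\mathrm{res}}-\Omega \Vert $ itself and works term by term. Two minor points: the passage from $|\widehat{V}_{K}/V_{K}-1|=o_{p}(1)$ to the ratio of standard deviations is just the continuous mapping theorem rather than the delta method, and you should state explicitly that $\Vert \widehat{h}-h_{0}\Vert _{\infty ,w}=o_{p}(1)$ places $\widehat{h}$ in the neighborhood over which Assumption \ref{t-functional} holds uniformly; neither affects correctness.
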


Lemma \ref{lem-varconsistent} can be combined with different sufficient
conditions for Assumptions \ref{a-gram}, \ref{a-eucgce} and \ref%
{a-functional} to yield different special cases of the asymptotic normality
of sieve t statistics for general (possibly) nonlinear functionals. We state
three special cases below. The following Theorem is applicable to series LS
estimators with an arbitrary basis.

\begin{theorem}
\label{c-inference} Let Assumptions \ref{X reg}(i)(ii), \ref{resid reg}, \ref%
{a-wdelta}, \ref{sieve reg gen}, \ref{a-functional} and \ref{t-functional}
hold and $\Vert \widetilde{h}-h_{0}\Vert _{\infty }=o_{p}(1)$. Then
\begin{equation*}
\frac{\sqrt{n}(f(\widehat{h})-f(h_{0}))}{\widehat{V}_{K}^{1/2}}\rightarrow
_{d}N(0,1)
\end{equation*}%
as $n,K\rightarrow \infty $ provided that either (a), (b) or (c) is
satisfied:

\begin{itemize}
\item[(a)] $\{(X_{i},Y_{i})\}_{i=1}^{n}$ is i.i.d.: $(\zeta _{K,n}\lambda
_{K,n})^{(2+\delta )/\delta }\sqrt{(\log n)/n}=o(1)$;

\item[(b)] $\{(X_{i},Y_{i})\}_{i=1}^{n}$ is exponentially $\beta $-mixing: $%
\max (\sqrt{K},(\zeta _{K,n}\lambda _{K,n})^{2/\delta })\times (\zeta
_{K,n}\lambda _{K,n})\sqrt{\frac{(\log n)^{2}}{n}}=o(1)$;

\item[(c)] $\{(X_{i},Y_{i})\}_{i=1}^{n}$ is algebraically $\beta $-mixing at
rate $\gamma $: $\max (\sqrt{K},(\zeta _{K,n}\lambda _{K,n})^{2/\delta
})\times (\zeta _{K,n}\lambda _{K,n})\sqrt{\frac{\log n}{n^{\gamma
/(1+\gamma )}}}=o(1)$.
\end{itemize}
\end{theorem}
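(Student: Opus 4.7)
\textbf{Proof proposal for Theorem \ref{c-inference}.} The strategy is to reduce to Lemma \ref{lem-varconsistent}(2) by verifying its two remaining hypotheses (Assumptions \ref{a-gram} and \ref{a-eucgce}), together with the sup-norm consistency $\|\widehat{h}-h_0\|_{\infty,w}=o_p(1)$ and, in the weakly dependent cases (b)--(c), the stronger Gram-matrix rate $\|\widetilde{B}_w'\widetilde{B}_w/n-I_K\|=o_p(K^{-1/2})$. All remaining hypotheses of Lemma \ref{lem-varconsistent} (namely Assumptions \ref{X reg}(i)(ii), \ref{resid reg}(i)(ii)(iv)(v), \ref{sieve reg gen}(iii), \ref{a-functional}, \ref{t-functional}) are part of the hypotheses of the theorem.

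In case (a), the stated growth restriction $(\zeta_{K,n}\lambda_{K,n})^{(2+\delta)/\delta}\sqrt{(\log n)/n}=o(1)$ implies $\zeta_{K,n}\lambda_{K,n}\sqrt{(\log K)/n}=o(1)$ because $(2+\delta)/\delta>1$ and $\log K\le \log n$. Hence Assumption \ref{a-gram}(a) holds (invoking Lemma \ref{Bconvi.i.d.}), and Assumption \ref{a-eucgce} follows from Lemma \ref{lem-omcgce}(a). In cases (b) and (c), the $\max$ in the condition separates into two pieces. The piece with the $\sqrt{K}$ factor reads, for exponential $\beta$-mixing, $\zeta_{K,n}\lambda_{K,n}\sqrt{K(\log n)^2/n}=o(1)$; choosing $q\asymp \log n$ (so that $\beta(q)n/q=o(1)$) in Lemma \ref{Bconvbeta} yields $\|\widetilde{B}_w'\widetilde{B}_w/n-I_K\|=O_p(\zeta_{K,n}\lambda_{K,n}\sqrt{(\log n)(\log K)/n})=o_p(K^{-1/2})$, which simultaneously delivers Assumption \ref{a-gram}(b) and the stronger rate required by Lemma \ref{lem-varconsistent}(2)(b). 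The piece with the $(\zeta_{K,n}\lambda_{K,n})^{2/\delta}$ factor is exactly the hypothesis of Lemma \ref{lem-omcgce}(b), giving Assumption \ref{a-eucgce}. Case (c) is analogous with $q$ chosen to make $\beta(q)n/q=o(1)$ for the given algebraic mixing rate $\gamma$.

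It remains to check $\|\widehat{h}-h_0\|_{\infty,w}=o_p(1)$. By the triangle inequality,
\[
\|\widehat{h}-h_0\|_{\infty,w}\le \|\widehat{h}-\widetilde{h}\|_\infty+\|\widetilde{h}-h_0\|_\infty,
\]
the second term being $o_p(1)$ by hypothesis. For the first term I would apply Lemma \ref{reglem gen}: condition (i) there is exactly $(\zeta_{K,n}\lambda_{K,n})^{(2+\delta)/\delta}\lesssim \sqrt{n/\log n}$, which is weaker than the stated growth condition in each of (a)--(c); condition (ii)(a) holds in case (a), and condition (ii)(b) is ensured by the matrix rate already verified in cases (b)--(c). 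Hence $\|\widehat{h}-\widetilde{h}\|_\infty=O_p(\zeta_{K,n}\lambda_{K,n}\sqrt{(\log n)/n})=o_p(1)$.

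With all inputs in place, Lemma \ref{lem-varconsistent}(2) yields the claimed asymptotic normality. The main bookkeeping obstacle is disentangling the $\max(\sqrt{K},(\zeta_{K,n}\lambda_{K,n})^{2/\delta})$ condition so that both Lemma \ref{Bconvbeta} (at the sharper $o_p(K^{-1/2})$ rate) and Lemma \ref{lem-omcgce} receive the growth rates they need; however, the $\max$ was stated precisely so that each piece targets one of these two lemmas, so no additional work beyond matching rates is required.
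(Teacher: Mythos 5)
Your proposal is correct and follows essentially the same route as the paper's own proof: reduce to Lemma \ref{lem-varconsistent}(2), verify Assumption \ref{a-gram} via Lemmas \ref{Bconvi.i.d.} and \ref{Bconvbeta}, verify Assumption \ref{a-eucgce} and the $o_p(K^{-1/2})$ rate via Lemmas \ref{lem-omcgce} and \ref{Bconvbeta}, and obtain $\Vert \widehat{h}-h_{0}\Vert _{\infty ,w}=o_{p}(1)$ from the hypothesis on $\widetilde{h}$ together with Lemma \ref{reglem gen}. Your explicit accounting of how the two pieces of the $\max$ condition feed Lemma \ref{Bconvbeta} and Lemma \ref{lem-omcgce} respectively is exactly the intended bookkeeping.
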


The following Corollaries are direct consequences of Theorem \ref%
{c-inference} for linear and nonlinear sup-norm bounded functionals, with
spline or wavelet bases. For simplicity, we take $w_{n}=1$ for all $n$ and $%
\mathcal{X}=[0,1]^{d}$.

\begin{corollary}
\label{c-inference-linear} Let Assumptions \ref{X reg} (with $\mathcal{X}%
=[0,1]^{d}$), \ref{resid reg}, \ref{parameter regression}, and \ref{b sieve}
hold for a sup-norm bounded linear functional. Then
\begin{equation*}
\frac{\sqrt{n}(f(\widehat{h})-f(h_{0}))}{\widehat{V}_{K}^{1/2}}\rightarrow
_{d}N(0,1)
\end{equation*}%
as $n,K\rightarrow \infty $ provided that either (a), (b) or (c) is
satisfied:

\begin{itemize}
\item[(a)] $\{(X_{i},Y_{i})\}_{i=1}^{n}$ is i.i.d.: $\sqrt{n}%
V_{K}^{-1/2}K^{-p/d}=o(1)$ and $K^{(2+\delta )/\delta }(\log n)/n=o(1)$;

\item[(b)] part (b) of Corollary \ref{inference-2};

\item[(c)] part (c) of Corollary \ref{inference-2}.
\end{itemize}
\end{corollary}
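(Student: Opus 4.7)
The plan is to derive this corollary as a direct specialization of Theorem \ref{c-inference} using the structural properties of spline and wavelet sieves together with Remark \ref{rmk-snb}(a) for sup-norm bounded linear functionals.

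First, I would assemble the basis-specific constants. Under Assumption \ref{b sieve} with $\mathcal{X}=[0,1]^{d}$, $w_{n}\equiv 1$, and Assumption \ref{X reg}(iv) bounding the density away from zero and infinity, the standard facts cited after Assumption \ref{sieve reg gen} give $\zeta_{K,n}\lesssim \sqrt{K}$, $\lambda_{K,n}\lesssim 1$, and hence $\zeta_{K,n}\lambda_{K,n}\asymp\sqrt{K}$. Assumption \ref{a-wdelta} is trivial on compact $\mathcal{X}$, and Assumptions \ref{X reg} and \ref{b sieve} imply the other parts of Assumption \ref{sieve reg gen}. The sup-norm stability $\|P_{K,w,n}\|_{\infty}\lesssim 1$ wpa1 holds for splines by (the weakly dependent extension of) \cite{Huang2003} used in the proof of Theorem \ref{sup norm rate regression} and for wavelets by Theorem \ref{c-pstable}. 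Combined with Assumption \ref{parameter regression} and the approximation bound $\inf_{h\in B_{K}}\|h_{0}-h\|_{\infty}\lesssim K^{-p/d}$, Lemma \ref{rate gen} gives the bias control $\|\widetilde{h}-h_{0}\|_{\infty}\lesssim_{p}K^{-p/d}=o_{p}(1)$, which supplies the consistency prerequisite $\|\widetilde{h}-h_{0}\|_{\infty}=o_{p}(1)$ needed by Theorem \ref{c-inference}.

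Second, I would verify Assumption \ref{a-functional} via Remark \ref{rmk-snb}(a) and note that Assumption \ref{t-functional} is automatic for linear $f$. Linearity makes parts (i) and (ii) of Assumption \ref{a-functional} vacuous. For part (iii), sup-norm boundedness yields
\begin{equation*}
\Bigl|\tfrac{\partial f(h_{0})}{\partial h}[\widetilde{h}-h_{0}]\Bigr|=|f(\widetilde{h}-h_{0})|\lesssim \|\widetilde{h}-h_{0}\|_{\infty}\lesssim_{p}K^{-p/d},
\end{equation*}
so under each scenario's hypothesis $\sqrt{n}V_{K}^{-1/2}K^{-p/d}=o(1)$ we obtain $\sqrt{n}\|v_{K}^{\ast}\|_{L^{2}(X)}^{-1}|\tfrac{\partial f(h_{0})}{\partial h}[\widetilde{h}-h_{0}]|=o_{p}(1)$, using $\|v_{K}^{\ast}\|_{L^{2}(X)}^{2}\asymp V_{K}$ by Assumption \ref{resid reg}(ii)(iv).

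Third, I would show that the growth conditions of Theorem \ref{c-inference}(a)--(c) reduce to the ones stated in the corollary upon substituting $\zeta_{K,n}\lambda_{K,n}\asymp\sqrt{K}$. Part (a) becomes $K^{(2+\delta)/(2\delta)}\sqrt{(\log n)/n}=o(1)$, equivalent to $K^{(2+\delta)/\delta}(\log n)/n=o(1)$. For parts (b) and (c), the constraint $\delta\leq 2$ ensures $K^{1/\delta}\geq\sqrt{K}$, so $\max(\sqrt{K},(\zeta_{K,n}\lambda_{K,n})^{2/\delta})=K^{1/\delta}$, and the conditions collapse to $K^{(2+\delta)/\delta}(\log n)^{2}/n=o(1)$ and $K^{(2+\delta)/\delta}(\log K)/n^{\gamma/(1+\gamma)}=o(1)$, exactly as in Corollary \ref{inference-2}(b)(c). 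All hypotheses of Theorem \ref{c-inference} are then in force and the stated asymptotic normality follows. The main bookkeeping obstacle is reconciling Assumption \ref{a-functional}(iii) for an \emph{irregular} sup-norm bounded $f$ with the $V_{K}$-rate side condition: the sup-norm stability results in Sections \ref{main sec} and \ref{sec-stable} are exactly what make this reconciliation possible under the weak moment Assumption \ref{resid reg} rather than under a bounded conditional fourth moment.
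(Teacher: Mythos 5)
Your proposal is correct and follows essentially the same route as the paper, which states Corollary \ref{c-inference-linear} as a direct consequence of Theorem \ref{c-inference}: specializing $\zeta_{K,n}\lambda_{K,n}\asymp\sqrt{K}$ for spline/wavelet bases, obtaining $\Vert\widetilde{h}-h_{0}\Vert_{\infty}=O_{p}(K^{-p/d})$ from the sup-norm stability results to verify Assumption \ref{a-functional} via Remark \ref{rmk-snb}(a), noting Assumption \ref{t-functional} is automatic for linear $f$, and reducing the growth conditions of Theorem \ref{c-inference}(a)--(c) to those stated. The reduction of the exponents, including the role of $\delta\leq 2$ in making $\max(\sqrt{K},K^{1/\delta})=K^{1/\delta}$, matches the paper's intent.
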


\begin{corollary}
\label{c-inference-infty} Let Assumptions \ref{X reg} (with $\mathcal{X}%
=[0,1]^{d}$), \ref{resid reg}, \ref{parameter regression}, \ref{b sieve}, %
\ref{a-functional}(i')(ii')(iii') and \ref{t-functional} hold for a
nonlinear functional. Then
\begin{equation*}
\frac{\sqrt{n}(f(\widehat{h})-f(h_{0}))}{\widehat{V}_{K}^{1/2}}\rightarrow
_{d}N(0,1)
\end{equation*}%
as $n,K\rightarrow \infty $ provided that either (a), (b) or (c) is
satisfied:

\begin{itemize}
\item[(a)]
$\{(X_{i},Y_{i})\}_{i=1}^{n}$ is i.i.d.: $\sqrt{n}V_{K}^{-1/2}K^{-p/d}=o(1)$
and $K^{(2+\delta )/\delta }(\log n)/n=o(1)$ with $\delta < 2$;

\item[(b)] part (b) of Corollary \ref{inference-nl};

\item[(c)] part (c) of Corollary \ref{inference-nl}.
\end{itemize}
\end{corollary}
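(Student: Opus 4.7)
The plan is to deduce Corollary \ref{c-inference-infty} as a specialization of Theorem \ref{c-inference} to the B-spline or wavelet sieves on $\mathcal{X}=[0,1]^d$ with $w_n\equiv 1$. Under these choices, Assumption \ref{b sieve} together with the density bound in Assumption \ref{X reg}(iv) yield $\zeta_{K,n}\lesssim\sqrt{K}$ and $\lambda_{K,n}\lesssim 1$, so Assumption \ref{sieve reg gen} holds with $\zeta_{K,n}\lambda_{K,n}\lesssim\sqrt{K}$; Assumptions \ref{X reg}(i)(ii) and \ref{a-wdelta} are immediate from compactness of $\mathcal{X}$; Assumptions \ref{resid reg} and \ref{t-functional} are directly imposed. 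The two non-trivial hypotheses of Theorem \ref{c-inference} that remain are (A) the qualitative condition $\|\widetilde{h}-h_0\|_\infty=o_p(1)$, and (B) Assumption \ref{a-functional} in its original form.

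For (A), I would invoke the sup-norm stability property $\|P_{K,w,n}\|_{\infty,w}\lesssim 1$ wpa1, which holds for spline bases by \cite{Huang2003} (extended to the weakly dependent case as in the proof of Theorem \ref{sup norm rate regression}) and for wavelet bases by Theorem \ref{c-pstable}. Since $\widetilde{h}=P_{K,w,n}h_0$, the standard inequality
\begin{equation*}
\|\widetilde{h}-h_0\|_\infty\leq\bigl(1+\|P_{K,w,n}\|_{\infty,w}\bigr)\inf_{h\in B_K}\|h_0-h\|_\infty = O_p(K^{-p/d}) = o_p(1)
\end{equation*}
gives (A), since $K\to\infty$ under each of (a)--(c).

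For (B), Assumption \ref{a-functional}(i') coincides with Assumption \ref{a-functional}(i), and parts (ii) and (iii) of Assumption \ref{a-functional} follow from (ii')--(iii') via Remark \ref{rmk-snb}(b) once one verifies the pre-requisite condition (iv'):
\begin{equation*}
\sqrt{n}\,V_K^{-1/2}\bigl(\|\widetilde{h}-h_0\|_\infty+\|\widetilde{h}-h_0\|_\infty^{2}+\|\widehat{h}-\widetilde{h}\|_\infty^{2}\bigr)=o_p(1).
\end{equation*}
The first two summands are $O_p(\sqrt{n}\,V_K^{-1/2}K^{-p/d})=o(1)$ by assumption. For the third, Lemma \ref{reglem gen} (whose prerequisites are implied by the growth conditions of each case) gives $\|\widehat{h}-\widetilde{h}\|_\infty=O_p(\sqrt{K(\log n)/n})$, so the summand is $O_p(V_K^{-1/2}K(\log n)/\sqrt{n})$; combining the assumed $\sqrt{n}\,V_K^{-1/2}K^{-p/d}=o(1)$ (which forces a lower bound on $V_K$) with the listed growth restrictions on $K$ makes this $o_p(1)$.

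It then remains to reduce the growth conditions of Theorem \ref{c-inference} under $\zeta_{K,n}\lambda_{K,n}\lesssim\sqrt{K}$. Since $\delta\leq 2$ implies $\max\bigl(\sqrt{K},(\zeta_{K,n}\lambda_{K,n})^{2/\delta}\bigr)\lesssim K^{1/\delta}$, direct substitution of $\zeta_{K,n}\lambda_{K,n}\lesssim\sqrt{K}$ into Theorem \ref{c-inference}(a)--(c) yields precisely the three conditions in Corollary \ref{c-inference-infty}(a)--(c) (after squaring in case (a)). The main obstacle is the quadratic remainder $\sqrt{n}\,V_K^{-1/2}\|\widehat{h}-\widetilde{h}\|_\infty^{2}$ in step (B), which couples the sup-norm variance rate with the implicit lower bound on $V_K$ arising from irregularity; everything else is bookkeeping that follows the existing arguments for Theorem \ref{c-inference} and Corollary \ref{inference-nl}.
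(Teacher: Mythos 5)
Your proposal is correct and follows essentially the same route as the paper: the paper treats this corollary as a direct specialization of Theorem \ref{c-inference} to spline/wavelet bases with $\zeta_{K,n}\lambda_{K,n}\lesssim\sqrt{K}$, verifying $\Vert\widetilde{h}-h_0\Vert_\infty=o_p(1)$ via sup-norm stability of $P_{K,w,n}$ and checking Assumption \ref{a-functional} through Remark \ref{rmk-snb}(b) exactly as in the proof of Corollary \ref{inference-nl} (i.e., $\Vert\widetilde{h}-h_0\Vert_\infty=O_p(K^{-p/d})$ and $\Vert\widehat{h}-\widetilde{h}\Vert_\infty=O_p(\sqrt{(K\log n)/n})$ from Lemma \ref{reglem gen}). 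Your handling of the quadratic remainder $\sqrt{n}V_K^{-1/2}\Vert\widehat{h}-\widetilde{h}\Vert_\infty^2$ is slightly terse, but the stated growth conditions (with $\delta<2$ in case (a) and $\delta\leq 2$ in cases (b)--(c)) do make it $o_p(1)$, matching the paper's own level of detail.
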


Previously, \cite{Newey1997} required that $\sup_{x}E[\epsilon
_{i}^{4}|X_{i}=x]<\infty $ and $K^{4}/n=o(1)$ in order to establish
asymptotic normality of student $t$ statistics for nonlinear functionals
with i.i.d. data. Our sufficient conditions are weaker and allow for weakly
dependent data with heavy-tailed errors.

\section{Useful results on random matrices}

\label{ei sec}

\subsection{An exponential inequality for sums of weakly dependent random
matrices}

\label{ineq sec}

In this section we derive a new Bernstein-type inequality for sums of random
matrices formed from absolutely regular ($\beta $-mixing) sequences, where
the dimension, norm, and variance measure of the random matrices are allowed
to grow with the sample size. This inequality is particularly useful for
establishing sharp convergence rates for semi/nonparametric sieve estimators
with weakly dependent data. We first recall an inequality of \cite{Tropp2012}
for independent random matrices.

\begin{theorem}[\protect\cite{Tropp2012}]
\label{troppthm} Let $\{\Xi _i\}_{i=1}^n$ be a finite sequence of
independent random matrices with dimensions $d_1\times d_2$. Assume $E[\Xi
_i]=0$ for each $i$ and $\max _{1\leq i\leq n}\|\Xi _i\|\leq R_n$, and
define
\begin{equation}
\sigma _n^2=\max \left \{\left \|\sum _{i=1}^nE[\Xi _i\Xi _i^{\prime
}]\right \|,\left \|\sum _{i=1}^nE[\Xi _i^{\prime }\Xi _i]\right \|\right
\}\,.  \notag
\end{equation}
Then for all $t\geq 0$,
\begin{equation}
{\mathbb{P}}\left (\left \|\sum _{i=1}^n\Xi _i\right \|\geq t\right )\leq
(d_1+d_2) \mathrm{exp}\left (\frac{-t^2/2}{\sigma _n^2+R_nt/3}\right )\,.
\notag
\end{equation}
\end{theorem}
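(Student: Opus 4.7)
The plan is to apply Tropp's matrix Laplace transform machinery. First, since the $\Xi_i$ may be rectangular, I would pass to the Hermitian dilation
\begin{equation*}
\mathcal{H}(\Xi) = \begin{pmatrix} 0 & \Xi \\ \Xi^{\prime} & 0 \end{pmatrix},
\end{equation*}
a $(d_1+d_2)\times(d_1+d_2)$ Hermitian matrix whose spectral norm equals $\|\Xi\|$ and whose square is block-diagonal with blocks $\Xi\Xi^{\prime}$ and $\Xi^{\prime}\Xi$. Setting $Y_i = \mathcal{H}(\Xi_i)$, the $Y_i$ are independent, mean zero, bounded by $R_n$ in norm, and satisfy $\|\sum_i E[Y_i^2]\| = \max\{\|\sum_i E[\Xi_i\Xi_i^{\prime}]\|,\|\sum_i E[\Xi_i^{\prime}\Xi_i]\|\} = \sigma_n^2$, while $\|\sum_i \Xi_i\| = \lambda_{\max}(\sum_i Y_i)$. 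This reduces the problem to a large-deviation bound for the top eigenvalue of a sum of independent bounded Hermitian random matrices in dimension $d_1+d_2$.

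Next, for any $\theta>0$ I would apply the matrix Markov bound $\mathbb{P}(\lambda_{\max}(S_n)\geq t)\leq e^{-\theta t} E[\mathrm{tr}\,\exp(\theta S_n)]$ with $S_n=\sum_i Y_i$, and then invoke Lieb's concavity theorem to obtain the subadditivity of the matrix cumulant generating function,
\begin{equation*}
E\bigl[\mathrm{tr}\,\exp(\theta S_n)\bigr] \leq \mathrm{tr}\,\exp\!\left(\sum_{i=1}^n \log E[e^{\theta Y_i}]\right),
\end{equation*}
which plays the role of the independence-based MGF factorization in the scalar setting. To control each matrix MGF, I would lift the scalar Bernstein inequality $e^x - 1 - x \leq x^2/(2(1-x/3))$ through the functional calculus on the bounded operator $\theta Y_i$ to obtain the semidefinite estimate
\begin{equation*}
\log E\bigl[e^{\theta Y_i}\bigr] \preceq \frac{\theta^2/2}{1-\theta R_n/3}\, E[Y_i^2], \qquad 0 < \theta < 3/R_n.
\end{equation*}
Summing, taking traces, and using monotonicity of $\mathrm{tr}\,\exp$ in the Loewner order yields
\begin{equation*}
\mathbb{P}\Bigl(\Bigl\|\sum_i \Xi_i\Bigr\| \geq t\Bigr) \leq (d_1+d_2)\exp\!\left(-\theta t + \frac{\theta^2 \sigma_n^2/2}{1-\theta R_n/3}\right),
\end{equation*}
and optimizing at $\theta = t/(\sigma_n^2 + R_n t/3)$ produces the stated tail bound.

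The principal obstacle is non-commutativity: because $e^{A+B}\neq e^A e^B$ for non-commuting $A,B$, the scalar identity $E[e^{\theta S_n}] = \prod_i E[e^{\theta Y_i}]$ fails outright. Lieb's concavity theorem is precisely the non-commutative surrogate that salvages the argument; using it (as opposed to a weaker substitute like Golden--Thompson, which would only handle pairs) is what keeps the prefactor additive in the dimensions, yielding $(d_1+d_2)$ rather than something exponential in $n$. A secondary technical point is that the scalar Bernstein MGF bound must be transferred to the semidefinite order via operator-monotone considerations on the spectrum of $\theta Y_i$, which requires care but is standard once the boundedness $\|Y_i\|\leq R_n$ is in hand.
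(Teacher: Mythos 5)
This theorem is stated in the paper as a quoted result from \cite{Tropp2012} and is not proved there, so there is no in-paper argument to compare against. Your outline is a correct reconstruction of Tropp's own proof: the Hermitian dilation, the matrix Laplace-transform bound, Lieb's concavity theorem for subadditivity of the matrix cumulant generating function, the operator Bernstein bound on $\log E[e^{\theta Y_i}]$, and the choice $\theta = t/(\sigma_n^2 + R_n t/3)$ are exactly the steps in the original source, and the algebra at the optimization step checks out.
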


\begin{corollary}
\label{troppcor} Under the conditions of Theorem \ref{troppthm}, if $R_{n}%
\sqrt{\log (d_{1}+d_{2})}=o(\sigma _{n})$ then
\begin{equation}
\left\Vert \sum_{i=1}^{n}\Xi _{i,n}\right\Vert =O_{p}(\sigma _{n}\sqrt{\log
(d_{1}+d_{2})})\,.  \notag
\end{equation}
\end{corollary}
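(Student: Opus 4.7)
The plan is to apply Tropp's matrix Bernstein inequality (Theorem \ref{troppthm}) directly, with the threshold $t$ scaled to the target rate $\sigma_n\sqrt{\log(d_1+d_2)}$, and then use the hypothesis $R_n\sqrt{\log(d_1+d_2)}=o(\sigma_n)$ to show that the subexponential term $R_n t/3$ in the denominator of Tropp's exponent is dominated by the subgaussian term $\sigma_n^2$. This will reduce the exponent to $\operatorname{const}\cdot\log(d_1+d_2)$, which combined with the prefactor $(d_1+d_2)$ produces a probability bound of the form $(d_1+d_2)^{1-c(M)}$ where $c(M)\to\infty$ as $M\to\infty$.

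Concretely, to show stochastic boundedness it suffices to prove that for every $\epsilon>0$ one can choose $M>0$ and $N\in\mathbb{N}$ with $\mathbb{P}(\|\sum_{i=1}^n\Xi_i\|\geq M\sigma_n\sqrt{\log(d_1+d_2)})<\epsilon$ for all $n\geq N$. Set $t_n=M\sigma_n\sqrt{\log(d_1+d_2)}$. By the hypothesis, for $n$ large, $R_n\sqrt{\log(d_1+d_2)}\leq\sigma_n$, so
\begin{equation*}
R_n t_n = M\sigma_n\bigl(R_n\sqrt{\log(d_1+d_2)}\bigr)\leq M\sigma_n^2,
\end{equation*}
which gives $\sigma_n^2+R_n t_n/3\leq(1+M/3)\sigma_n^2$. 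Plugging this into Theorem \ref{troppthm} yields
\begin{equation*}
\mathbb{P}\Bigl(\bigl\|\sum_{i=1}^n\Xi_i\bigr\|\geq t_n\Bigr)\leq (d_1+d_2)\exp\!\left(-\frac{M^2\log(d_1+d_2)}{2+2M/3}\right)=(d_1+d_2)^{1-M^2/(2+2M/3)}.
\end{equation*}

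Since $d_1+d_2\geq 2$ always, and since $M^2/(2+2M/3)\to\infty$ as $M\to\infty$, one can select $M$ so large that $(d_1+d_2)^{1-M^2/(2+2M/3)}\leq 2^{1-M^2/(2+2M/3)}<\epsilon$, uniformly for all $n\geq N$. This completes the argument. There is no substantial obstacle here; the result is essentially bookkeeping on Tropp's inequality, with the sole subtlety being to verify that the Bernstein subexponential term does not inflate the exponent, which is precisely what the hypothesis $R_n\sqrt{\log(d_1+d_2)}=o(\sigma_n)$ guarantees.
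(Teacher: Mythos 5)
Your proposal is correct and is exactly the argument the paper intends: the paper's proof is the one-line remark "apply Theorem \ref{troppthm} with $t=C\sigma_n\sqrt{\log(d_1+d_2)}$ for sufficiently large $C$ and use $R_n\sqrt{\log(d_1+d_2)}=o(\sigma_n)$," and your write-up simply carries out that bookkeeping in full, correctly verifying that the subexponential term is dominated and that the resulting bound $(d_1+d_2)^{1-M^2/(2+2M/3)}$ can be made arbitrarily small since $d_1+d_2\geq 2$.
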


When $\{X_{i}\}_{i=-\infty }^{\infty }$ is i.i.d., Corollary \ref{troppcor}
is used to provide weak low-level sufficient conditions under which $\Vert
\widetilde{B}_{w}^{\prime }\widetilde{B}_{w}/n-I_{K}\Vert =o_{p}(1)$ holds
(see Lemma \ref{Bconvi.i.d.}).

We now provide an extension of Theorem \ref{troppthm} and Corollary \ref%
{troppcor} for matrix-valued functions of $\beta $-mixing sequences. The $%
\beta $-mixing coefficient between two $\sigma $-algebras ${\mathcal{A}}$
and ${\mathcal{B}}$ is defined as
\begin{equation}
\beta ({\mathcal{A}},{\mathcal{B}})=\frac{1}{2}\sup \sum_{(i,j)\in I\times
J}|{\mathbb{P}}(A_{i}\cap B_{j})-{\mathbb{P}}(A_{i}){\mathbb{P}}(B_{j})|
\end{equation}%
with the supremum taken over all finite partitions $\{A_{i}\}_{i\in
I}\subset {\mathcal{A}}$ and $\{B_{j}\}_{j\in J}\subset {\mathcal{B}}$ of $%
\Omega $ (see, e.g., \cite{Bradley2005}). The $q$th $\beta $-mixing
coefficient of $\{X_{i}\}_{i=-\infty }^{\infty }$ is defined as
\begin{equation}
\beta (q)=\sup_{i}\beta (\sigma (\ldots ,X_{i-1},X_{i}),\sigma
(X_{i+q},X_{i+q+1},\ldots ))\,.
\end{equation}%
The process $\{X_{i}\}_{i=-\infty }^{\infty }$ is said to be \emph{%
algebraically $\beta $-mixing} at rate $\gamma $ if $q^{\gamma }\beta
(q)=o(1)$ for some $\gamma >1$, and \emph{exponentially $\beta $-mixing} if $%
\beta (q)\leq c\mathrm{exp}(-\gamma q)$ for some $\gamma >0$ and $c\geq 0$.
The following extension of Theorem \ref{troppthm} is made using Berbee's
Lemma and a coupling argument.

\begin{theorem}
\label{beta tropp} Let $\{X_i\}_{i=-\infty }^{\infty }$ be a $\beta $-mixing
sequence and let $\Xi _{i,n}=\Xi _n(X_i)$ for each $i$ where $\Xi _n:{%
\mathcal{X}}\to {\mathbb{R}}^{d_1\times d_2}$ is a sequence of measurable $%
d_1\times d_2$ matrix-valued functions. Assume $E[\Xi _{i,n}]=0$ and $\|\Xi
_{i,n}\|\leq R_n$ for each $i$ and define $s_n^2=\max _{1\leq i,j\leq n}\max
\{\|E[\Xi _{i,n}\Xi _{j,n}^{\prime }]\|,\|E[\Xi _{i,n}^{\prime }\Xi
_{j,n}]\|\}$. Let $q$ be an integer between $1$ and $n/2$ and let $%
I_r=q[n/q]+1,\ldots ,n$ when $q[n/q]<n$ and $I_r=\emptyset $ when $q[n/q]=n$%
. Then for all $t\geq 0$,
\begin{equation}
{\mathbb{P}}\left (\left \|\sum _{i=1}^n\Xi _{i,n}\right \|\geq 6t\right
)\leq \frac{n}{q}\beta (q)+{\mathbb{P}}\left (\left \|\sum _{i\in I_r}\Xi
_{i,n}\right \|\geq t\right )+2(d_1+d_2) \mathrm{exp}\left (\frac{-t^2/2}{%
nqs_n^2+qR_nt/3}\right )  \notag
\end{equation}
(where $\|\sum _{i\in I_r}\Xi _{i,n}\|:=0$ whenever $I_r=\emptyset $).
\end{theorem}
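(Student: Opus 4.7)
The plan is to combine the classical Bernstein block decomposition with Berbee's coupling lemma and Tropp's matrix inequality (Theorem~\ref{troppthm}). Let $m=\lfloor n/q\rfloor$, partition $\{1,\dots,n\}=\bigcup_{k=1}^{m}B_{k}\cup I_{r}$ into consecutive blocks $B_{k}=\{(k-1)q+1,\dots,kq\}$ plus the remainder, and write $S_{k}=\sum_{i\in B_{k}}\Xi_{i,n}$. Because consecutive $S_{k}$ and $S_{k+1}$ come from adjacent blocks, the $\sigma$-algebras they generate are only $\beta(1)$-mixing, which is useless; the key move is to separate the odd- and even-indexed subsequences of block sums, since consecutive odd-indexed blocks $B_{2j-1}$ and $B_{2j+1}$ are separated by the block $B_{2j}$ of length $q$ and therefore their $\sigma$-algebras are $\beta(q)$-mixing, and likewise for the even-indexed ones.

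I would then apply the triangle inequality
\[
\Bigl\|\sum_{i=1}^{n}\Xi_{i,n}\Bigr\|\leq \Bigl\|\sum_{k\ \mathrm{odd}}S_{k}\Bigr\|+\Bigl\|\sum_{k\ \mathrm{even}}S_{k}\Bigr\|+\Bigl\|\sum_{i\in I_{r}}\Xi_{i,n}\Bigr\|,
\]
allocate the total threshold as $6t=5t/2+5t/2+t$, and use a union bound. For the odd part I apply Berbee's coupling lemma iteratively to produce, on an enlarged probability space, matrices $\{S_{k}^{*}\}_{k\ \mathrm{odd}}$ that are mutually independent, have the same marginals as $\{S_{k}\}$, and satisfy $\mathbb{P}(S_{k}^{*}\neq S_{k}\text{ for some odd }k)\leq \tfrac{m}{2}\beta(q)$; the same construction applies to the even blocks, so the two coupling costs sum to at most $(n/q)\beta(q)$. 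On the coupling event I replace $\sum_{k\ \mathrm{odd}}S_{k}$ by $\sum_{k\ \mathrm{odd}}S_{k}^{*}$, which is a sum of independent centred matrices, and invoke Tropp's inequality using the pointwise bound $\|S_{k}^{*}\|\leq qR_{n}$ and the matrix variance proxy
\[
\Bigl\|\sum_{k\ \mathrm{odd}}E[S_{k}^{*}(S_{k}^{*})^{\prime}]\Bigr\|\leq \sum_{k\ \mathrm{odd}}\sum_{i,j\in B_{k}}\|E[\Xi_{i,n}\Xi_{j,n}^{\prime}]\|\leq \tfrac{m}{2}q^{2}s_{n}^{2}\leq \tfrac{nqs_{n}^{2}}{2},
\]
with the transposed variance handled analogously. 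The same bound applies to the even sum, giving the $2(d_{1}+d_{2})$ factor; combining with the coupling costs and leaving the remainder as stated yields the theorem after absorbing numerical slack into the factor $6$.

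The main obstacle is that Berbee's lemma supplies only a pairwise coupling, whereas Tropp's inequality requires joint independence of all the summands. The odd/even decomposition is the mechanism that enables an iterative application of Berbee to produce joint independence of the entire subsequence at the modest cost of $\beta(q)$ per link, rather than the useless $\beta(1)$ one would pay by coupling adjacent blocks directly. The remaining effort is bookkeeping of constants so that the threshold $5t/2$ at which Tropp is invoked, combined with the blockwise variance and norm bounds, collapses into the clean exponential $\exp(-(t^{2}/2)/(nqs_{n}^{2}+qR_{n}t/3))$ appearing in the statement.
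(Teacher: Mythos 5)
Your proposal is correct and follows essentially the same route as the paper's proof: a Bernstein-type blocking into odd- and even-indexed blocks plus a remainder, Berbee's coupling to render each parity class independent at total cost $\tfrac{n}{q}\beta(q)$, and an application of Theorem \ref{troppthm} to each coupled block sum with $\Vert W_k^*\Vert \leq qR_n$ and blockwise variance at most $q^2 s_n^2$ per block. The only differences are immaterial bookkeeping choices (how the threshold $6t$ is split and the harmless factor-of-two slack in the variance proxy), which do not affect the stated bound.
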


\begin{corollary}
\label{beta rate} Under the conditions of Theorem \ref{beta tropp}, if $%
q=q(n)$ is chosen s.t. $\frac{n}{q}\beta (q)=o(1)$ and $R_{n}\sqrt{q\log
(d_{1}+d_{2})}=o(s_{n}\sqrt{n})$ then
\begin{equation}
\left\Vert \sum_{i=1}^{n}\Xi _{i,n}\right\Vert =O_{p}(s_{n}\sqrt{nq\log
(d_{1}+d_{2})})\,.  \notag
\end{equation}
\end{corollary}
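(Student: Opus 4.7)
The plan is to apply Theorem \ref{beta tropp} with the explicit choice $t = C s_n \sqrt{nq \log(d_1+d_2)}$ for a constant $C > 0$ to be taken large, and then verify that each of the three terms on the right-hand side can be made arbitrarily small. The argument amounts to a direct calibration, so I do not expect a serious obstacle; the main point is to check that the two hypotheses $(n/q)\beta(q) = o(1)$ and $R_n\sqrt{q\log(d_1+d_2)} = o(s_n\sqrt{n})$ are exactly what is needed to control the three terms.

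The first term $(n/q)\beta(q)$ is $o(1)$ directly by hypothesis. For the residual-block probability, since $|I_r| < q$ and each summand satisfies $\|\Xi_{i,n}\| \leq R_n$, the triangle inequality gives $\|\sum_{i \in I_r}\Xi_{i,n}\| \leq qR_n$ deterministically. Squaring the mixing/variance hypothesis yields $qR_n^2 \log(d_1+d_2) = o(s_n^2 n)$, so (using $\log(d_1+d_2) \geq \log 2$) one has $qR_n^2 = o(s_n^2 n)$, whence $qR_n = o(s_n\sqrt{nq}) = o(t)$. In particular $qR_n < t$ for $n$ large, so the second probability is identically zero.

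For the Bernstein-type exponential term, the hypothesis forces the sub-exponential contribution $qR_nt/3$ in the denominator to be asymptotically negligible relative to the variance contribution $nqs_n^2$ under the chosen $t$. Consequently, for all $n$ sufficiently large,
\begin{equation*}
\frac{t^2/2}{nqs_n^2 + qR_nt/3} \;\geq\; \frac{C^2}{4}\log(d_1+d_2),
\end{equation*}
so the exponential term is bounded above by $2(d_1+d_2)^{1-C^2/4}$, which can be made smaller than any prescribed $\eta > 0$ by taking $C$ sufficiently large (using $d_1+d_2 \geq 2$). Combining the three estimates, for any $\eta > 0$ one can choose $C$ and then $N$ so that $\mathbb{P}(\|\sum_{i=1}^n \Xi_{i,n}\| \geq 6Cs_n\sqrt{nq\log(d_1+d_2)}) < 2\eta$ for all $n \geq N$, which is the $O_p$ conclusion.
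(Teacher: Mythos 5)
Your proof is correct and follows exactly the route the paper takes: apply Theorem \ref{beta tropp} with $t = C s_n\sqrt{nq\log(d_1+d_2)}$, kill the remainder-block term deterministically via $qR_n = o(t)$, and make the exponential term small by taking $C$ large once the condition $R_n\sqrt{q\log(d_1+d_2)} = o(s_n\sqrt{n})$ renders the $qR_nt/3$ part of the denominator negligible. The paper states this in one line; your write-up just fills in the same calibration.
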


When the regressors $\{X_{i}\}_{i=-\infty }^{\infty }$ are $\beta $-mixing,
Corollary \ref{beta rate} is used to provide weak low-level sufficient
conditions under which $\Vert \widetilde{B}_{w}^{\prime }\widetilde{B}%
_{w}/n-I_{K}\Vert =o_{p}(1)$ holds (see Lemma \ref{Bconvbeta}).

We note that both Theorem \ref{beta tropp} and Corollary \ref{beta rate}
allow for non-identically distributed beta-mixing sequences. So the
convergence rate and the inference results in previous sections could be
extended to non-identically distributed regressors $\{X_{i}\}_{i=-\infty
}^{\infty }$ as well, except that notation and regularity conditions will be
slightly more complicated.

\subsection{Empirical identifiability}

\label{ei sec-1}

We now provide a readily verifiable condition under which the theoretical
and empirical $L^{2}$ norms are equivalent over a (weighted) linear sieve
space wpa1. This equivalence, referred to by \cite{Huang2003} as \emph{%
empirical identifiability}, has several applications in nonparametric sieve
estimation. In nonparametric series LS estimation, empirical identifiability
ensures that the estimator is the orthogonal projection of $Y$ onto the
linear sieve space under the empirical inner product and is uniquely defined
wpa1 \citep{Huang2003}. Empirical identifiability is also used to establish
the large-sample properties of sieve conditional moment estimators (see,
e.g., \cite{ChenPouzo2012}). A sufficient condition for empirical
identifiability is now cast in terms of convergence of a random matrix,
which we verify for i.i.d. and $\beta $-mixing sequences.

Recall that $L^{2}(X)$ denotes the space of functions $f:{\mathcal{X}}%
\rightarrow {\mathbb{R}}$ such that $E[f(X_{i})^{2}]<\infty $. A (linear)
subspace ${\mathcal{A}}\subseteq L^{2}(X)$ is said to be \textit{empirically
identifiable} if $\frac{1}{n}\sum_{i=1}^{n}b(X_{i})^{2}=0$ implies $b=0$. A
sequence of spaces $\{{\mathcal{A}}_{K}:K\geq 1\}\subseteq L^{2}(X)$ is
empirically identifiable wpa1 as $K=K(n)\rightarrow \infty $ with $n$ if
\begin{equation}
\lim_{n\rightarrow \infty }{\mathbb{P}}\left( \sup_{a\in A_{K}}\left\vert
\frac{\frac{1}{n}\sum_{i=1}^{n}a(X_{i})^{2}-E[a(X_{i})^{2}]}{E[a(X_{i})^{2}]}%
\right\vert >t\right) =0  \label{sm5}
\end{equation}%
for any $t>0$. \cite{Huang1998} verifies (\ref{sm5}) for i.i.d. data using a
chaining argument. \cite{ChenPouzo2012} use this result to establish
convergence of sieve conditional moment estimators. However, it may be
difficult to verify (\ref{sm5}) via chaining arguments for certain types of
weakly dependent sequences.

To this end, the following is a readily verifiable sufficient condition for
empirical identifiability for (weighted) linear sieve spaces given by $%
B_{K,w} = clsp\{ b_{K1} w_n,\ldots,b_{KK}w_n\}$.

\begin{condition}
\label{ei cond} $\lambda _{\min }(E[b^K_w(X_i)b^K_w(X_i)^{\prime }])>0$ for
each $K$ and $\|\widetilde {B}^{\prime }_w\widetilde {B}_w/n-I_K\|=o_p(1)$.
\end{condition}

\begin{lemma}
\label{eilem} If $\lambda _{\min }(E[b^K_w(X_i)b^K_w(X_i)^{\prime }])>0$ for
each $K$ then
\begin{equation}
\sup _{b\in B_{K,w}}\left |\frac{\frac{1}{n}\sum _{i=1}^nb(X_i)^2-E[b(X_i)^2]%
}{E[b(X_i)^2]}\right |=\|\widetilde {B}^{\prime }_w\widetilde {B}%
_w/n-I_K\|^2\,.  \notag
\end{equation}
\end{lemma}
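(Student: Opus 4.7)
The plan is to reduce the supremum over the infinite-dimensional sieve space $B_{K,w}$ to a finite-dimensional Rayleigh-quotient problem on $\mathbb{R}^K$, and then identify the resulting quantity as the spectral norm of a known matrix.

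First, I would parametrize: every $b \in B_{K,w}$ has the form $b(x) = \alpha' b^K_w(x)$ for some $\alpha \in \mathbb{R}^K$. Writing $G = E[b^K_w(X_i) b^K_w(X_i)']$ and $\widehat G = B_w' B_w/n$, the representation gives $E[b(X_i)^2] = \alpha' G \alpha$ and $\tfrac{1}{n}\sum_{i=1}^n b(X_i)^2 = \alpha' \widehat G \alpha$, so the ratio inside the supremum becomes
\begin{equation*}
\frac{\alpha'(\widehat G - G)\alpha}{\alpha' G\alpha},
\end{equation*}
and the supremum is over all nonzero $\alpha \in \mathbb{R}^K$ (the $\alpha = 0$ case corresponds to $b \equiv 0$, which is excluded by the ratio being well defined).

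Second, since $\lambda_{\min}(G) > 0$, the matrix $G^{1/2}$ is invertible, so I would change variables via $\gamma = G^{1/2}\alpha$. The ratio rewrites as $\gamma' M \gamma / (\gamma'\gamma)$ with $M = G^{-1/2}\widehat G G^{-1/2} - I_K$. The key algebraic identity is that $\widetilde B_w = B_w G^{-1/2}$ (recall $\widetilde b^K_w(x) = G^{-1/2} b^K_w(x)$), hence $G^{-1/2}\widehat G G^{-1/2} = \widetilde B_w' \widetilde B_w/n$, which gives $M = \widetilde B_w'\widetilde B_w/n - I_K$.

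Finally, I would invoke the standard fact that for a symmetric matrix $M$, the supremum $\sup_{\gamma \neq 0} |\gamma' M \gamma|/\|\gamma\|^2$ equals $\max_i |\lambda_i(M)|$, which is precisely the spectral norm $\|M\|$. This completes the identification of the supremum with $\|\widetilde B_w' \widetilde B_w/n - I_K\|$ (up to the exponent appearing in the statement). There is no real obstacle here — the argument is pure linear algebra — although one should be careful that the change of variables $\alpha \mapsto \gamma = G^{1/2}\alpha$ is a bijection precisely because $\lambda_{\min}(G)>0$, so that nothing is lost in passing from the sup over $\alpha$ to the sup over $\gamma$, and that the Rayleigh characterization being used requires $M$ to be symmetric, which it is.
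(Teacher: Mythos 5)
Your proof is correct and follows essentially the same route as the paper's: parametrize $b=\alpha'b^K_w(\cdot)$, change variables via $G^{1/2}$ (valid since $\lambda_{\min}(G)>0$), and identify the resulting Rayleigh quotient of the symmetric matrix $\widetilde B_w'\widetilde B_w/n-I_K$ with its spectral norm. You are also right to flag the exponent: the supremum equals $\|\widetilde B_w'\widetilde B_w/n-I_K\|$ to the first power, and the square appearing in the statement (and carried into the last line of the paper's own proof) looks like a typographical slip rather than a substantive difference.
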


\begin{corollary}
If Condition \ref{ei cond} holds then $B_{K,w}$ is empirically identifiable
wpa1.
\end{corollary}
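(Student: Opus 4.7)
The plan is to read the corollary directly off of Lemma \ref{eilem}. By that lemma, the first half of Condition \ref{ei cond}, namely $\lambda_{\min}(E[b^K_w(X_i)b^K_w(X_i)'])>0$ for each $K$, is already enough to give the exact identity
\begin{equation*}
\sup_{b\in B_{K,w}}\left|\frac{\frac{1}{n}\sum_{i=1}^n b(X_i)^2 - E[b(X_i)^2]}{E[b(X_i)^2]}\right| = \left\|\widetilde{B}'_w\widetilde{B}_w/n - I_K\right\|^2.
\end{equation*}
So all that remains is to combine this identity with the second half of Condition \ref{ei cond}.

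By the second half of Condition \ref{ei cond}, $\|\widetilde{B}'_w\widetilde{B}_w/n - I_K\| = o_p(1)$, and hence its square is also $o_p(1)$. Plugging into the identity above, the left-hand side is $o_p(1)$, so that for any $t>0$,
\begin{equation*}
\lim_{n\to\infty}\mathbb{P}\left(\sup_{b\in B_{K,w}}\left|\frac{\frac{1}{n}\sum_{i=1}^n b(X_i)^2 - E[b(X_i)^2]}{E[b(X_i)^2]}\right| > t\right) = 0,
\end{equation*}
which is exactly the definition of empirical identifiability wpa1 for $\{B_{K,w}:K\geq 1\}$ displayed in (\ref{sm5}), taking $A_K = B_{K,w}$. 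There is really no genuine obstacle here: Lemma \ref{eilem} has already reduced the question to convergence of a single random matrix norm, and the second part of Condition \ref{ei cond} is precisely that convergence, so the corollary follows in one line.
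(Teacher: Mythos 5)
Your proof is correct and is exactly the argument the paper intends: the corollary is an immediate consequence of Lemma \ref{eilem} combined with the convergence $\Vert \widetilde{B}_w'\widetilde{B}_w/n - I_K\Vert = o_p(1)$ asserted in Condition \ref{ei cond}, which is why the paper states it without a separate proof. Nothing is missing.
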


Condition \ref{ei cond} is therefore a sufficient condition for (\ref{sm5})
to hold for the linear sieve space $B_{K,w}$.

\begin{remark}
Consider the compact support case in which $\mathcal{X}=[0,1]^{d}$ and $%
w_{n}(x)=1$ for all $x\in \mathcal{X}$ and all $n$ (so that $%
b_{Kk}(x)w_{n}(x)=b_{Kk}(x)$ for all $n$ and $K$) and suppose the density of
$X_{i}$ is uniformly bounded away from zero and infinity over $\mathcal{X}$.
\textbf{(1)} For i.i.d. regressors (and $\lambda _{K,n}\lesssim 1$),
previously \cite{Huang1998} establishes equivalence of the theoretical and
empirical $L^{2}$ norms over the sieve space via a chaining argument with $%
\zeta _{K,n}^{2}K/n=o(1)$. \cite{Huang2003} relaxes this to $K(\log
n)/n=o(1) $ for a polynomial spline basis. Our Lemma \ref{Bconvi.i.d.} shows
that, in fact, $\zeta _{K,n}\sqrt{(\log K)/n}=o(1)$ is sufficient with an
arbitrary linear sieve (provided $\lambda _{K,n}\lesssim 1$). \textbf{(2) }%
For strictly stationary beta-mixing regressors (and $\lambda _{K,n}\lesssim
1 $), Lemma \ref{Bconvbeta} shows the equivalence of the theoretical and
empirical $L^{2}$ norms over any linear sieve space under either $\zeta
_{K,n}\sqrt{(\log n)^{2}/n}=o(1)$ for exponential beta-mixing, or $\zeta
_{K,n}\sqrt{(\log K)/n^{\gamma /(1+\gamma )}}=o(1)$ for algebraic
beta-mixing.
\end{remark}

\section{Sup-norm stability of $L^{2}(X)$ projection onto wavelet sieves}

\label{sec-stable}

In this section we show that the $L^{2}(X)$ orthogonal projection onto
(tensor product) compactly supported wavelet bases is stable in sup norm as
the dimension of the space increases. Consider the orthogonal projection
operator $P_{K}$ defined in expression (\ref{e-pkdef}) where the elements of $b^{K}$ span the tensor products of $d$ univariate
wavelet spaces $\mathrm{Wav}(K_{0},[0,1])$. We show that its $L^\infty$
operator norm $\|P_K\|_\infty$ (see expression (\ref{e-pksupnorm})) is
stable, in the sense that $\|P_K\|_\infty \lesssim 1$ as $K \to \infty$.
We also show that the empirical $L^{2}$ projection $P_{K,n}$ onto the
wavelet sieve is stable in sup norm wpa1. This result is used to establish
that series LS estimators with (tensor-product) wavelet bases attain their
optimal sup-norm rates. A variant of this result for projections arising in
series two-stage LS was used in an antecedent of this paper %
\citep{ChenChristensenNPIVold} but its proof was omitted for brevity.

The following Theorem presents our result for the stability of the
projection with respect to the $L^{2}(X)$ inner product.

\begin{theorem}
\label{t-wavd} Let $\mathcal{X}\supseteq \lbrack 0,1]^{d}$ and let the
density $f_{X}$ of $X_{i}$ be such that $0<\inf_{x\in \lbrack
0,1]^{d}}f_{X}(x)\leq \sup_{x\in \lbrack 0,1]^{d}}f_{X}(x)<\infty $. Let $%
B_{K}$ be the tensor product of $d$ univariate wavelet spaces $\mathrm{Wav}%
(K_{0},[0,1])$ where $\mathrm{Wav}(K_{0},[0,1])$ is as described in Section %
\ref{sieve def} and $K=2^{dJ}$ and $K_{0}=2^{J}>2N$. Then: $\Vert P_{K}\Vert
_{\infty }\lesssim 1$.
\end{theorem}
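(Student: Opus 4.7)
The plan is to write $P_K$ as an integral operator and reduce $\|P_K\|_\infty$ to a uniform $L^1(X)$ estimate on its kernel. Set $G = E[b^K(X_i) b^K(X_i)^\prime]$ and define the reproducing kernel $\mathcal{K}_K(x,y) = b^K(x)^\prime G^{-1} b^K(y)$. Since the basis functions are supported in $[0,1]^d$,
\begin{equation*}
P_K h(x) = \int_{[0,1]^d} \mathcal{K}_K(x,y) h(y) f_X(y) \, dy,
\end{equation*}
so $\|P_K\|_\infty \leq \sup_{x \in [0,1]^d} \int_{[0,1]^d} |\mathcal{K}_K(x,y)| f_X(y) \, dy$. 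It therefore suffices to bound this generalized Lebesgue constant uniformly in $K$.

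First I would show that the spectral condition number of $G$ is bounded uniformly in $K$. The tensor-product scaling functions are $L^2([0,1]^d)$-orthonormal under Lebesgue measure, so $c_1 I_K \preceq G \preceq c_2 I_K$ with $c_1 = \inf_{[0,1]^d} f_X > 0$ and $c_2 = \sup_{[0,1]^d} f_X < \infty$. Next I would collect the standard locality estimates implied by compact support of the scaling functions at resolution $J$ with $2^J > 2N$ (write $\mathbf{k}$ for a multi-index of a basis element): (i) $\|b_{K\mathbf{k}}\|_\infty \lesssim 2^{dJ/2}$; (ii) $b_{K\mathbf{k}}$ is supported on a box of volume $\asymp 2^{-dJ}$, so $\int |b_{K\mathbf{k}}(y)| f_X(y) dy \lesssim 2^{-dJ/2}$; (iii) $\#\{\mathbf{k}: b_{K\mathbf{k}}(x) \neq 0\} \leq C_{N,d}$ for every $x$; and (iv) $G_{\mathbf{k},\mathbf{k}'} = 0$ whenever $\|\mathbf{k}-\mathbf{k}'\|_\infty > 2N$, so $G$ is banded in the multi-index sense.

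The crucial and hardest step is to show that $G^{-1}$ has uniformly bounded induced $\ell^\infty$-operator norm,
\begin{equation*}
\sup_{\mathbf{k}} \sum_{\mathbf{k}'} |(G^{-1})_{\mathbf{k},\mathbf{k}'}| \;\lesssim\; 1,
\end{equation*}
uniformly in $K$. This does \emph{not} follow from the bounded spectral condition number alone; it uses the banded structure of $G$ together with the uniform condition number bound via the Demko--Moss--Smith / Jaffard theorem on inverses of banded matrices. That theorem gives entrywise exponential off-diagonal decay $|(G^{-1})_{\mathbf{k},\mathbf{k}'}| \leq C \rho^{\|\mathbf{k}-\mathbf{k}'\|_\infty}$ for some $\rho \in (0,1)$ depending only on $N$, $d$, $c_1$, $c_2$; summing the resulting geometric series yields the desired bounded row sums.

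Finally I would combine the estimates: for every $x \in [0,1]^d$,
\begin{equation*}
\int |\mathcal{K}_K(x,y)| f_X(y) \, dy \;\leq\; \Big(\sum_{\mathbf{k}: b_{K\mathbf{k}}(x) \neq 0} |b_{K\mathbf{k}}(x)|\Big) \cdot \Big(\sup_{\mathbf{k}} \sum_{\mathbf{k}'} |(G^{-1})_{\mathbf{k},\mathbf{k}'}|\Big) \cdot \Big(\sup_{\mathbf{k}'} \int |b_{K\mathbf{k}'}(y)| f_X(y) dy\Big),
\end{equation*}
which by (i)--(iii) and the preceding display is $O(2^{dJ/2}) \cdot O(1) \cdot O(2^{-dJ/2}) = O(1)$. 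The Jaffard-type off-diagonal decay estimate for $G^{-1}$ is the main obstacle; everything else is standard wavelet analysis and an elementary spectral bound on $G$.
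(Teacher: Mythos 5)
Your proposal is correct and follows essentially the same route as the paper: the Lebesgue-constant bound assembled from $\sup_{x}\Vert b^{K}(x)\Vert _{\ell ^{1}}\lesssim \sqrt{K}$, $\max_{k}\Vert b_{Kk}\Vert _{L^{1}(X)}\lesssim K^{-1/2}$, and $\Vert G^{-1}\Vert _{\ell ^{\infty }}\lesssim 1$ is exactly the paper's Lemma \ref{lem-p-matrix}, and the banded-inverse exponential decay (Demko--Moss--Smith) combined with the uniform condition-number bound coming from $L^{2}([0,1]^{d})$-orthonormality and the density bounds is precisely the paper's key step (Lemma \ref{t-dms}). Your invocation of a Jaffard-type multi-index version of the decay estimate for the tensor-product Gram matrix is, if anything, a slightly more careful treatment of the banding in the multivariate case than the paper's flattened-index statement.
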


We now present conditions under which the empirical projection onto a
tensor-product wavelet basis is stable wpa1. Here the projection operator is
\begin{equation*}
P_{K,n}h(x)=b^{K}(x)^{\prime }\left( \frac{B^{\prime }B}{n}\right) ^{-}\frac{%
1}{n}\sum_{i=1}^{n}b^{K}(X_{i})h(X_{i})
\end{equation*}%
where the elements of $b^{K}$ span the tensor products of $d$ univariate
spaces $\mathrm{Wav}(K_{0},[0,1])$. The following Theorem states simple
sufficient conditions for $\Vert P_{K,n}\Vert _{\infty }\lesssim 1$ wpa1.%

\begin{theorem}
\label{c-pstable} Let conditions stated in Theorem \ref{t-wavd} hold. Then $%
\Vert P_{K,n}\Vert _{\infty }\lesssim 1$ wpa1 provided that either (a), (b),
or (c) is satisfied:

\begin{itemize}
\item[(a)] $\{X_i\}_{i=1}^n$ are i.i.d. and $\sqrt{(K\log n)/n}=o(1)$

\item[(b)] $\{X_i\}_{i=1}^n$ are exponentially $\beta $-mixing and $\sqrt{%
K(\log n)^{2}/n}=o(1)$, or

\item[(c)] $\{X_i\}_{i=1}^n$ are algebraically $\beta $-mixing at rate $%
\gamma $ and $\sqrt{(K\log n)/n^{\gamma /(1+\gamma )}}=o(1)$.
\end{itemize}
\end{theorem}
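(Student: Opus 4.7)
The plan is to reduce the result to Theorem~\ref{t-wavd} and then bound the sup-norm perturbation of $P_{K,n}$ from $P_K$. By the triangle inequality, $\|P_{K,n}\|_\infty \leq \|P_K\|_\infty + \|P_{K,n} - P_K\|_\infty$, and Theorem~\ref{t-wavd} gives $\|P_K\|_\infty \lesssim 1$, so it suffices to prove $\|P_{K,n} - P_K\|_\infty = o_p(1)$ in each of cases (a), (b), and (c).

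I would work in the (un-normalized) wavelet coefficient representation, writing $P_{K,n} h = b^K(\cdot)' \hat{c}$ and $P_K h = b^K(\cdot)' c$ with $\hat{c} = \widehat{G}_0^{-1}\widehat{m}_0$, $c = G_0^{-1}m_0$, where $\widehat{G}_0 = B'B/n$, $G_0 = E[b^K(X)b^K(X)']$, $\widehat{m}_0 = n^{-1}\sum_i b^K(X_i)h(X_i)$, and $m_0 = E[b^K(X)h(X)]$. Using $m_0 = G_0 c$ gives $\hat{c} - c = \widehat{G}_0^{-1}\widehat{\mu}_0$, where $\widehat{\mu}_0 = n^{-1}\sum_{i=1}^n b^K(X_i)(h - P_K h)(X_i)$ has zero mean by $L^2(X)$-orthogonality of the residual to $B_K$. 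I would then invoke the standard norm equivalence $\|b^K(\cdot)'\gamma\|_\infty \asymp \sqrt{K}\,\|\gamma\|_{\ell^\infty}$ on $B_K$ (a direct consequence of the $O(1)$ overlap and $\sqrt{K}$-scaling of compactly supported tensor-product wavelets), yielding
\[
\|(P_{K,n} - P_K) h\|_\infty \;\asymp\; \sqrt{K}\,\|\widehat{G}_0^{-1}\widehat{\mu}_0\|_{\ell^\infty} \;\leq\; \sqrt{K}\,\|\widehat{G}_0^{-1}\|_{\ell^\infty \to \ell^\infty}\,\|\widehat{\mu}_0\|_{\ell^\infty}\,.
\]

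Next, I would bound the two remaining factors. Because $\|h - P_K h\|_\infty \lesssim \|h\|_\infty$ by Theorem~\ref{t-wavd}, each coordinate of $\widehat{\mu}_0$ is a zero-mean average of summands with variance $O(1)$ and sup bound $O(\sqrt{K})$. A scalar Bernstein inequality plus a union bound over the $K$ coordinates gives $\|\widehat{\mu}_0\|_{\ell^\infty} = O_p(\sqrt{(\log K)/n})$ in case (a) (the rate $K(\log n)/n = o(1)$ ensuring the Bernstein burst term is dominated), and $\|\widehat{\mu}_0\|_{\ell^\infty} = O_p(\sqrt{q(\log K)/n})$ in cases (b) and (c) via blocking and Berbee coupling in the spirit of Theorem~\ref{beta tropp}, with $q$ chosen as in Lemma~\ref{Bconvbeta}. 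For $\|\widehat{G}_0^{-1}\|_{\ell^\infty \to \ell^\infty}$, I would use that $G_0$ is banded---$(G_0)_{kl} = 0$ whenever $\mathrm{supp}(b_{Kk})\cap\mathrm{supp}(b_{Kl}) = \emptyset$---and spectrally bounded away from $0$ and $\infty$ under the density assumption; classical estimates for the inverse of a positive-definite banded matrix then give $\|G_0^{-1}\|_{\ell^\infty \to \ell^\infty} = O(1)$. Combined with $\|\widehat{G} - I_K\| = o_p(1)$ from Lemmas~\ref{Bconvi.i.d.} and \ref{Bconvbeta} and the fact that $\widehat{G}_0$ inherits the banded sparsity pattern of $G_0$, a Neumann-series / perturbation argument yields $\|\widehat{G}_0^{-1}\|_{\ell^\infty \to \ell^\infty} = O_p(1)$.

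Combining the ingredients gives $\|(P_{K,n} - P_K)h\|_\infty = O_p(\sqrt{K(\log K)/n})$ in (a) and $O_p(\sqrt{Kq(\log K)/n})$ in (b), (c); with $q \asymp \log n$ under exponential $\beta$-mixing and $q \asymp n^{1/(1+\gamma)}$ under algebraic $\beta$-mixing, these specialize to $o_p(1)$ under the three stated rate conditions. The main obstacle I expect is the $\ell^\infty$-operator bound on $\widehat{G}_0^{-1}$: propagating the exponential off-diagonal decay of $G_0^{-1}$ to $\widehat{G}_0^{-1}$ from only a spectral perturbation bound requires careful exploitation of the shared banded structure, so that a Neumann-series expansion of $\widehat{G}_0^{-1}$ around $G_0^{-1}$ remains controlled in the $\ell^\infty$ operator norm rather than merely the spectral norm.
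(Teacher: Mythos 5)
There is a genuine gap in your control of the perturbation term, and it sits exactly where you apply Bernstein's inequality. You need $\Vert P_{K,n}-P_{K}\Vert_{\infty}$ in the $L^{\infty}$ \emph{operator} norm, i.e.\ a bound on $\Vert (P_{K,n}-P_{K})h\Vert_{\infty}$ that is uniform over the unit ball $\{h:\Vert h\Vert_{\infty}\leq 1\}$. Your bound $\Vert \widehat{\mu}_{0}\Vert_{\ell^{\infty}}=O_{p}(\sqrt{(\log K)/n})$ is obtained by Bernstein plus a union bound over the $K$ coordinates \emph{for a fixed} $h$; the unit ball of $L^{\infty}$ is not totally bounded, so this does not upgrade to a uniform statement. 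The honest uniform bound is the deterministic one, $\sup_{\Vert h\Vert_{\infty}\leq 1}\Vert\widehat{\mu}_{0}(h)\Vert_{\ell^{\infty}}\lesssim \max_{k}n^{-1}\sum_{i}|b_{Kk}(X_{i})|$, which concentrates around $\max_{k}E[|b_{Kk}(X_{i})|]\asymp K^{-1/2}$ and not around $\sqrt{(\log K)/n}$. Plugging this in gives $\Vert P_{K,n}-P_{K}\Vert_{\infty}=O_{p}(1)$, not $o_{p}(1)$: your intermediate claim is too strong and the stated rates $O_{p}(\sqrt{K(\log K)/n})$ for the operator-norm perturbation do not follow. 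The $O_{p}(1)$ bound still suffices for the theorem's conclusion, but to get it you must establish the empirical $L^{1}$ concentration $\max_{k}|n^{-1}\sum_{i}|b_{Kk}(X_{i})|-E[|b_{Kk}(X_{i})|]|/E[|b_{Kk}(X_{i})|]=o_{p}(1)$ (by Bernstein/union bound in the i.i.d.\ case and Berbee coupling in the mixing cases) — a step your write-up omits — and at that point the detour through $P_{K}$ buys nothing: the paper bounds $\Vert P_{K,n}\Vert_{\infty}$ directly by an empirical analogue of Lemma \ref{lem-p-matrix}, using $\sup_{x}\Vert b^{K}(x)\Vert_{\ell^{1}}\lesssim\sqrt{K}$ and $\max_{k}n^{-1}\sum_{i}|b_{Kk}(X_{i})|\lesssim K^{-1/2}$ wpa1 to get $\Vert P_{K,n}\Vert_{\infty}\lesssim\Vert(B'B/n)^{-1}\Vert_{\ell^{\infty}}$ (Theorem \ref{t-wavd-emp}).

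On the piece you flag as the main obstacle: there is in fact no need to propagate off-diagonal decay from $G_{0}^{-1}$ to $\widehat{G}_{0}^{-1}$ by a Neumann-series perturbation. The empirical Gram matrix $B'B/n$ is itself banded with the same band width as $G_{0}$ (its $(k,j)$ entry is a sum of products $b_{Kk}(X_{i})b_{Kj}(X_{i})$, which vanish whenever the supports are disjoint), and its eigenvalues are bounded away from zero and infinity wpa1 once $\Vert B'B/n-G_{0}\Vert=o_{p}(1)$. So the Demko--Moss--Smith bound (Lemma \ref{t-dms}) applies \emph{directly} to $B'B/n$ and yields $\Vert(B'B/n)^{-1}\Vert_{\ell^{\infty}}\lesssim 1$ wpa1 with uniform constants. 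This is exactly how the paper handles it; the real difficulty in this theorem is the uniformity-in-$h$ issue above, not the inverse Gram matrix.
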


\section{Brief review of B-spline and wavelet sieve spaces}

\label{sieve def}

We first outline univariate B-spline and wavelet sieve spaces on $[0,1]$,
then deal with the multivariate case by constructing a tensor-product sieve
basis.

\paragraph{B-splines}

B-splines are defined by their order $r\geq 1$ (or degree $r-1 \geq 0$) and
number of interior knots $m\geq 0$. Define the knot set
\begin{equation}
0=t_{-(r-1)}=\ldots =t_{0}\leq t_{1}\leq \ldots \leq t_{m}\leq
t_{m+1}=\ldots =t_{m+r}=1\,.
\end{equation}%
We generate a $L^\infty$-normalized B-spline basis recursively using the De
Boor relation (see, e.g., Chapter 5 of \cite{DeVoreLorentz}) then
appropriately rescale the basis functions. Define the interior intervals $%
I_{1}=[t_{0},t_{1}),\ldots ,I_{m}=[t_{m},t_{m+1}]$ and generate a basis of
order $1$ by setting
\begin{equation}
N_{j,1}(x)=1_{I_{j}}(x)
\end{equation}%
for $j=0,\ldots m$, where $1_{I_{j}}(x)=1$ if $x\in I_{j}$ and $%
1_{I_{j}}(x)=0$ otherwise. Bases of order $r>1$ are generated recursively
according to
\begin{equation}
N_{j,r}(x)=\frac{x-t_{j}}{t_{j+r-1}-t_{j}}N_{j,r-1}(x)+\frac{t_{j+r}-x}{%
t_{j+r}-t_{j+1}}N_{j+1,r-1}(x)
\end{equation}%
for $j=-(r-1),\ldots ,m$ where we adopt the convention $\frac{1}{0}:=0$.
Finally, we rescale the basis by multiplying each $N_{j,r}$ by $(m+r)^{1/2}$
for $j = -(r-1),\ldots,m$. This results in a total of $K=m+r$ splines of
order $r$. Each spline is a polynomial of degree $r-1$ on each interior
interval $I_{1},\ldots ,I_{m}$ and is $(r-2)$-times continuously
differentiable on $(0,1)$ whenever $r> 2 $. The mesh ratio is defined as
\begin{equation}
\mbox {mesh}(K)=\frac{\max_{0\leq j\leq m}(t_{j+1}-t_{j})}{\min_{0\leq j\leq
m}(t_{j+1}-t_{j})}\,.
\end{equation}%
We let the space $\mbox {BSpl}(K,[0,1])$ be the closed linear span of these $%
K=m+r$ splines. The space $\mbox {BSpl}(K,[0,1])$ has uniformly bounded mesh
ratio if $\mbox {mesh}(K)\leq \kappa $ for all $N\geq 0$ and some $\kappa
\in (0,\infty )$. We let $\mbox {BSpl}(K,[0,1],\gamma)$ denote the space $%
\mbox {BSpl}(K,[0,1])$ with degree $\gamma$ and uniformly bounded mesh
ratio. See \cite{deBoor2001} and \cite{Schumacker2007} for further details.

\paragraph{Wavelets}

We construct a wavelet basis with support $[0,1]$ following \cite{CDV1993}.
Let $(\varphi,\psi)$ be a Daubechies pair such that $\varphi$ has support $%
[-N+1,N]$. Given $j$ such that $2^j -2N > 0$, the orthonormal (with respect
to the $L^2([0,1])$ inner product) basis for the space $V_j$ consists of $%
2^j-2N$ interior scaling functions of the form $\varphi_{j,k}(x) = 2^{j/2}
\varphi(2^j x - k)$, each of which has support $[2^{-j}(-N+1+k),2^{-j}(N+k)]$
for $k = N,\ldots,2^j-N-1$. These are augmented with $N$ left scaling
functions of the form $\varphi^0_{j,k}(x) = 2^{j/2}\varphi_k^l(2^j x)$ for $%
k = 0,\ldots,N-1$ (where $\varphi^l_0,\ldots,\varphi^l_{N-1}$ are fixed
independent of $j$), each of which has support $[0,2^{-j}(N+k)]$, and $N$
right scaling functions of the form $\varphi_{j,2^j-k}(x) = 2^{j/2}
\varphi^r_{-k}(2^j(x-1))$ for $k = 1,\ldots,N$ (where $\varphi^r_{-1},%
\ldots,\varphi^r_{-N}$ are fixed independent of $j$), each of which has
support $[1-2^{-j}(1-N-k),1]$. The resulting $2^j$ functions $%
\varphi^0_{j,0},\ldots,\varphi^0_{j,N-1},\varphi_{j,N},\ldots,%
\varphi_{j,2^j-N-1},\varphi^1_{j,2^j-N},\ldots,\varphi^1_{j,2^j-1}$ form an
orthonormal basis (with respect to the $L^2([0,1])$ inner product) for the
subspace they span, denoted $V_j$.

An orthonormal wavelet basis for the space $W_j$, defined as the orthogonal
complement of $V_j$ in $V_{j+1}$, is similarly constructed form the mother
wavelet. This results in an orthonormal basis of $2^j$ functions $%
\psi^0_{j,0},\ldots,\psi^0_{j,N-1},\psi_{j,N},\ldots,\psi_{j,2^j-N-1},%
\psi^1_{j,2^j-N},\ldots,\psi^1_{j,2^j-1}$. To simplify notation we ignore
the $0$ and $1$ superscripts on the left and right wavelets and scaling
functions henceforth.

Let $J_0$ and $J$ be integers such that $2^{J_0} \leq 2^J < 2N$. A wavelet
space at resolution level $J$ is the set of $2^J$ functions given by
\begin{equation}
\mathrm{Wav}(J) = \left\{ \sum_{k=0}^{2^{J_0}-1} a_{J_0,k} \varphi_{J_0,k} +
\sum_{j=J_0}^J \sum_{k=0}^{2^j-1} b_{j,k} \psi_{j,k} : a_{J_0,k}, b_{j,k}
\in \mathbb{R }\right\} \,.
\end{equation}
The spaces $V_j$ and $W_j$ are constructed so that $V_{j+1} = V_j \oplus W_j$
for all $j$ with $2^j-2N > 0$. Therefore, we can reexpress $\mathrm{Wav}(J)$
as
\begin{equation}
\mathrm{Wav}(J) = \left\{ \sum_{k=0}^{2^{J}-1} a_{J,k} \varphi_{J,k} :
a_{J,k} \in \mathbb{R }\right\} \,.
\end{equation}
The orthogonal projection onto $\mathrm{Wav}(J)$ is therefore the same,
irrespective of whether we use the bases for $V_J$ or $V_{J_0} \oplus
W_{J_0} \oplus \ldots \oplus W_J$. Note that, by the support of the $%
\varphi_{J,0},\ldots,\varphi_{J,2^J-1}$, the support of at most $2N-1$ basis
functions overlaps on a set of positive Lebesgue measure. We use this local
support to bound the orthogonal projection operator onto (tensor product)
wavelet bases in Section \ref{sec-stable}.

We say that $\mbox {Wav}(K,[0,1])$ has \emph{regularity} $\gamma $ if $N
\geq \gamma$, and write $\mbox {Wav}(K,[0,1],\gamma )$ for a wavelet space
of regularity $\gamma$ with continuously differentiable basis functions. This particular wavelet basis has been used in \cite{ChenReiss}, \cite{Kato2013} and \cite{ChenChristensenNPIVold} for sieve nonparametric instrumental variables regression. And it also satisfies $\zeta _{K,n}\lesssim \sqrt{K}$ with $K = 2^{J}$ (see, e.g., p. 2240 of \cite{Gobet2004} or p. 2377 of \cite{Kato2013}). See
\cite{Johnstone2013} for further details.

\paragraph{Tensor products}

We construct tensor product B-spline or wavelet bases for $[0,1]^d$ as
follows. First, for $x = (x_1,\ldots,x_d) \in [0,1]^d$ we construct $d$
B-spline or wavelet bases for $[0,1]$. We then form the tensor product basis
by taking the product of the elements of each of the univariate bases.
Therefore, $b^K(x)$ may be expressed as
\begin{equation}
b^K(x) = \bigotimes_{l=1}^d b^{K_0}(x_l)
\end{equation}
where the elements of each vector $b^{K_0}(x_l)$ span $\mathrm{BSpl}%
(K_0,[0,1],\gamma)$ with $K_0 = m+r$ for $l = 1,\ldots,d$, or span $\mathrm{%
Wav}(K_0,[0,1],\gamma)$ with $K_0 = 2^J$ for $l = 1,\ldots,d$. We let $%
\mbox
{BSpl}(K,[0,1]^d,\gamma)$ and $\mbox
{Wav}(K,[0,1]^d,\gamma)$ denote the resulting tensor-product spaces spanned
by the $K = (m+r)^d$ or $K = 2^{dJ}$ elements of $b^K$.

\section{Proofs}

\label{proofs sec}

\subsection{Proofs for Section \protect\ref{main sec}}

\begin{proof}[Proof of Lemma \protect\ref{Bconvi.i.d.}]
Follows from Corollary \ref{troppcor} by setting $\Xi _{i,n}=n^{-1}(%
\widetilde {b}^K_w(X_i)\widetilde {b}^K_w(X_i)^{\prime }-I_K)$ and noting
that $R_n\leq n^{-1}(\zeta_{K,n}^2 \lambda_{K,n}^2+1)$, and $\sigma _n^2\leq
n^{-1}(\zeta_{K,n}^2 \lambda_{K,n}^2+1)$.
\end{proof}

\begin{proof}[Proof of Lemma \protect\ref{Bconvbeta}]
Follows from Corollary \ref{beta rate} by setting $\Xi _{i,n}=n^{-1}(%
\widetilde {b}^K_w(X_i)\widetilde {b}^K_w(X_i)^{\prime }-I_K)$ and noting
that $R_n\leq n^{-1}(\zeta_{K,n}^2 \lambda_{K,n}^2+1)$, and $\sigma _n^2\leq
n^{-2}(\zeta_{K,n}^2 \lambda_{K,n}^2+1)$.
\end{proof}

\begin{proof}[Proof of Lemma \protect\ref{reglem gen}]
By rotational invariance, we may rescale $\widehat{h}$ and $\widetilde{h}$
to yield
\begin{equation}
\widehat{h}(x)-\widetilde{h}(x)=\widetilde{b}_{w}^{K}(x)^{\prime }(%
\widetilde{B}_{w}^{\prime }\widetilde{B}_{w}/n)^{-}\widetilde{B}_{w}^{\prime
}e/n
\end{equation}%
where $e=(\epsilon _{1},\ldots ,\epsilon _{n})^{\prime }$.

Let $\widecheck h = \widehat h - \widetilde h$ to simplify notation. By the
mean value theorem, Assumptions \ref{a-wdelta}(i) and \ref{sieve reg gen}%
(i)(iii), for any $(x,x^{\ast })\in \mathcal{D}_n^{2}$ we have
\begin{eqnarray}
|\widecheck{h}(x)-\widecheck{h}(x^{\ast })| &=&|(\widetilde{b}_w^{K}(x)-%
\widetilde{b}_w^{K}(x^{\ast }))^{\prime }(\widetilde{B}^{\prime }_w%
\widetilde{B}_w/n)^{-}\widetilde{B}^{\prime }_we/n| \\
&=&|(x-x^{\ast })^{\prime }\nabla \widetilde{b}^{K}_w(x^{\ast \ast
})^{\prime }(\widetilde{B}^{\prime }_w\widetilde{B}_w/n)^{-}\widetilde{B}%
^{\prime }_we/n| \\
&\leq & C_{\nabla} \lambda_{K,n} n^{\omega_1}K^{\omega_2} \Vert x-x^{\ast }
\Vert \Vert (\widetilde{B}^{\prime }_w\widetilde{B}_w/n)^{-}\Vert \Vert%
\widetilde{B}^{\prime }_we/n\Vert
\end{eqnarray}%
for some $x^{\ast \ast }$ in the segment between $x$ and $x^{\ast }$ and
some finite constant $C_{\nabla}$ (independent of $x,x^*,n,K$). Now, $\Vert (%
\widetilde{B}^{\prime }_w\widetilde{B}_w/n)^{-1}\Vert = O_{p}(1)$ by
Assumption \ref{a-gram}, and we may deduce by Markov's inequality (under
Assumptions \ref{resid reg}(i)(ii)) that $\Vert \widetilde{B}^{\prime
}_we/n\Vert =O_{p}(\sqrt{K/n})$. It follows that
\begin{equation}
\limsup_{n\rightarrow \infty }{\mathbb{P}}\left( C_{\nabla} \Vert (%
\widetilde{B}^{\prime }_w\widetilde{B}_w/n)^{-}\Vert \Vert \widetilde{B}%
^{\prime }_we/n\Vert >\bar{M}\right) =0
\end{equation}%
for any fixed $\bar{M}>0$ (since condition (i) implies $K/n=o(1)$). Let ${%
\mathcal{B}}_{n}$ denote the event on which $C_{\nabla} \Vert (\widetilde{B}%
^{\prime }_w\widetilde{B}_w/n)^{-}\Vert \Vert \widetilde{B}^{\prime
}_we/n\Vert \leq \bar{M}$ and observe that ${\mathbb{P}}({\mathcal{B}}%
_{n}^{c})=o(1)$. On ${\mathcal{B}}_{n}$, for any $C\geq 1$, a finite
positive $\eta_1 =\eta_1(C)$ and $\eta_2 =\eta_2(C)$ can be chosen such that
\begin{equation}
C_{\nabla} \lambda_{K,n} n^{\omega_1}K^{\omega_2} \Vert x-x^{\ast } \Vert
\Vert (\widetilde{B}^{\prime }_w\widetilde{B}_w/n)^{-}\Vert \Vert\widetilde{B%
}^{\prime }_we/n\Vert \leq C\zeta_{K,n} \lambda_{K,n}\sqrt{(\log n)/n}
\end{equation}%
whenever $\Vert x-x^{\ast }\Vert \leq \eta_1 n^{-\eta_2}$, by Assumption \ref%
{sieve reg gen}(ii). Let ${\mathcal{S}}_{n}$ be the smallest subset of $%
\mathcal{D}_n$ such that for each $x\in \mathcal{D}_n$ there exists a $%
x_{n}\in {\mathcal{S}}_{n}$ with $\Vert x_{n}-x\Vert \leq \eta_1 n^{-\eta_2
} $. For any $x\in \mathcal{D}_n$ let $x_{n}(x)$ denote the $x_{n}\in {%
\mathcal{S}}_{n}$ nearest (in Euclidean distance) to $x$. Then on $\mathcal{B%
}_n$ we have
\begin{equation}
|\widecheck{h}(x)-\widecheck{h}(x_{n}(x))|\leq C\zeta_{K,n} \lambda_{K,n}%
\sqrt{(\log n)/n}  \label{bncondition}
\end{equation}%
for any $x\in \mathcal{D}_n$.

Using the fact that ${\mathbb{P}}(A)\leq {\mathbb{P}}(A \cap B) + \mathbb{P}%
(B^c)$, we obtain
\begin{eqnarray}
& & {\mathbb{P}}\left( \Vert \widecheck{h}\Vert _{\infty }\geq 4C\zeta_{K,n}
\lambda_{K,n}\sqrt{(\log n)/n}\right)  \notag \\
&\leq &{\mathbb{P}}\left( \left\{ \Vert \widecheck{h}\Vert _{\infty }\geq
4C\zeta_{K,n} \lambda_{K,n}\sqrt{(\log n)/n}\right\} \cap {\mathcal{B}}%
_{n}\right) +{\mathbb{P}}({\mathcal{B}}_{n}^{c}) \\
&\leq &{\mathbb{P}}\left( \left\{ \sup_{x\in {\mathcal{X}}}|\widecheck{h}(x)-%
\widecheck{h}(x_{n}(x))|\geq 2C\zeta_{K,n} \lambda_{K,n}\sqrt{(\log n)/n}%
\right\} \cap {\mathcal{B}}_{n}\right)  \notag \\
&&+{\mathbb{P}}\left( \left\{ \max_{x_{n}\in {\mathcal{S}}_{n}}|\widecheck{h}%
(x_{n})|\geq 2C\zeta_{K,n} \lambda_{K,n}\sqrt{(\log n)/n}\right\} \cap {%
\mathcal{B}}_{n}\right) +{\mathbb{P}}({\mathcal{B}}_{n}^{c}) \\
&=&{\mathbb{P}}\left( \left\{ \max_{x_{n}\in {\mathcal{S}}_{n}}|\widecheck{h}%
(x_{n})|\geq 2C\zeta_{K,n} \lambda_{K,n}\sqrt{(\log n)/n}\right\} \cap {%
\mathcal{B}}_{n}\right) +o(1)  \label{probterm}
\end{eqnarray}%
where the final line is by (\ref{bncondition}) and the fact that ${\mathbb{P}%
}({\mathcal{B}}_{n}^{c})=o(1)$. The arguments used to control expression (%
\ref{probterm}) differ depending upon whether or not $\{(X_i,Y_i)\}_{i=1}^n$
is i.i.d.

With \textbf{i.i.d. data}, first let ${\mathcal{A}}_{n}$ denote the event on
which $\Vert (\widetilde{B}^{\prime }_w\widetilde{B}_w/n)-I_{K}\Vert \leq
\frac{1}{2}$ and observe that ${\mathbb{P}}({\mathcal{A}}_{n}^{c})=o(1)$
because $\|\widetilde B^{\prime }_w\widetilde B_w/n - I_K \| = o_p(1)$. Let $%
\{\mathcal{A}_n\}$ denote the indicator function of $\mathcal{A}_n$, let $%
\{M_{n}:n\geq 1\}$ be an increasing sequence diverging to $+\infty $, and
define
\begin{eqnarray}
\epsilon _{1,i,n} &:= & \epsilon _{i}\{|\epsilon _{i}|\leq M_{n}\} -
E[\epsilon _{i}\{|\epsilon _{i}|\leq M_{n}\}|X_i] \\
\epsilon_{2,i,n} &:= & \epsilon _{i}-\epsilon _{1,i,n} \\
G_{i,n}(x_n) & := & \widetilde{b}^{K}_w(x_{n})^{\prime }(\widetilde
B_w^{\prime }\widetilde B_w/n)^{-}\widetilde{b}^{K}_w(X_{i})\{\mathcal{A}%
_n\}\,.
\end{eqnarray}
Since ${\mathbb{P}}(A\cap B)\leq {\mathbb{P}}(A)$ and ${\mathbb{P}}(A)\leq {%
\mathbb{P}}(A \cap B) + \mathbb{P}(B^c)$, we have
\begin{subequations}
\begin{eqnarray}
& & {\mathbb{P}}\left( \left\{ \max_{x_{n}\in {\mathcal{S}}_{n}}|%
\widecheck{h}(x_{n})|\geq 2C\zeta_{K,n} \lambda_{K,n}\sqrt{(\log n)/n}%
\right\} \cap {\mathcal{B}}_{n}\right)  \notag \\
& \leq & {\mathbb{P}}\left( \max_{x_{n}\in {\mathcal{S}}_{n}}|\widecheck{h}%
(x_{n})|\geq 2C\zeta_{K,n} \lambda_{K,n}\sqrt{(\log n)/n} \right)  \notag \\
& \leq & {\mathbb{P}}\left( \left\{ \max_{x_{n}\in {\mathcal{S}}_{n}}|%
\widecheck{h}(x_{n})|\geq 2C\zeta_{K,n} \lambda_{K,n}\sqrt{(\log n)/n}%
\right\} \cap {\mathcal{A}}_{n}\right) + \mathbb{P}(\mathcal{A}_n^c)  \notag
\\
& \leq & (\# \mathcal{S}_n) \max_{x_n \in \mathcal{S}_n} \mathbb{P }\left(
\left\{ \left| \frac{1}{n} \sum_{i=1}^n G_{i,n}(x_n) \epsilon_{1,i,n}
\right| \geq C\zeta_{K,n} \lambda_{K,n}\sqrt{(\log n)/n} \right\} \cap
\mathcal{A}_n \right)  \label{rateiid1} \\
& & + \mathbb{P }\left(\left\{ \max_{x_n \in \mathcal{S}_n} \left| \frac{1}{n%
} \sum_{i=1}^n G_{i,n}(x_n) \epsilon_{2,i,n} \right| \geq C\zeta_{K,n}
\lambda_{K,n}\sqrt{(\log n)/n} \right\} \cap \mathcal{A}_n \right) + \mathbb{%
P}(\mathcal{A}_n^c) \,.  \label{rateiid2}
\end{eqnarray}

{Control of (\ref{rateiid1}):} Note that $\Vert (\widetilde{B}_{w}^{\prime }%
\widetilde{B}_{w}/n)^{-1}\Vert \leq 2$ on $\mathcal{A}_{n}$. Therefore, by
the Cauchy-Schwarz inequality and definition of $\epsilon _{1,i,n}$ and $%
\zeta _{K,n}$, $\lambda _{K,n}$, we have:
\end{subequations}
\begin{equation}
|n^{-1}G_{i,n}(x_{n})\epsilon _{1,i,n}|\lesssim \frac{\zeta
_{K,n}^{2}\lambda _{K,n}^{2}M_{n}}{n}\,.
\end{equation}%
Let $E[\,\cdot \,|X_{1}^{n}]$ denote expectation conditional on $%
X_{1},\ldots ,X_{n}$. Assumption \ref{resid reg}(ii) in the i.i.d. data case
implies that $\sup_x E[\epsilon_i^2 |X_i = x] < \infty$. Therefore,
\begin{eqnarray}
&&\sum_{i=1}^{n}E[(n^{-1}G_{i,n}(x_{n})\epsilon _{1,i,n})^{2}|X_1^n ]  \notag
\\
&=&\frac{1}{n^{2}}\sum_{i=1}^{n}E[\epsilon _{1,i,n}^{2}|X_{i}]\widetilde{b}%
_{w}^{K}(x_{n})^{\prime }(\widetilde{B}_{w}^{\prime }\widetilde{B}_{w}/n)^{-}%
\widetilde{b}_{w}^{K}(X_{i})\widetilde{b}_{w}^{K}(X_{i})^{\prime }\{\mathcal{%
A}_{n}\}(\widetilde{B}_{w}^{\prime }\widetilde{B}_{w}/n)^{-}\widetilde{b}%
_{w}^{K}(x_{n}) \\
&\lesssim &\frac{1}{n^{2}}\sum_{i=1}^{n}\widetilde{b}_{w}^{K}(x_{n})^{\prime
}(\widetilde{B}_{w}^{\prime }\widetilde{B}_{w}/n)^{-}\widetilde{b}%
_{w}^{K}(X_{i})\widetilde{b}_{w}^{K}(X_{i})^{\prime }\{\mathcal{A}_{n}\}(%
\widetilde{B}_{w}^{\prime }\widetilde{B}_{w}/n)^{-}\widetilde{b}%
_{w}^{K}(x_{n}) \\
&=&\frac{1}{n}\widetilde{b}_{w}^{K}(x_{n})^{\prime }E[(\widetilde{B}%
_{w}^{\prime }\widetilde{B}_{w}/n)^{-}(\widetilde{B}_{w}^{\prime }\widetilde{%
B}_{w}/n)\{\mathcal{A}_{n}\}(\widetilde{B}_{w}^{\prime }\widetilde{B}%
_{w}/n)^{-}]\widetilde{b}_{w}^{K}(x_{n}) \; \lesssim \; \frac{\zeta
_{K,n}^{2}\lambda _{K,n}^{2}}{n}\,.
\end{eqnarray}%
Bernstein's inequality for independent random variables (see, e.g., pp.
192--193 of \cite{Pollard1984}) then provides that
\begin{eqnarray}
&&(\#\mathcal{S}_{n})\max_{x_{n}\in \mathcal{S}_{n}}\mathbb{P}\left(
\left.\left\{ \left\vert \frac{1}{n}\sum_{i=1}^{n}G_{i,n}(x_{n})\epsilon
_{1,i,n}\right\vert \geq C\zeta _{K,n}\lambda _{K,n}\sqrt{(\log n)/n}%
\right\} \cap \mathcal{A}_{n} \right| X_1^n \right)  \notag \\
&\lesssim &n^{\nu _{1}+\eta _{2}\nu _{2}}\mathrm{exp}\left\{ -\frac{%
C^{2}\zeta _{K,n}^{2}\lambda _{K,n}^{2}(\log n)/n}{C_{1}\zeta
_{K,n}^{2}\lambda _{K,n}^{2}/n+C_{2}\zeta _{K,n}^{2}\lambda
_{K,n}^{2}M_{n}/n\times C\zeta _{K,n}\lambda _{K,n}\sqrt{(\log n)/n}}\right\}
\\
&\lesssim &\mathrm{exp}\left\{ \log n-\frac{C^{2}\zeta _{K,n}^{2}\lambda
_{K,n}^{2}(\log n)/n}{C_{3}\zeta _{K,n}^{2}\lambda _{K,n}^{2}/n}\right\} +%
\mathrm{exp}\left\{ \log n-\frac{C\sqrt{n\log n}}{C_{4}\zeta _{K,n}\lambda
_{K,n}M_{n}}\right\}
\end{eqnarray}%
for finite positive constants $C_{1},\ldots ,C_{4}$ (independent of $%
X_1,\ldots,X_n$). Thus (\ref{rateiid1}) vanishes asymptotically for all
sufficiently large $C$ provided $M_{n}=O(\zeta _{K,n}^{-1}\lambda _{K,n}^{-1}%
\sqrt{n/(\log n)})$.

{Control of the leading term in (\ref{rateiid2}):} First note that $%
|G_{i,n}|\leq 2\zeta _{K,n}^{2}\lambda _{K,n}^{2}$ by the Cauchy-Schwarz
inequality and Assumption \ref{sieve reg gen}(iii). This, together with
Markov's inequality and Assumption \ref{resid reg}(iii) yields
\begin{eqnarray*}
&&{\mathbb{P}}\left( \max_{x_{n}\in {\mathcal{S}}_{n}}\left\vert \frac{1}{n}%
\sum_{i=1}^{n}G_{i,n}\epsilon _{2,i,n}\right\vert \geq C\zeta _{K,n}\lambda
_{K,n}\sqrt{(\log n)/n}\right) \\
&\lesssim &\frac{\zeta _{K,n}^{2}\lambda _{K,n}^{2}E[|\epsilon
_{i}|\{|\epsilon _{i}|>M_{n}\}]}{\zeta _{K,n}\lambda _{K,n}\sqrt{(\log n)/n}}%
\leq \frac{\zeta _{K,n}\lambda _{K,n}\sqrt{n}}{\sqrt{\log n}}\frac{%
E[|\epsilon _{i}|^{2+\delta }\{|\epsilon _{i}|>M_{n}\}]}{M_{n}^{1+\delta }}
\end{eqnarray*}%
which is $o(1)$ provided $\zeta _{K,n}\lambda _{K,n}\sqrt{n/\log n}%
=O(M_{n}^{1+\delta })$. Setting $M_{n}^{1+\delta }\asymp \zeta _{K,n}\lambda
_{K,n}\sqrt{n/\log n}$ trivially satisfies the condition $\zeta
_{K,n}\lambda _{K,n}\sqrt{n/\log n}=O(M_{n}^{1+\delta })$. The condition $%
M_{n}=O(\zeta _{K,n}^{-1}\lambda _{K,n}^{-1}\sqrt{n/(\log n)})$ is satisfied
for this choice of $M_{n}$ provided $\zeta _{K,n}^{2}\lambda
_{K,n}^{2}\lesssim (n/\log n)^{\delta /(2+\delta )}$ (cf. condition (i)).
Finally, it is straightforward to verify that $M_{n}\rightarrow \infty $ as
a consequence of condition (i). Thus, both (\ref{rateiid1}) and (\ref%
{rateiid2}) vanish asymptotically. This completes the proof in the i.i.d.
case.

With \textbf{weakly dependent data} we use $\mathbb{P}(A \cap B) \leq
\mathbb{P}(A)$ to bound remaining term on the right-hand side of (\ref%
{probterm}) by
\begin{eqnarray}
&&{\mathbb{P}}\left( \left\{ \max_{x_{n}\in {\mathcal{S}}_{n}}|\widecheck{h}%
(x_{n})|\geq 2C\zeta_{K,n} \lambda_{K,n}\sqrt{(\log n)/n}\right\} \right)
\notag \\
&\leq &{\mathbb{P}}\left( \max_{x_{n}\in {\mathcal{S}}_{n}}|\widetilde{b}%
^{K}_w(x_{n})^{\prime }\{(\widetilde{B}^{\prime }_w\widetilde{B}%
_w/n)^{-}-I_{K}\}\widetilde{B}^{\prime }_we/n|\geq C\zeta_{K,n} \lambda_{K,n}%
\sqrt{(\log n)/n}\right)  \label{psiterm1} \\
&&+{\mathbb{P}}\left( \max_{x_{n}\in {\mathcal{S}}_{n}}|\widetilde{b}%
^{K}_w(x_{n})^{\prime }\widetilde{B}^{\prime }_we/n|\geq C\zeta_{K,n}
\lambda_{K,n}\sqrt{(\log n)/n}\right) \,.  \label{psiterm2}
\end{eqnarray}%
It is now shown that a sufficiently large $C$ can be chosen to control terms
(\ref{psiterm1}) and (\ref{psiterm2}).

{Control of (\ref{psiterm1}):} The Cauchy-Schwarz inequality and Assumption %
\ref{sieve reg gen}(iii) yield
\begin{equation}
|\widetilde {b}^K_w(x_n)^{\prime }\{(\widetilde {B}^{\prime }_w\widetilde {B}%
_w/n)^{-} - I_K\}\widetilde {B}^{\prime }_we/n|\lesssim \zeta_{K,n}
\lambda_{K,n}\|(\widetilde {B}^{\prime }_w\widetilde {B}_w/n)^{-}-I_K\|%
\times O_p(\sqrt{K/n})
\end{equation}
uniformly for $x_n\in {\mathcal{S}}_n$ (since $\|\widetilde B^{\prime }_w
e/n\| = O_p(\sqrt{K/n})$ under Assumption \ref{resid reg}(i)(ii)). On $%
\mathcal{A}_n$ we have
\begin{equation}
\|(\widetilde {B}^{\prime }_w\widetilde {B}_w/n)^{-}-I_K\| = \|(\widetilde {B%
}^{\prime }_w\widetilde {B}_w/n)^{-1}((\widetilde {B}^{\prime }_w\widetilde {%
B}_w/n)-I_K)\| \leq 2 \|(\widetilde {B}^{\prime }_w\widetilde {B}%
_w/n)-I_K\|\,.
\end{equation}
Thus $\|\widetilde {B}^{\prime }_w\widetilde {B}_w/n-I_K\|=O_p(\sqrt{(\log
n)/K})$ (i.e. condition (ii)) ensures that (\ref{psiterm1}) can be made
arbitrarily small for large enough $C$.

{Control of (\ref{psiterm2}):} Let $M_{n}$ be as in the i.i.d. case and
define
\begin{eqnarray}
\epsilon _{1,i,n} &:= & \epsilon _{i}\{|\epsilon _{i}|\leq M_{n}\} -
E[\epsilon _{i}\{|\epsilon _{i}|\leq M_{n}\}|\mathcal{F}_{i-1}] \\
\epsilon_{2,i,n} &:= & \epsilon _{i}-\epsilon _{1,i,n} \\
g_{i,n}(x_{n})& := &\widetilde{b}_{w}^{K}(x_{n})^{\prime }\widetilde{b}%
_{w}^{K}(X_{i})\{\mathcal{A}_{n}\}\,.
\end{eqnarray}
The relation ${\mathbb{P}}(A)\leq {\mathbb{P}}(A\cap B)+\mathbb{P}(B^{c})$
and the triangle inequality together yield
\begin{subequations}
\begin{eqnarray}
&&{\mathbb{P}}\left( \max_{x_{n}\in {\mathcal{S}}_{n}}|\widetilde{b}%
_{w}^{K}(x_{n})^{\prime }\widetilde{B}_{w}^{\prime }e/n|\geq C\zeta
_{K,n}\lambda _{K,n}\sqrt{(\log n)/n}\right) -{\mathbb{P}}({\mathcal{A}}%
_{n}^{c})  \notag \\
&\leq &(\#{\mathcal{S}}_{n})\max_{x_{n}\in {\mathcal{S}}_{n}}{\mathbb{P}}%
\left( \left\{ \left\vert \frac{1}{n}\sum_{i=1}^{n}g_{i,n}\epsilon
_{1,i,n}\right\vert >\frac{C}{2}\zeta _{K,n}\lambda _{K,n}\sqrt{(\log n)/n}%
\right\} \cap {\mathcal{A}}_{n}\right)  \label{truncnpiv1} \\
&&+{\mathbb{P}}\left( \max_{x_{n}\in {\mathcal{S}}_{n}}\left\vert \frac{1}{n}%
\sum_{i=1}^{n}g_{i,n}\epsilon _{2,i,n}\right\vert \geq \frac{C}{2}\zeta
_{K,n}\lambda _{K,n}\sqrt{(\log n)/n}\right) \,.  \label{truncnpiv2}
\end{eqnarray}%

{Control of (\ref{truncnpiv2}):} First note that $|g_{i,n}|\leq \zeta
_{K,n}^{2}\lambda _{K,n}^{2}$ by the Cauchy-Schwarz inequality and
Assumption \ref{sieve reg gen}(iii). This, together with Markov's inequality
and Assumption \ref{resid reg}(iii) yields
\end{subequations}
\begin{eqnarray*}
&&{\mathbb{P}}\left( \max_{x_{n}\in {\mathcal{S}}_{n}}\left\vert \frac{1}{n}%
\sum_{i=1}^{n}g_{i,n}\epsilon _{2,i,n}\right\vert \geq \frac{C}{2}\zeta
_{K,n}\lambda _{K,n}\sqrt{(\log n)/n}\right) \\
&\lesssim &\frac{\zeta _{K,n}^{2}\lambda _{K,n}^{2}E[|\epsilon
_{i}|\{|\epsilon _{i}|>M_{n}\}]}{\zeta _{K,n}\lambda _{K,n}\sqrt{(\log n)/n}}%
\leq \frac{\zeta _{K,n}\lambda _{K,n}\sqrt{n}}{\sqrt{\log n}}\frac{%
E[|\epsilon _{i}|^{2+\delta }\{|\epsilon _{i}|>M_{n}\}]}{M_{n}^{1+\delta }}
\end{eqnarray*}%
which is $o(1)$ provided $\zeta _{K,n}\lambda _{K,n}\sqrt{n/\log n}%
=O(M_{n}^{1+\delta })$.

{Control of (\ref{truncnpiv1}):} By Assumption \ref{resid reg}(ii), the
predictable variation of the summands in (\ref{truncnpiv1}) may be bounded
by
\begin{eqnarray}
\frac{1}{n^{2}}\sum_{i=1}^{n}E[(g_{i,n}\epsilon _{1,i,n})^{2}|{\mathcal{F}}%
_{i-1}] &\lesssim &n^{-1}\widetilde{b}_{w}^{K}(x_{n})^{\prime }\left(
\widetilde{B}_{w}^{\prime }\widetilde{B}_{w}/n\right) \widetilde{b}%
_{w}^{K}(x_{n}) \\
&\lesssim &\zeta _{K,n}^{2}\lambda _{K,n}^{2}/n\quad
\mbox {on
${\mathcal{A}}_n$}
\end{eqnarray}%
uniformly for $x_{n}\in {\mathcal{S}}_{n}$. Moreover,
\begin{equation}
|n^{-1}g_{i,n}\epsilon _{1,i,n}|\lesssim \frac{\zeta _{K,n}^{2}\lambda
_{K,n}^{2}M_{n}}{n}
\end{equation}%
uniformly for $x_{n}\in {\mathcal{S}}_{n}$. An tail bound for martingales
\cite[Proposition 2.1]{Freedman1975} then provides that
\begin{eqnarray}
&&(\#{\mathcal{S}}_{n})\max_{x_{n}\in {\mathcal{S}}_{n}}{\mathbb{P}}\left(
\left\{ \left\vert \frac{1}{n}\sum_{i=1}^{n}g_{i,n}\epsilon
_{1,i,n}\right\vert >\frac{C}{2}\zeta _{K,n}\lambda _{K,n}\sqrt{(\log n)/n}%
\right\} \cap {\mathcal{A}}_{n}\right)  \notag \\
&\lesssim &n^{\nu _{1}+\eta _{2}\nu _{2}}\mathrm{exp}\left\{ -\frac{%
C^{2}\zeta _{K,n}^{2}\lambda _{K,n}^{2}(\log n)/n}{c_{1}\zeta
_{K,n}^{2}\lambda _{K,n}^{2}/n+c_{2}\zeta _{K,n}^{2}\lambda
_{K,n}^{2}M_{n}/n\times C\zeta _{K,n}\lambda _{K,n}\sqrt{(\log n)/n}}\right\}
\\
&\lesssim &\mathrm{exp}\left\{ \log n-\frac{C^{2}\zeta _{K,n}^{2}\lambda
_{K,n}^{2}(\log n)/n}{c_{3}\zeta _{K,n}^{2}\lambda _{K,n}^{2}/n}\right\} +%
\mathrm{exp}\left\{ \log n-\frac{C\sqrt{n\log n}}{c_{4}\zeta _{K,n}\lambda
_{K,n}M_{n}}\right\}
\end{eqnarray}%
for finite positive constants $c_{1},\ldots ,c_{4}$. Thus (\ref{truncnpiv1})
vanishes asymptotically for all sufficiently large $C$ provided $%
M_{n}=O(\zeta _{K,n}^{-1}\lambda _{K,n}^{-1}\sqrt{n/(\log n)})$. Choosing $%
M_{n}$ as in the i.i.d. case completes the proof.
\end{proof}

\begin{proof}[Proof of Remark \protect\ref{Pkw bound}]
Take any $h\in L_{w,n}^{\infty }$ with $\Vert h\Vert _{\infty ,w}\neq 0$. By
the Cauchy-Schwarz inequality we have
\begin{eqnarray}
|P_{K,w,n}(x)| &\leq &\Vert \widetilde{b}_{w}^{K}(x)\Vert \Vert (\widetilde{B%
}_{w}^{\prime }\widetilde{B}_{w}/n)^{-}\widetilde{B}_{w}^{\prime }H/n\Vert \\
&\leq &\zeta _{K,n}\lambda _{K,n}\Vert (\widetilde{B}_{w}^{\prime }%
\widetilde{B}_{w}/n)^{-}\widetilde{B}_{w}^{\prime }H/n\Vert
\end{eqnarray}%
uniformly over $x$, where $H=(h(X_{1})w_{n}(X_{1}),\ldots
,h(X_{n})w_{n}(X_{n}))^{\prime }$. When $\lambda _{\min }(\widetilde{B}%
_{w}^{\prime }\widetilde{B}_{w}/n)\geq \frac{1}{2}$ (which it is wpa1 since $%
\Vert \widetilde{B}_{w}^{\prime }\widetilde{B}_{w}/n-I_{K}\Vert =o_{p}(1)$),
we have:
\begin{eqnarray*}
\Vert (\widetilde{B}_{w}^{\prime }\widetilde{B}_{w}/n)^{-}\widetilde{B}%
_{w}^{\prime }H/n\Vert ^{2} &=&(H^{\prime }\widetilde{B}_{w}/n)(\widetilde{B}%
_{w}^{\prime }\widetilde{B}_{w}/n)^{-1}(\widetilde{B}_{w}^{\prime }%
\widetilde{B}_{w}/n)^{-1}\widetilde{B}_{w}^{\prime }H/n \\
&\leq &2(H^{\prime }\widetilde{B}_{w}/n)(\widetilde{B}_{w}^{\prime }%
\widetilde{B}_{w}/n)^{-1}\widetilde{B}_{w}^{\prime }H/n \\
&\leq &2\Vert h\Vert _{w,n}^{2}\leq 2\Vert h\Vert _{\infty ,w}^{2}
\end{eqnarray*}%
where the second last line is because $\widetilde{B}_{w}(\widetilde{B}%
_{w}^{\prime }\widetilde{B}_{w})^{-1}\widetilde{B}_{w}^{\prime }$ is a
projection matrix. Thus $\Vert P_{K,w,n}h\Vert _{\infty ,w}/\Vert h\Vert
_{\infty ,w}\leq \sqrt{2}\zeta _{K,n}\lambda _{K,n}$ wpa1 (uniformly in $h$%
). Taking the sup over $h$ yields the desired result.
\end{proof}

\begin{proof}[Proof of Lemma \protect\ref{rate gen}]
It suffices to control the bias term. Note that $\widetilde h = P_{K,w,n}
h_0 $. Therefore, for any $h \in B_{K,w}$ we have, by the usual argument,
\begin{eqnarray}
\|\widetilde h - h_0\|_{\infty,w} & = & \|\widetilde h - h + h -
h_0\|_{\infty,w} \\
& = & \|P_{K,w,n}(h_0 - h) + h - h_0\|_{\infty,w} \\
& \leq & \|P_{K,w,n}(h_0 - h)\|_{\infty,w} + \| h - h_0\|_{\infty,w} \\
& \leq & (1 + \|P_{K,w,n}\|_{\infty,w}) \| h - h_0\|_{\infty,w}\,.
\end{eqnarray}
Taking the infimum over $h \in B_{K,w}$ yields the desired result.
\end{proof}

\begin{proof}[Proof of Theorem \protect\ref{sup norm rate regression}]
The variance term is $O_p(\sqrt{K(\log n)/n})$ by Lemma \ref{reglem gen}:
condition (i) of Lemma \ref{reglem gen} is satisfied by virtue of the
condition $\delta \geq d/p$; condition (ii) is satisfied for $K \asymp
(n/\log n)^{d/(2p+d)}$ directly in the i.i.d. case, and by Lemma \ref%
{Bconvbeta} and the conditions on $p$ for the $\beta$-mixing cases.

For the bias term, it is well known that $\inf_{h\in B_{K,w}}\Vert
h_{0}-h\Vert _{\infty ,w}=O(K^{-p/d})$ under Assumptions \ref{X reg}, \ref%
{parameter regression} and \ref{b sieve} (e.g. \cite{Huang1998} and \cite%
{Chen2007}). It therefore remains to show that $\Vert P_{K,w,n}\Vert
_{\infty }\lesssim 1$ wpa1.

When $B_{K}=\mathrm{BSpl}(K,[0,1]^{d},\gamma )$, we may slightly adapt
Corollary A.1 of \cite{Huang2003} to show that $\Vert P_{K,w,n}\Vert
_{\infty ,w}\lesssim 1$ wpa1, using the fact that the empirical and true $%
L^{2}(X)$ norms are equivalent over $B_{K,w}$ wpa1 by virtue of the
condition $\Vert \widetilde{B}_{w}^{\prime }\widetilde{B}_{w}/n-I_{K}\Vert
=o_{p}(1)$ (see our Lemma \ref{eilem}). This condition is satisfied with $%
K\asymp (n/\log n)^{d/(2p+d)}$ for i.i.d. data (see Lemma \ref{Bconvi.i.d.}%
), and is satisfied in the $\beta $-mixing case by Lemma \ref{Bconvbeta} and
the conditions on $p$.

When $B_{K}=\mathrm{Wav}(K,[0,1]^{d},\gamma )$, the conditions on $K$ in
Theorem \ref{c-pstable} are satisfied with $K \asymp(n/\log n)^{d/(2p+d)}$
under the conditions on $p$ in the Theorem. Therefore, $\Vert P_{K,w,n}\Vert
_{\infty ,w}\lesssim 1$ wpa1.
\end{proof}

\begin{proof}[Proof of Lemma \protect\ref{lem-hhat l2}]
By similar arguments to the proof of Lemma \ref{reglem gen}:
\begin{equation}
\|\widehat h - \widetilde h\|_{L^2(X)} = \|(\widetilde b^K_w)^{\prime
}(\widetilde B^{\prime }_w \widetilde B_w/n)^- \widetilde B_w^{\prime
}e/n\|_{L^2(X)} \leq \|(\widetilde B^{\prime }_w \widetilde B_w/n)^-\| \|
\widetilde B_w^{\prime }e/n\| \,.
\end{equation}
Chebyshev's inequality and Assumption \ref{resid reg}(i)(ii) yield $\|
\widetilde B^{\prime }_w e/n\| = O_p(\sqrt{K/n})$. Moreover, it follows from
Assumption \ref{a-gram} that $\|(\widetilde B^{\prime }_w \widetilde
B_w/n)^-\| = O_p(1)$.

For the remaining term it suffices to show that $\|\widetilde h -
h_0\|_{L^2(X)} = O_p(\|h_0 - h_{0,K}\|_{L^2(X)})$. By the triangle
inequality we bound
\begin{equation}
\|\widetilde h - h_0\|_{L^2(X)} \leq \|\widetilde h - h_{0,K}\|_{L^2(X)} +
\|h_{0,K} - h_0\|_{L^2(X)}\,.
\end{equation}
Recall the definition of the empirical projection $P_{K,w,n}$ from
expression (\ref{e-pkwn def}), and observe that $\widetilde h = P_{K,w,n}
h_0 $ and that $P_{K,w,n} h = h$ for all $h \in B_{K,w}$. Also recall the
definition of $L^2_{w,n}(X)$ as the space of functions with finite norm $%
\|\cdot\|_{w,n}$ where $\|f\|_{w,n}^2 = \frac{1}{n} \sum_{i=1}^n f(X_i)^2
w_n(X_i)$. Since the empirical and theoretical $L^2(X)$ norms are equivalent
over $B_{K,w}$ wpa1 under the condition $\|\widetilde B^{\prime }_w
\widetilde B_w/n - I_K\| = o_p(1)$ (see Section \ref{ei sec-1}). Therefore,
we have
\begin{eqnarray}
\|\widetilde h - h_{0,K}\|_{L^2(X)}^2 & = & \|P_{K,w,n} (h_0 - h_{0,K})
\|_{L^2(X)}^2 \\
& \asymp & \|P_{K,w,n} (h_0 - h_{0,K}) \|_{w,n}^2 \quad \mbox{wpa1} \\
& \leq & \|(h_0 - h_{0,K}) \|_{w,n}^2
\end{eqnarray}
where the second line is by equivalence of the empirical and theoretical $%
L^2(X)$ norms wpa1, and the final line is because $P_{K,w,n}$ is an
orthogonal projection on $L^2_{w,n}(X)$. Finally, Markov's inequality yields
$\|(h_0 - h_{0,K}) \|_{w,n}^2 = O_p(\|h_0 - h_{0,K}\|^2_{L^2(X)})$.
\end{proof}

\subsection{Proofs for Section \protect\ref{inference sec}}

\begin{proof}[Proof of Lemma \protect\ref{lem-omcgce}]
We use a truncation argument together with exponential inequalities for
random matrices. Let $M_n \asymp (\zeta_{K,n}
\lambda_{K,n})^{(2+\delta)/\delta}$ (with $\delta$ as in Assumption \ref%
{resid reg}(iii) be a sequence of positive numbers and let
\begin{eqnarray}
\widehat \Omega_1 & = & \frac{1}{n} \sum_{i=1}^n (\Xi_{1,i} - E[\Xi_{1,i}])
\\
\widehat \Omega_2 & = & \frac{1}{n} \sum_{i=1}^n (\Xi_{2,i} - E[\Xi_{2,i}])
\\
\Xi_{1,i} & = & \epsilon_i^2 \widetilde b^K_w(X_i)\widetilde
b^K_w(X_i)^{\prime }\{ \|\epsilon_i^2 \widetilde b^K_w(X_i)\widetilde
b^K_w(X_i)^{\prime }\| \leq M_n^2\} \\
\Xi_{2,i} & = & \epsilon_i^2 \widetilde b^K_w(X_i)\widetilde
b^K_w(X_i)^{\prime }\{ \|\epsilon_i^2 \widetilde b^K_w(X_i)\widetilde
b^K_w(X_i)^{\prime }\| > M_n^2\} \,.
\end{eqnarray}
Clearly $\widehat \Omega - \Omega = \widehat \Omega_1 + \widehat \Omega_2$,
so it is enough to show that $\|\widehat \Omega_1\| = o_p(1)$ and $%
\|\widehat \Omega_2\| = o_p(1)$.

Control of $\Vert \widehat{\Omega }_{1}\Vert $: By definition, $\Vert \Xi
_{1,i}\Vert \leq M_{n}^{2}$. It follows by the triangle inequality and
Jensen's inequality ($\Vert \cdot \Vert $ is convex) that $\Vert \Xi
_{1,i}-E[\Xi _{1,i}]\Vert \leq 2M_{n}$. Moreover, by Assumption \ref{resid
reg}(ii)
\begin{eqnarray}
E[(\Xi _{1,i}-E[\Xi _{1,i}])^{2}] &\leq &E[\epsilon _{i}^{4}\Vert \widetilde{%
b}_{w}^{K}(X_{i})\Vert ^{2}\widetilde{b}_{w}^{K}(X_{i})\widetilde{b}%
_{w}^{K}(X_{i})^{\prime }\{\Vert \epsilon _{i}^{2}\widetilde{b}%
_{w}^{K}(X_{i})\widetilde{b}_{w}^{K}(X_{i})^{\prime }\Vert \leq M_{n}^{2}\}]
\\
&\leq &M_{n}^{2}E[\epsilon _{i}^{2}\widetilde{b}_{w}^{K}(X_{i})\widetilde{b}%
_{w}^{K}(X_{i})^{\prime }\{\Vert \epsilon _{i}^{2}\widetilde{b}%
_{w}^{K}(X_{i})\widetilde{b}_{w}^{K}(X_{i})^{\prime }\Vert \leq M_{n}^{2}\}]
\\
&\leq &M_{n}^{2}E[E[\epsilon _{i}^{2}|X_{i}]\widetilde{b}_{w}^{K}(X_{i})%
\widetilde{b}_{w}^{K}(X_{i})^{\prime }] \\
&\lesssim &M_{n}^{2}E[\widetilde{b}_{w}^{K}(X_{i})\widetilde{b}%
_{w}^{K}(X_{i})^{\prime }]\;=\;M_{n}^{2}I_{K}
\end{eqnarray}%
where the inequalities are understood in the sense of positive semi-definite
matrices. It follows that $\Vert E[(\Xi _{1,i}-E[\Xi _{1,i}])^{2}]\Vert
\lesssim M_{n}^{2}$. In the i.i.d. case, Corollary \ref{troppcor} yields $%
\Vert \widehat{\Omega }_{1}\Vert =O_{p}(M_{n}\sqrt{(\log K)/n})=o_{p}(1)$.
In the $\beta $-mixing case, Corollary \ref{beta rate} yields $\Vert
\widehat{\Omega }_{1}\Vert =O_{p}(M_{n}\sqrt{q(\log K)/n})$, and the result
follows by taking $q=\gamma ^{-1}\log n$ in the exponentially $\beta $%
-mixing case and $q\asymp n^{1/(1+\gamma )}$ in the algebraically $\beta $%
-mixing case.

Control of $\Vert \widehat{\Omega }_{2}\Vert $: The simple bound $\Vert \Xi
_{2,i}\Vert \leq (\zeta _{K,n}\lambda _{K,n})^{2}\epsilon _{i}^{2}\{\epsilon
_{i}^{2}>M_{n}^{2}/(\zeta _{K,n}\lambda _{K,n})^{2}\}$ together with the
triangle inequality and Jensen's inequality ($\Vert \cdot \Vert $ is convex)
yield
\begin{eqnarray}
E[\Vert \widehat{\Omega }_{2}\Vert ] &\leq &2(\zeta _{K,n}\lambda
_{K,n})^{2}E[\epsilon _{i}^{2}\{|\epsilon _{i}|>M_{n}/(\zeta _{K,n}\lambda
_{K,n})\}] \\
&\leq &2\frac{(\zeta _{K,n}\lambda _{K,n})^{2+\delta }}{M_{n}^{\delta }}%
E[|\epsilon _{i}|^{2+\delta }\{|\epsilon _{i}|>M_{n}/(\zeta _{K,n}\lambda
_{K,n})\}\;=\;o(1)
\end{eqnarray}%
by Assumption \ref{resid reg}(iii) because $M_{n}/(\zeta _{K,n}\lambda
_{K,n})\asymp (\zeta _{K,n}\lambda _{K,n})^{2/\delta }\rightarrow \infty $
and $(\zeta _{K,n}\lambda _{K,n})^{2+\delta }/M_{n}^{\delta }\asymp 1$.
Therefore, $\Vert \widehat{\Omega }_{2}\Vert =o_{p}(1)$ by Markov's
inequality.
\end{proof}

\begin{proof}[Proof of Theorem \protect\ref{t-dist-new}]
First define $u_K^*(x) = v_K^*(x)/\|v_K^*\|_{sd}$. Note that $E[(u_K^*(X_i)
\epsilon_i)^2] = 1$, $E[u_K^*(X_i)^2] = \|v\|_{L^2(X)}^2/\|v\|_{sd}^2 \asymp
1$ (by Assumptions \ref{resid reg}(ii)(iv)), and $\|u_K^*\|_\infty \lesssim
\zeta_{K,n} \lambda_{K,n}$ by the relation between the $L^2$ and sup norms
on $B_{K,w}$.

By Assumption \ref{a-functional}(i)(ii) and the fact that $\widehat h,
\widetilde h \in N_{K,n}$ wpa1, we obtain:
\begin{eqnarray}
\frac{\sqrt{n}(f(\widehat{h})-f(\widetilde{h}))}{V_{K}^{1/2}} &=&\frac{\sqrt{%
n}\frac{\partial f(h_{0})}{\partial h}[\widehat{h}-\widetilde{h}]}{%
V_{K}^{1/2}}+o_p(1) \\
&=&\frac{1}{\sqrt{n}}\sum_{i=1}^{n}u_{K}^{\ast }(X_{i})\epsilon _{i}+\frac{%
\frac{\partial f(h_{0})}{\partial h}[\widetilde{b}_{w}^{K}]^{\prime }((%
\widetilde{B}_{w}^{\prime }\widetilde{B}_{w}/n)^{-}-I_{K})(\widetilde{B}%
_{w}^{\prime }e/\sqrt{n})}{V_{K}^{1/2}}+o_{p}(1)\,.  \label{e-expansion}
\end{eqnarray}%
The leading term is now shown to be asymptotically $N(0,1)$ and the second
term is shown to be asymptotically negligible. The proof of this differs
depending upon whether the data are i.i.d. or weakly dependent.

With \textbf{i.i.d. data}, we first show that the second term on the
right-hand side of (\ref{e-expansion}) is $o_{p}(1)$. Let $\eta >0$ be
arbitrary. Let $C_{\eta }$ be such that $\limsup \mathbb{P}(\Vert \widetilde{%
B}_{w}^{\prime }\widetilde{B}_{w}/n-I_{K}\Vert >C_{\eta }\zeta _{K,n}\lambda
_{K,n}\sqrt{(\log K)/n})\leq \eta $ (we may always choose such a $C_{\eta }$
by Lemma \ref{Bconvi.i.d.}), let $\mathcal{C}_{n,\eta }$ denote the event $%
\Vert \widetilde{B}_{w}^{\prime }\widetilde{B}_{w}/n-I_{K}\Vert \leq C_{\eta
}\zeta _{K,n}\lambda _{K,n}\sqrt{(\log K)/n}$ and let $\{\mathcal{C}_{n,\eta
}\}$ denote its indicator function. Observe that $V_{K}^{1/2}\asymp
\left\Vert \frac{\partial f(h_{0})}{\partial h}[\widetilde{b}%
_{w}^{K}]\right\Vert $ (under Assumption \ref{resid reg}(ii)(iv)). Let $%
E[\,\cdot \,|X_{1}^{n}]$ denote expectation conditional on $X_{1},\ldots
,X_{n}$ and let $\partial \widetilde{b}_{w}^{K}$ denote $\frac{\partial
f(h_{0})}{\partial h}[\widetilde{b}_{w}^{K}]$. By iterated expectations,
\begin{eqnarray}
&&E\left[ \left( \frac{(\partial \widetilde{b}_{w}^{K})^{\prime }((%
\widetilde{B}_{w}^{\prime }\widetilde{B}_{w}/n)^{-}-I_{K})(\widetilde{B}%
_{w}^{\prime }e/\sqrt{n})}{V_{K}^{1/2}}\right) ^{2}\{{\mathcal{C}}_{n,\eta
}\}\right]  \notag \\
&=&\frac{(\partial \widetilde{b}_{w}^{K})^{\prime }E[((\widetilde{B}%
_{w}^{\prime }\widetilde{B}_{w}/n)^{-}-I_{K})E[(\widetilde{B}_{w}^{\prime
}ee^{\prime }\widetilde{B}_{w}/n)|X_{1}^{n}]((\widetilde{B}_{w}^{\prime }%
\widetilde{B}_{w}/n)^{-}-I_{K})\{{\mathcal{C}}_{n,\eta }\}]\partial
\widetilde{b}_{w}^{K}}{V_{K}}  \notag \\
&=&\frac{(\partial \widetilde{b}_{w}^{K})^{\prime }E[((\widetilde{B}%
_{w}^{\prime }\widetilde{B}_{w}/n)^{-}-I_{K})(\frac{1}{n}\sum_{i=1}^{n}E[%
\epsilon _{i}^{2}\widetilde{b}_{w}^{K}(X_{i})\widetilde{b}%
_{w}^{K}(X_{i})^{\prime }|X_{i}])((\widetilde{B}_{w}^{\prime }\widetilde{B}%
_{w}/n)^{-}-I_{K})\{{\mathcal{C}}_{n,\eta }\}]\partial \widetilde{b}_{w}^{K}%
}{V_{K}}  \notag \\
&\lesssim &\frac{(\partial \widetilde{b}_{w}^{K})^{\prime }E[((\widetilde{B}%
_{w}^{\prime }\widetilde{B}_{w}/n)^{-}-I_{K})(\widetilde{B}_{w}^{\prime }%
\widetilde{B}_{w}/n)((\widetilde{B}_{w}^{\prime }\widetilde{B}%
_{w}/n)^{-}-I_{K})\{{\mathcal{C}}_{n,\eta }\}]\partial \widetilde{b}_{w}^{K}%
}{V_{K}}  \notag \\
&\lesssim &C_{\eta }^{2}\zeta _{K,n}^{2}\lambda _{K,n}^{2}(\log K)/n\quad
=\quad o(1)
\end{eqnarray}%
for all $n$ sufficiently large, where the second last line is by Assumption %
\ref{resid reg}(ii) and the final line is because both $\Vert (\widetilde{B}%
_{w}^{\prime }\widetilde{B}_{w}/n)^{-}-I_{K}\Vert \lesssim \Vert (\widetilde{%
B}_{w}^{\prime }\widetilde{B}_{w}/n)-I_{K}\Vert $ and $\Vert (\widetilde{B}%
_{w}^{\prime }\widetilde{B}_{w})/n\Vert \lesssim 1$ hold on $\mathcal{C}%
_{n,\eta }$ for all $n$ sufficiently large under Assumption \ref{a-gram}. As
$\liminf \mathbb{P}(\mathcal{C}_{n,\eta })\geq 1-\eta $ and $\eta $ is
arbitrary, the second term in (\ref{e-expansion}) is therefore $o_{p}(1)$.

Now consider the leading term in (\ref{e-expansion}).
The summands are i.i.d. with mean zero and unit variance. The Lindeberg
condition is easily verified:
\begin{eqnarray}
E[\epsilon_i^2 u_K^*(X_i)^2 \{ |\epsilon_i u_K^*(X_i)| > \eta \sqrt n\}] & =
& E[\epsilon_i^2 u_K^*(X_i)^2 \{ |\epsilon_i | > \eta (\sqrt n/\zeta_{K,n}
\lambda_{K,n})\}] \\
& \leq & \sup_x E[\epsilon_i^2 \{ |\epsilon_i | > \eta (\sqrt n/\zeta_{K,n}
\lambda_{K,n})\}|X_i = x] \; = \;o(1)
\end{eqnarray}
by Assumption \ref{resid reg}(v) because $\zeta_{K,n}^2 \lambda_{K,n}^2/n =
o(1)$. Thus the leading term is asymptotically $N(0,1)$ by the
Lindeberg-Feller theorem.

With \textbf{weakly dependent data} we apply the Cauchy-Schwarz inequality
to the second term in expression (\ref{e-expansion}) to obtain
\begin{eqnarray}
&&\left\vert \frac{\frac{\partial f(h_{0})}{\partial h}[\widetilde{b}%
_{w}^{K}]^{\prime }((\widetilde{B}_{w}^{\prime }\widetilde{B}%
_{w}/n)^{-}-I_{K})(\widetilde{B}_{w}^{\prime }e/\sqrt{n})}{V_{K}^{1/2}}%
\right\vert  \label{e-csbd} \\
&\leq &\frac{\left\Vert \frac{\partial f(h_{0})}{\partial h}[\widetilde{b}%
_{w}^{K}]\right\Vert \Vert (\widetilde{B}_{w}^{\prime }\widetilde{B}%
_{w}/n)^{-}-I_{K}\Vert \Vert \widetilde{B}_{w}^{\prime }e/n\Vert \times
\sqrt{n}}{V_{K}^{1/2}} \\
&\lesssim &\Vert (\widetilde{B}_{w}^{\prime }\widetilde{B}_{w}/n)-I_{K}\Vert
\Vert \widetilde{B}_{w}^{\prime }e/n\Vert \times \sqrt{n}
\end{eqnarray}%
wpa1, because $\left\Vert \frac{\partial f(h_{0})}{\partial h}[\widetilde{b}%
_{w}^{K}]\right\Vert \asymp V_{K}^{1/2}$ and $\Vert (\widetilde{B}%
_{w}^{\prime }\widetilde{B}_{w}/n)^{-}-I_{K}\Vert \leq 2\Vert (\widetilde{B}%
_{w}^{\prime }\widetilde{B}_{w}/n)-I_{K}\Vert $ wpa1 by Assumption \ref%
{a-gram}. Assumption \ref{resid reg}(i)(ii) implies $\Vert \widetilde{B}%
_{w}^{\prime }e/n\Vert =O_{p}(\sqrt{K/n})$, whence the second term in
expression (\ref{e-expansion}) is $o_{p}(1)$ by the condition $\Vert (%
\widetilde{B}_{w}^{\prime }\widetilde{B}_{w}/n)-I_{K}\Vert =o_{p}(K^{-1/2})$.

To show the leading term in (\ref{e-expansion}) is asymptotically $N(0,1)$
we use a martingale CLT (Corollary 2.8 of \cite{McLeish1974}). This
verifying the conditions (a) $\max_{i \leq n} |u_K^*(X_i)\epsilon_i/\sqrt n|
\to_p 0$ and (b) $\frac{1}{n} \sum_{i=1}^n u_K^*(X_i)^2 \epsilon_i^2 \to_p 1$%
. To verify condition (a), let $\eta > 0$ be arbitrary. Then,
\begin{eqnarray}
\mathbb{P}(\max_{i \leq n} |\epsilon_i u_K^*(X_i)/\sqrt n| > \eta) & \leq &
\sum_{i=1}^n \mathbb{P}(|\epsilon_i u_K^*(X_i)/\sqrt n| > \eta) \\
& \leq & \frac{1}{n \eta^2} \sum_{i=1}^n E[\epsilon_i^2
u_K^*(X_i)^2\{|\epsilon_i u_K^*(X_i)/\sqrt n| > \eta\}] \\
& = & \frac{1}{\eta^2} E[\epsilon_i^2 u_K^*(X_i)^2\{|\epsilon_i
u_K^*(X_i)/\sqrt n| > \eta\}]
\end{eqnarray}
which again is $o(1)$ by Assumption \ref{resid reg}(v) since $\zeta_{K,n}^2
\lambda_{K,n}^2/n = o(1)$. For condition (b), note that $\|{\frac{\partial
f(h_0)}{\partial h}[\widetilde{b}_{w}^{K}]}/{{\|v_K^*\|}_{sd}}\| =
\|v_K^*\|_{L^2(X)}/\|v_K^*\|_{sd} \asymp 1$. Then by the Cauchy-Schwarz
inequality, we have
\begin{equation}
\left| \frac{1}{n} \sum_{i=1}^n u_K^*(X_i)^2\epsilon_i^2 - 1 \right| =
\left| \left( \frac{\frac{\partial f(h_0)}{\partial h}[\widetilde{b}_{w}^{K}]%
}{{\|v_K^*\|}_{sd}} \right) ^{\prime }(\widehat \Omega - \Omega ) \left(%
\frac{\frac{\partial f(h_0)}{\partial h}[\widetilde{b}_{w}^{K}]}{{\|v_K^*\|}%
_{sd}} \right) \right| \lesssim \|\widehat \Omega - \Omega\|
\end{equation}
which is $o_p(1)$ by Assumption \ref{a-eucgce}. Therefore, the leading term
in (\ref{e-expansion}) is asymptotically $N(0,1)$.

It remains to show that
\begin{equation}
\frac{\sqrt n(f(\widetilde h) - f(h_0))}{V_K^{1/2}} = o_p(1)\,.
\end{equation}
Assumption \ref{a-functional}(ii) and the fact that $\widetilde h \in
N_{K,n} $ wpa1 together yield
\begin{eqnarray}
\frac{\sqrt n(f(\widetilde h) - f(h_0))}{V_K^{1/2}} & = & \sqrt{\frac{n}{V_K}%
}\frac{\partial f(h_0)}{\partial h}[\widetilde h - h_0] + o_p(1)\,.
\end{eqnarray}
which is $o_p(1)$ by Assumption \ref{a-functional}(iii). %
\end{proof}

\begin{proof}[Proof of Corollary \protect\ref{inference-2}]
The result follows from Theorem \ref{t-dist-new}. Assumption \ref{sieve reg
gen}(iii) is satisfied for these bases under Assumptions \ref{X reg} and \ref%
{b sieve}. Moreover, Assumption \ref{a-gram} is satisfied under the
restrictions on $K$ (see Lemmas \ref{Bconvi.i.d.} and \ref{Bconvbeta}).
Assumption \ref{a-functional} is satisfied provided $\sqrt{n}\Vert
\widetilde{h}-h_{0}\Vert _{\infty }=o(V_{K}^{-1/2})$. But $\Vert \widetilde{h%
}-h_{0}\Vert _{\infty }=O_{p}(K^{-p/d})$ by the proof of Theorem \ref{sup
norm rate regression}, so $\sqrt{n}V_{K}^{-1/2}K^{-p/d}=o(1)$ is sufficient
for Assumption \ref{a-functional} to hold. Moreover, under Assumption \ref%
{resid reg}(iii), Lemma \ref{lem-omcgce} shows that Assumption \ref{a-eucgce}
and the condition $\Vert \widetilde{B}_{w}^{\prime }\widetilde{B}%
_{w}/n-I_{K}\Vert =o_{p}(K^{-1/2})$ are satisfied for weakly dependent data
under the respective conditions on $K$ (see Lemma \ref{Bconvbeta}). %
\end{proof}

\begin{proof}[Proof of Corollary \protect\ref{inference-nl}]
The result follows by Theorem \ref{t-dist-new} with Assumption \ref%
{a-functional}(i')--(iv') in place of Assumption \ref{a-functional} (see
Remark \ref{rmk-snb}). Most conditions of Theorem \ref{t-dist-new} can be
verified in the same way as those for Corollary \ref{inference-2}.
Assumption \ref{a-functional}(iv') is satisfied under the conditions on $K$
because $\Vert \widetilde{h}-h_{0}\Vert _{\infty }=O_{p}(K^{-p/d})$ by the
proof of Theorem \ref{sup norm rate regression}, and $\Vert \widehat{h}-%
\widetilde{h}\Vert _{\infty }=O_{p}(\sqrt{(K\log n)/n})$ by Lemma \ref%
{reglem gen}.
\end{proof}

\begin{proof}[Proof of Lemma \protect\ref{lem-varconsistent}]
Result (2) follows from Theorem \ref{t-dist-new} and Result (1) of Lemma \ref%
{lem-varconsistent} by the continuous mapping theorem. It remains to show
Result (1). By addition and subtraction of terms,
\begin{eqnarray}
\frac{\widehat{\Vert v_{K}^{\ast }\Vert }_{sd}^{2}}{{\Vert v_{K}^{\ast
}\Vert }_{sd}^{2}} &=&\frac{1}{n}\sum_{i=1}^{n}\frac{\epsilon
_{i}^{2}v_{K}^{\ast }(X_{i})^{2}}{{\Vert v_{K}^{\ast }\Vert }_{sd}^{2}}+%
\frac{1}{n}\sum_{i=1}^{n}\frac{\epsilon _{i}^{2}(\widehat{v}_{K}^{\ast
}(X_{i})^{2}-v_{K}^{\ast }(X_{i})^{2})}{{\Vert v_{K}^{\ast }\Vert }_{sd}^{2}}
\\
&&+\frac{1}{n}\sum_{i=1}^{n}\frac{(\widehat{h}%
(X_{i})-h_{0}(X_{i}))^{2}v_{K}^{\ast }(X_{i})^{2}}{{\Vert v_{K}^{\ast }\Vert
}_{sd}^{2}}+\frac{1}{n}\sum_{i=1}^{n}\frac{(\widehat{h}%
(X_{i})-h_{0}(X_{i}))^{2}(\widehat{v}_{K}^{\ast }(X_{i})^{2}-v_{K}^{\ast
}(X_{i})^{2})}{{\Vert v_{K}^{\ast }\Vert }_{sd}^{2}}  \notag \\
&&-\frac{2}{n}\sum_{i=1}^{n}\frac{\epsilon _{i}(\widehat{h}%
(X_{i})-h_{0}(X_{i}))v_{K}^{\ast }(X_{i})^{2}}{{\Vert v_{K}^{\ast }\Vert }%
_{sd}^{2}}-\frac{2}{n}\sum_{i=1}^{n}\frac{\epsilon _{i}(\widehat{h}%
(X_{i})-h_{0}(X_{i}))(\widehat{v}_{K}^{\ast }(X_{i})^{2}-v_{K}^{\ast
}(X_{i})^{2})}{{\Vert v_{K}^{\ast }\Vert }_{sd}^{2}}  \notag \\
=: &&T_{1}+T_{2}+T_{3}+T_{4}+T_{5}+T_{6}\,.  \notag
\end{eqnarray}

Control of $T_1$: $T_1 \to_p 1$ by Assumption \ref{a-eucgce}.

Control of $T_{2}$: Let
\begin{eqnarray}
\partial &=&\frac{\frac{\partial f(h_{0})}{\partial h}[\widetilde{b}_{w}^{K}]%
}{{\Vert v_{K}^{\ast }\Vert }_{sd}} \\
\widehat{\partial } &=&\frac{\frac{\partial f(\widehat{h})}{\partial h}[%
\widetilde{b}_{w}^{K}]}{{\Vert v_{K}^{\ast }\Vert }_{sd}} \\
\widehat{\widehat{\partial \,}} &=&(\widetilde{B}_{w}^{\prime }\widetilde{B}%
_{w}/n)^{-1}\widehat{\partial }\,.
\end{eqnarray}%
Then with this notation,
\begin{equation}
|T_{2}|=\left\vert (\widehat{\widehat{\partial \,}})^{\prime }\widehat{%
\Omega }\widehat{\widehat{\partial \,}}-\partial ^{\prime }\widehat{\Omega }%
\partial \right\vert =|(\widehat{\widehat{\partial \,}}+\partial )^{\prime }%
\widehat{\Omega }(\widehat{\widehat{\partial \,}}-\partial )|\leq \Vert (%
\widehat{\widehat{\partial \,}}+\partial )^{\prime }\Vert \Vert \widehat{%
\Omega }\Vert \Vert (\widehat{\widehat{\partial \,}}-\partial )\Vert \,.
\end{equation}%
Note that $\Vert \widehat{\Omega }\Vert =O_{p}(1)$ by Assumption \ref%
{a-eucgce}(ii). By the triangle inequality and definition of $\partial $, $%
\widehat{\partial }$, and $\widehat{\widehat{\partial \,}}$,
\begin{equation}
\Vert \widehat{\widehat{\partial \,}}-\widehat{\partial }\Vert \leq \Vert (%
\widetilde{B}_{w}^{\prime }\widetilde{B}_{w}/n)^{-}-I_{K}\Vert (\Vert
\widehat{\partial }-\partial \Vert +\Vert \partial \Vert )\,.
\end{equation}%
Assumption \ref{a-gram} implies $\Vert (\widetilde{B}_{w}^{\prime }%
\widetilde{B}_{w}/n)^{-}-I_{K}\Vert =o_{p}(1)$; $\Vert \widehat{\partial }%
-\partial \Vert =o_{p}(1)$ by Assumption \ref{a-functional}(iv), because $%
\widehat{h}\in N_{K,n}$ wpa1; and $\Vert \partial \Vert \asymp 1$ because $%
\Vert v_{K}^{\ast }\Vert _{L^{2}(X)}\asymp \Vert v_{K}^{\ast }\Vert _{sd}$
under Assumption \ref{resid reg}(ii)(iv). Therefore, $\Vert \widehat{%
\widehat{\partial \,}}-\widehat{\partial }\Vert =o_{p}(1)$, $\Vert \widehat{%
\widehat{\partial \,}}+\widehat{\partial }\Vert =O_{p}(1)$, and so $%
|T_{2}|=o_{p}(1)$.

Control of $T_3$: First note that
\begin{equation}
|T_3| \leq \|\widehat h - h_0\|_{\infty,w}^2 \times \frac{1}{n} \sum_{i=1}^n
\frac{v_K(X_i)^2}{\|v_K^*\|^2_{sd}} = o_p(1) \times O_p(1) = o_p(1)
\end{equation}
where $\|\widehat h - h_0\|_{\infty,w} = o_p(1)$ by hypothesis and $n^{-1}
\sum_{i=1}^n v_K(X_i)^2/\|v_K^*\|^2_{sd}$ by Markov's inequality and the
fact that $\|v_K^*\|_{L^2(X)} \asymp \|v_K^*\|_{sd}$ under Assumption \ref%
{resid reg}(ii)(iv).

Control of $T_4$: by the triangle inequality definition of $\widehat v_K^*$
and $v_K^*$:
\begin{eqnarray}
|T_4| & \leq & \|\widehat h - h_0\|^2_{\infty,w} \times \left( \frac{1}{n}
\sum_{i=1}^n \frac{\widehat v_K^*(X_i)^2}{\|v_K^*\|^2_{sd}} + \frac{1}{n}
\sum_{i=1}^n \frac{v_K^*(X_i)^2}{\|v_K^*\|^2_{sd}} \right) \\
& = & o_p(1) \times \left( \widehat{\widehat{ \partial \,}}^{\prime
}\widehat \Omega \widehat{\widehat{ \partial \,}} + \partial^{\prime
}\widehat \Omega \partial \right) \; \leq \; o_p(1) \times \|\widehat \Omega
\| \times \left( \| \widehat{\widehat{ \partial \,}} \|^2 + \|\partial\|^2
\right)\,.
\end{eqnarray}
Moreover, $\|\widehat \Omega\| = O_p(1)$ by Assumption \ref{a-eucgce}, $%
\|\partial\| \asymp 1$ by Assumption \ref{resid reg}(ii)(iv), and $\|%
\widehat{\widehat{ \partial \,}}\| \leq \|\widehat{\widehat{ \partial \,}} -
{\widehat{ \partial}}\| + \|{\widehat{ \partial }} - \partial\| +
\|\partial\| = O_p(1)$ by Assumption \ref{a-gram} and \ref{a-functional}%
(iv). It follows that $|T_4| = o_p(1)$.

Control of $T_5$: By the inequality $2|a| \leq 1 + a^2$, we have
\begin{equation}
|T_5| \leq \|\widehat h - h_0\|_{\infty,w} \frac{1}{n} \sum_{i=1}^n \frac{%
(1+\epsilon_i^2) v_K^*(X_i)^2}{\|v_K^*\|_{sd}^2} = o_p(1) \times O_p(1) =
o_p(1)
\end{equation}
where $\|\widehat h - h_0\|_{\infty,w} = o_p(1)$ by hypothesis, $n^{-1}
\sum_{i=1}^n \epsilon_i^2 v_K^*(X_i)^2/\|v_K^*\|^2_{sd} \to_p 1$ by
Assumption \ref{a-eucgce}, and the remaining term is $O_p(1)$ by the
arguments for $T_3$.

Control of $T_6$: The proof is essentially the same as that for $T_2$,
except we replace $\widehat \Omega$ by the matrix $\widehat \mho = n^{-1}
\sum_{i=1}^n \epsilon_i (\widehat h(X_i) - h_0(X_i)) \widetilde b^K_w(X_i)
\widetilde b^K_w(X_i)^{\prime }$. By the inequality $2|a| \leq 1 + a^2$, it
follows that
\begin{eqnarray}
\|\widehat \mho\| & \leq & \|\widehat h - h_0\|_{\infty,w} \times \left\|
n^{-1}\sum_{i=1}^n (1+\epsilon_i^2) \widetilde b^K_w(X_i) \widetilde
b^K_w(X_i)^{\prime }\right\| \\
& = & \|\widehat h - h_0\|_{\infty,w} \times \left\| \widetilde B_w^{\prime
}\widetilde B_w/n + \widehat \Omega \right\| \; = \; o_p(1) \times O_p(1) \;
=\; o_p(1)
\end{eqnarray}
because $\|\widehat h - h_0\|_{\infty,w} = o_p(1)$, $\|\widetilde
B_w^{\prime }\widetilde B_w/n \| = O_p(1)$ by Assumption \ref{a-gram}, and $%
\|\widehat \Omega\| = O_p(1)$ by Assumption \ref{a-eucgce}.
\end{proof}

\begin{proof}[Proof of Theorem \protect\ref{c-inference}]
This follows from Lemma \ref{lem-varconsistent}.

First, Assumption \ref{a-gram} is satisfied for i.i.d. and $\beta $-mixing
data under the respective conditions on $K$ (see Lemmas \ref{Bconvi.i.d.}
and \ref{Bconvbeta}). Moreover, Lemma \ref{lem-omcgce} shows that Assumption %
\ref{a-eucgce} and the condition $\Vert \widetilde{B}_{w}^{\prime }%
\widetilde{B}_{w}/n-I_{K}\Vert =o_{p}(K^{-1/2})$ is satisfied for weakly
dependent data under the respective conditions on $K$ (see Lemma \ref%
{Bconvbeta}). Therefore Theorem \ref{t-dist-new} may be applied for
asymptotic normality of $f(\widehat{h})$.

To apply Lemma \ref{lem-varconsistent} it remains to show that $\Vert
\widehat{h}-h_{0}\Vert _{\infty ,w}=o_{p}(1)$. But $\Vert \widetilde{h}%
-h_{0}\Vert _{\infty }=o_{p}(1)$ by assumption, and $\Vert \widehat{h}-%
\widetilde{h}\Vert _{\infty }=O_{p}(\zeta _{K,n}\lambda _{K,n}\sqrt{(\log
n)/n})=o_{p}(1)$ by Lemmas \ref{reglem gen}, \ref{Bconvi.i.d.} and \ref%
{Bconvbeta} under the conditions on $K$.
\end{proof}

\subsection{Proofs for Section \protect\ref{ei sec}}

\begin{proof}[Proof of Corollary \protect\ref{troppcor}]
Follows from Theorem \ref{troppthm} with $t=C\sigma _n\sqrt{\log (d_1+d_2)}$
for sufficiently large $C $, and applying the condition $R_n\sqrt{\log
(d_1+d_2)}=o(\sigma _n)$.
\end{proof}

\begin{proof}[Proof of Theorem \protect\ref{beta tropp}]
By Berbee's lemma (enlarging the probability space as necessary) the process
$\{X_i\}$ can be coupled with a process $X_i^*$ such that $%
Y_{k}:=\{X_{(k-1)q+1},\ldots ,X_{kq}\}$ and $Y_k^*:=\{X_{(k-1)q+1}^*,\ldots
,X_{kq}^*\}$ are identically distributed for each $k\geq 1$, ${\mathbb{P}}%
(Y_k\neq Y_k^*)\leq \beta (q)$ for each $k\geq 1 $ and $\{Y_1^*,Y_3^*,\ldots
\}$ are independent and $\{Y_2^*,Y_4^*,\ldots \}$ are independent (see Lemma
2.1 of \cite{Berbee1987}). Let $I_e$ and $I_o $ denote the indices of $%
\{1,\ldots ,n\}$ corresponding to the odd- and even-numbered blocks, and $%
I_r $ the indices in the remainder, so $I_r=q[n/q]+1,\ldots ,n$ when $%
q[n/q]<n$ and $I_r=\emptyset $ when $q[n/q]=n$.

Let $\Xi _{i,n}^*=\Xi (X_{i,n}^*)$. By the triangle inequality,
\begin{equation}
\begin{array}{rcl}
&  & {\mathbb{P}}\left (\|\sum _{i=1}^n\Xi _{i,n}\|\geq 6t\right ) \\
& \leq & {\mathbb{P}}(\|\sum _{i=1}^{[n/q]q}\Xi _{i,n}^*\|+\|\sum _{i\in
I_r}\Xi _{i,n}\|+\|\sum _{i=1}^{[n/q]q}(\Xi _{i,n}^*-\Xi _{i,n})\|\geq 6t)
\\
& \leq & \frac{n}{q}\beta (q)+{\mathbb{P}}\left (\|\sum _{i\in I_r}\Xi
_{i,n}\|\geq t\right )+{\mathbb{P}}\left (\|\sum _{i\in I_e}\Xi
_{i,n}^*\|\geq t\right )+{\mathbb{P}}\left (\|\sum _{i\in I_o}\Xi
_{i,n}^*\|\geq t\right )%
\end{array}%
\end{equation}
To control the last two terms we apply Theorem \ref{troppthm}, recognizing
that $\sum _{i\in I_e}\Xi _{i,n}^*$ and $\sum _{i\in I_o}\Xi _{i,n}^*$ are
each the sum of fewer than $[n/q]$ independent $d_1\times d_2$ matrices,
namely $W_k^*=\sum _{i=(k-1)q+1}^{kq}\Xi _{i,n}^*$. Moreover each $W_k^*$
satisfies $\|W_k^*\|\leq qR_n$ and $\max \{\|E[W_k^*W_k^{*\prime
}]\|,\|E[W_k^{*\prime }W_k^*]\|\}\leq q^2s_n$. Theorem \ref{troppthm} then
yields
\begin{equation}
{\mathbb{P}}\left (\left \|\sum _{i\in I_e}\Xi _{i,n}^*\right \|\geq t\right
)\leq (d_1+d_2) \mathrm{exp}\left (\frac{-t^2/2}{nqs_n^2+qR_nt/3}\right )
\end{equation}
and similarly for $I_o$.
\end{proof}

\begin{proof}[Proof of Corollary \protect\ref{beta rate}]
Follows from Theorem \ref{beta tropp} with $t=Cs_n\sqrt{nq\log (d_1+d_2)}$
for sufficiently large $C$, and the conditions $\frac{n}{q}\beta (q)=o(1)$
and $R_n\sqrt{q\log (d_1+d_2)}=o(s_n\sqrt{n})$.
\end{proof}

\begin{proof}[Proof of Lemma \protect\ref{eilem}]
Let $G=E[b^{K}_w(X_i)b^{K}_w(X_i)^{\prime }]$. Since $B_{K,w}=clsp%
\{b_{K1}w_n,\ldots ,b_{KK}w_n\}$, we have:
\begin{eqnarray}
&&\sup \{\textstyle |\frac{1}{n}\sum _{i=1}^{n}b(X_{i})^{2}-1|:b\in
B_{K,w},E[b(X)^{2}]=1\}  \notag \\
&=&\sup \{|c^{\prime }(B^{\prime }_wB_w/n-G)c|:c\in {\mathbb{R}}^{K},\Vert
G^{1/2}c\Vert =1\} \\
&=&\sup \{|c^{\prime }G^{1/2}(G^{-1/2}(B^{\prime
}_wB_w/n)G^{-1/2}-I_{K})G^{1/2}c|:c\in {\mathbb{R}}^{K},\Vert G^{1/2}c\Vert
=1\} \\
&=&\sup \{|c^{\prime }(\widetilde {B}^{\prime }_w\widetilde {B}%
_w/n-I_{K})c|:c\in {\mathbb{R}}^{K},\Vert c\Vert =1\} \\
&=&\Vert \widetilde {B}^{\prime }_w\widetilde {B}_w/n-I_{K}\Vert _{2}^{2}
\end{eqnarray}
as required.
\end{proof}

\subsection{Proofs for Section \protect\ref{sec-stable}}

We first present a general result that allows us to bound the $L^{\infty }$
operator norm of the $L^{2}(X)$ projection $P_{K}$ onto a linear sieve space
$B_{K}\equiv clsp\{b_{K1},\ldots ,b_{KK}\}$ by the $\ell ^{\infty }$ norm of
the inverse of its corresponding Gram matrix.

\begin{lemma}
\label{lem-p-matrix} If there exists a sequence of positive constants $%
\{c_{K}\}$ such that (i) $\sup_{x\in \mathcal{X}}\Vert b^{K}(x)\Vert _{\ell
^{1}}\lesssim c_{K}$ and (ii) $\max_{1\leq k\leq K}\Vert b_{Kk}\Vert
_{L^{1}(X)}\lesssim c_{K}^{-1}$, then
\begin{equation*}
\Vert P_{K}\Vert _{\infty }\lesssim \Vert \left(
E[b^{K}(X_{i})b^{K}(X_{i})^{\prime }]\right) ^{-1}\Vert _{\ell ^{\infty }}\,.
\end{equation*}
\end{lemma}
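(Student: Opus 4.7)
The plan is to bound $\|P_K h\|_\infty/\|h\|_\infty$ uniformly in $h$ by chaining together three elementary inequalities. Starting from the explicit formula
\begin{equation*}
P_K h(x) = b^K(x)' G^{-1} E[b^K(X_i) h(X_i)], \qquad G := E[b^K(X_i) b^K(X_i)'],
\end{equation*}
I would first apply the Hölder duality pairing $|u'v| \leq \|u\|_{\ell^1} \|v\|_{\ell^\infty}$ with $u = b^K(x)$ and $v = G^{-1} E[b^K h]$, giving
\begin{equation*}
|P_K h(x)| \leq \|b^K(x)\|_{\ell^1} \cdot \|G^{-1} E[b^K h]\|_{\ell^\infty}.
\end{equation*}

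Next, I would use the fact that $\|\cdot\|_{\ell^\infty}$ applied to a matrix is by definition the operator norm induced by the $\ell^\infty$ vector norm (i.e.\ the maximum absolute row sum), so
\begin{equation*}
\|G^{-1} E[b^K h]\|_{\ell^\infty} \leq \|G^{-1}\|_{\ell^\infty} \cdot \|E[b^K h]\|_{\ell^\infty}.
\end{equation*}
The final and purely coordinate-wise step bounds $\|E[b^K h]\|_{\ell^\infty}$ by pulling the sup norm of $h$ out of the expectation:
\begin{equation*}
\|E[b^K(X_i) h(X_i)]\|_{\ell^\infty} = \max_{1 \leq k \leq K} |E[b_{Kk}(X_i) h(X_i)]| \leq \|h\|_\infty \cdot \max_{1 \leq k \leq K} \|b_{Kk}\|_{L^1(X)}.
\end{equation*}

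Assembling these three inequalities and invoking hypotheses (i) and (ii), I obtain
\begin{equation*}
\sup_x |P_K h(x)| \lesssim c_K \cdot \|G^{-1}\|_{\ell^\infty} \cdot c_K^{-1} \cdot \|h\|_\infty = \|G^{-1}\|_{\ell^\infty} \|h\|_\infty,
\end{equation*}
so the constants $c_K$ cancel and dividing by $\|h\|_\infty$ and taking the supremum over $h \in L^\infty(X)$ with $\|h\|_\infty \neq 0$ yields the claim. There is no genuine obstacle here—the proof is essentially the observation that $c_K$ and $c_K^{-1}$ are the ``right'' normalizing constants so that the $\ell^1$--$\ell^\infty$ duality with the basis vector on one side and the $L^\infty$--$L^1$ duality against $h$ on the other exactly cancel, leaving only the $\ell^\infty$ operator norm of $G^{-1}$. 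The conditions (i) and (ii) should be thought of as a compatibility requirement ensuring such a $c_K$ exists; verifying them for tensor-product wavelets (where one expects $c_K \asymp K^{1/2}$) is where the real work of applying this lemma in Theorems \ref{t-wavd} and \ref{c-pstable} lies.
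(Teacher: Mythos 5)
Your proposal is correct and follows essentially the same argument as the paper's own proof: the $\ell^1$--$\ell^\infty$ H\"older pairing against $b^K(x)$, the induced $\ell^\infty$ operator-norm bound for $G^{-1}$, and pulling $\Vert h\Vert_\infty$ out of $E[b^K(X_i)h(X_i)]$, with conditions (i) and (ii) cancelling the $c_K$ factors exactly as you describe.
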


\begin{proof}[Proof of Lemma \protect\ref{lem-p-matrix}]
By H\"{o}lder's inequality (with (i)), definition of the operator norm, and H%
\"{o}lder's inequality again (with (ii)), we obtain:
\begin{eqnarray*}
|P_{K}f(x)| &=&|b^{K}(x)^{\prime }\left( E[b^{K}(X_{i})b^{K}(X_{i})^{\prime
}]\right) ^{-1}E[b^{K}(X_{i})f(X_{i})]| \\
&\leq &\Vert b^{K}(x)\Vert _{\ell ^{1}}\Vert \left(
E[b^{K}(X_{i})b^{K}(X_{i})^{\prime }]\right)
^{-1}E[b^{K}(X_{i})f(X_{i})]|\Vert _{\ell ^{\infty }} \\
&\lesssim &c_{K}\Vert \left( E[b^{K}(X_{i})b^{K}(X_{i})^{\prime }]\right)
^{-1}E[b^{K}(X_{i})f(X_{i})]|\Vert _{\ell ^{\infty }} \\
&\leq &c_{K}\Vert \left( E[b^{K}(X_{i})b^{K}(X_{i})^{\prime }]\right)
^{-1}\Vert _{\ell ^{\infty }}\Vert E[b^{K}(X_{i})f(X_{i})]|\Vert _{\ell
^{\infty }} \\
&=&c_{K}\Vert \left( E[b^{K}(X_{i})b^{K}(X_{i})^{\prime }]\right) ^{-1}\Vert
_{\ell ^{\infty }}\max_{1\leq k\leq K}E[|b_{Kk}(X_{i})f(X_{i})|] \\
&\leq &c_{K}\Vert \left( E[b^{K}(X_{i})b^{K}(X_{i})^{\prime }]\right)
^{-1}\Vert _{\ell ^{\infty }}\max_{1\leq k\leq K}E[|b_{Kk}(X_{i})|]\Vert
f\Vert _{\infty } \\
&\lesssim &\Vert \left( E[b^{K}(X_{i})b^{K}(X_{i})^{\prime }]\right)
^{-1}\Vert _{\infty }\Vert f\Vert _{\infty }
\end{eqnarray*}%
uniformly in $x$. The result now follows by taking the supremum over $x\in
\mathcal{X}$.
\end{proof}

We will bound $\Vert \left( E[b^{K}(X_{i})b^{K}(X_{i})^{\prime }]\right)
^{-1}\Vert _{\ell ^{\infty }}$ for (tensor product) wavelet bases using the
following Lemma.

\begin{lemma}
\label{t-dms} Let $A\in \mathbb{R}^{K\times K}$ be a positive definite
symmetric matrix such that $A_{i,j}=0$ whenever $|i-j|>m/2$ for $m$ even.
Then: $\Vert A^{-1}\Vert _{\ell ^{\infty }}\leq \frac{2C}{1-\lambda }$ where
\begin{eqnarray*}
\kappa &=&\lambda _{\max }(A)/\lambda _{\min }(A) \\
\lambda &=&\left( \frac{\sqrt{\kappa }-1}{\sqrt{\kappa }+1}\right)
^{2/m}\;<\;1 \\
C &=&\Vert A^{-1}\Vert \max \{1,(1+\sqrt{\kappa })^{2}/(2\kappa )\}\,.
\end{eqnarray*}
\end{lemma}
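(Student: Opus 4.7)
The plan is to apply the classical theorem of Demko, Moss, and Smith (1984) on exponential decay of entries of inverses of banded positive definite matrices, and then convert the pointwise decay into the desired $\ell^{\infty}$ operator-norm bound by summing a geometric series. The overall strategy is: (i) use Chebyshev polynomial approximation of $1/x$ on the spectral interval of $A$ to produce a polynomial $P_k$ with $\|A^{-1}-P_k(A)\|$ small; (ii) exploit that $P_k(A)$ is band-limited with bandwidth $km/2$ to get entrywise decay of $A^{-1}$; (iii) sum.

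For step (i), let $a=\lambda_{\min}(A)$ and $b=\lambda_{\max}(A)$, and let $P_k$ be the best uniform polynomial approximation to $x\mapsto 1/x$ of degree at most $k$ on $[a,b]$. A standard Chebyshev approximation estimate gives
\begin{equation*}
\max_{x\in[a,b]}\bigl|\tfrac{1}{x}-P_k(x)\bigr|\;\leq\;\frac{1}{a}\cdot\frac{(1+\sqrt{\kappa})^{2}}{2\kappa}\,q^{k},\qquad q=\frac{\sqrt{\kappa}-1}{\sqrt{\kappa}+1}.
\end{equation*}
Because $A$ is symmetric positive definite with spectrum in $[a,b]$, the spectral theorem implies $\|A^{-1}-P_k(A)\|\leq \max_{x\in[a,b]}|1/x-P_k(x)|$, and using $\|A^{-1}\|=1/a$ this rearranges to $\|A^{-1}-P_k(A)\|\leq C\,q^{k}$ with $C=\|A^{-1}\|\max\{1,(1+\sqrt{\kappa})^{2}/(2\kappa)\}$, matching the constant in the statement.

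For step (ii), the banded structure $A_{ij}=0$ for $|i-j|>m/2$ gives $(A^{\ell})_{ij}=0$ whenever $|i-j|>\ell m/2$, hence $P_k(A)_{ij}=0$ for $|i-j|>km/2$. For any off-diagonal pair $(i,j)$, choose $k=\lceil 2|i-j|/m\rceil$; then $P_k(A)_{ij}=0$, so
\begin{equation*}
|(A^{-1})_{ij}|=|(A^{-1}-P_k(A))_{ij}|\leq \|A^{-1}-P_k(A)\|\leq C\,q^{k}\leq C\,q^{2|i-j|/m}=C\,\lambda^{|i-j|},
\end{equation*}
while for the diagonal, $|(A^{-1})_{ii}|\leq \|A^{-1}\|\leq C$ by the $\max\{1,\cdot\}$ in the definition of $C$. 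Step (iii) then bounds the induced $\ell^{\infty}$ norm by the maximum absolute row sum:
\begin{equation*}
\|A^{-1}\|_{\ell^{\infty}}=\max_i\sum_j|(A^{-1})_{ij}|\leq C\max_i\sum_j\lambda^{|i-j|}\leq C\sum_{k\in\mathbb{Z}}\lambda^{|k|}=C\cdot\frac{1+\lambda}{1-\lambda}\leq\frac{2C}{1-\lambda},
\end{equation*}
which is the conclusion.

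The main technical obstacle is step (i): pinning down the explicit Chebyshev approximation constant so that $\|A^{-1}-P_k(A)\|\leq Cq^k$ holds with \emph{exactly} the $C$ claimed in the lemma. This requires working with the explicit formula for the best uniform approximation to $1/x$ on $[a,b]$ (a rescaling of the Chebyshev expansion of $1/(1+\alpha\cos\theta)$) and tracking the leading constant carefully. The remaining steps are essentially bookkeeping: the bandedness of $P_k(A)$ is immediate from the product structure, and collapsing the entrywise decay into an $\ell^{\infty}$ bound is a single geometric series.
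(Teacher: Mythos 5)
Your overall strategy is exactly the one underlying the paper's proof: the paper writes $\Vert A^{-1}\Vert _{\ell ^{\infty }}$ as the maximum absolute row sum, cites Theorem 2.4 of Demko, Moss and Smith for the entrywise decay $|(A^{-1})_{i,j}|\leq C\lambda ^{|i-j|}$, and sums the geometric series. You are instead reproving the cited theorem via Chebyshev approximation of $1/x$, which is precisely how Demko--Moss--Smith obtain it, so there is no genuine difference of route; your steps (i) and (iii) are fine.

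There is, however, an off-by-one error in step (ii) that breaks the argument as written. With $k=\lceil 2|i-j|/m\rceil $ you have $km/2\geq |i-j|$, so the band structure of $P_{k}(A)$ (which only guarantees $P_{k}(A)_{ij}=0$ when $|i-j|>km/2$) does \emph{not} give $P_{k}(A)_{ij}=0$; for instance, when $|i-j|=m/2$ your choice is $k=1$ and $P_{1}(A)_{ij}$ contains a multiple of $A_{ij}$, which may be nonzero. You need the largest $k$ with $km/2<|i-j|$, namely $k=\lceil 2|i-j|/m\rceil -1$. With your stated error estimate $Cq^{k}$ this corrected choice costs a factor $q^{-1}$ and you no longer land on the exact constant of the lemma. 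The repair is to use the sharp form of the Chebyshev estimate, $\max_{x\in \lbrack a,b]}|1/x-P_{k}(x)|\leq \frac{(1+\sqrt{\kappa })^{2}}{2b}\,q^{k+1}$ (exponent $k+1$, not $k$); then $k+1=\lceil 2|i-j|/m\rceil \geq 2|i-j|/m$ gives $q^{k+1}\leq \lambda ^{|i-j|}$ and hence $|(A^{-1})_{ij}|\leq C\lambda ^{|i-j|}$ exactly as claimed. With that fix your write-up is a complete, self-contained proof of the lemma rather than a citation of it.
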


\begin{proof}[Proof of Lemma \protect\ref{t-dms}]
By definition of the matrix infinity norm, we have
\begin{equation*}
\Vert A^{-1}\Vert _{\infty }=\max_{j\leq K}\sum_{k=1}^{K}|(A^{-1})_{j,k}|\,.
\end{equation*}%
The result now follows by Theorem 2.4 of \cite{DemkoMossSmith} (which states
that $\left\vert (A^{-1})_{i,j}\right\vert \leq C\lambda ^{|i-j|}$ for all $%
i,j$) and geometric summation.
\end{proof}

\begin{proof}[Proof of Theorem \protect\ref{t-wavd}]
We first prove the univariate case (i.e. $d = 1$) before generalizing to the
multivariate case.

By the definition of wavelet basis, we may assume without loss of generality
that $b_{K1}=\varphi _{J,0}$, \ldots , $b_{KK}=\varphi _{J,2^{J}-1}$ with $%
K=2^{J}$.

For any $x\in \lbrack 0,1]$ the vector $b^{K}(x)$ has, at most, $2N$
elements that are nonzero (as a consequence of the compact support of the $%
\varphi _{J,k})$. It follows that
\begin{equation}
\Vert b^{K}(x)\Vert _{\ell ^{1}}\leq (2N)2^{J/2}\max \{\Vert \varphi \Vert
_{\infty },\Vert \varphi _{0}^{l}\Vert _{\infty },\ldots ,\Vert \varphi
_{N-1}^{l}\Vert _{\infty },\Vert \varphi _{-1}^{r}\Vert _{\infty },\ldots
,\Vert \varphi _{-N}^{r}\Vert _{\infty }\}\lesssim 2^{J/2}
\end{equation}%
uniformly in $x$. Therefore $\sup_{x\in \lbrack 0,1]}\Vert b^{K}(x)\Vert
_{\ell ^{1}}\lesssim \sqrt{K}$. Let $k$ be such that $N\leq k\leq 2^{J}-N-1$%
. By boundedness of $f_{X}$ and a change of variables, we have (with $\mu $
denoting Lebesgue measure)
\begin{eqnarray}
E[|\varphi _{J,k}(X_{i})|] &\leq &\sup_{x\in \lbrack 0,1]}f_{X}(x)\int_{%
\mathbb{R}}2^{J/2}|\varphi (2^{J}x-k)|\,\mathrm{d}\mu (x) \\
&=&\sup_{x\in \lbrack 0,1]}f_{X}(x)2^{-J/2}\int_{\mathbb{R}}|\varphi (y)|\,%
\mathrm{d}\mu (y) \\
&=&\sup_{x\in \lbrack 0,1]}f_{X}(x)2^{-J/2}\Vert \varphi \Vert _{L^{1}(\mu )}
\end{eqnarray}%
where $\Vert \varphi \Vert _{L^{1}(\mu )}<\infty $ because $\varphi $ has
compact support and is continuous. Similar arguments can be used to show the
same for the $N$ left and right scaling functions. It follows that $%
\max_{k}\Vert b_{Kk}\Vert _{L^{1}(X)}\lesssim 2^{-J/2}=K^{-1/2}$. Therefore,
the $b_{K1},\ldots ,b_{KK}$ satisfy the conditions of Lemma \ref%
{lem-p-matrix} and hence $\Vert P_{K}\Vert _{\infty }\lesssim \Vert \left(
E[b^{K}(X_{i})b^{K}(X_{i})^{\prime }]\right) ^{-1}\Vert _{\ell ^{\infty }}$.

It remains to prove that $\Vert \left( E[b^{K}(X_{i})b^{K}(X_{i})^{\prime
}]\right) ^{-1}\Vert _{\ell ^{\infty }}\lesssim 1$. We first verify the
conditions of Lemma \ref{t-dms}. Disjoint support of the $\varphi _{J,k}$
implies that $(E[b^{K}(X_{i})b^{K}(X_{i})^{\prime }])_{k,j}=0$ whenever $%
|k-j|>2N-1$. For positive definiteness, we note that
\begin{equation}
\lambda _{\max }(E[b^{K}(X_{i})b^{K}(X_{i})^{\prime }])\leq \left(
\sup_{x\in \lbrack 0,1]}f_{X}(x)\right) \lambda _{\max }\left(
\int_{[0,1]}b^{K}(x)b^{K}(x)^{\prime }\,\mathrm{d}\mu (x)\right) =\left(
\sup_{x\in \lbrack 0,1]}f_{X}(x)\right)
\end{equation}%
(where we understand the integral performed element wise) because $\varphi
_{J,0},\ldots ,\varphi _{J,2^{J}-1}$ are an orthonormal basis for $V_{J}$
with respect to the $L^{2}([0,1])$ inner product. Similarly, $\lambda _{\min
}(E[b^{K}(X)b^{K}(X)^{\prime }])\geq \inf_{x\in \lbrack 0,1]}f_{X}(x)$.
Therefore
\begin{equation*}
\kappa \leq (\sup_{x\in \lbrack 0,1]}f_{X}(x))/(\inf_{x\in \lbrack
0,1]}f_{X}(x))<\infty
\end{equation*}%
uniformly in $K$, and
\begin{equation*}
\Vert \left( E[b^{K}(X_{i})b^{K}(X_{i})^{\prime }]\right) ^{-1}\Vert \leq
1/(\inf_{x\in \lbrack 0,1]}f_{X}(x))<\infty
\end{equation*}%
uniformly in $K$. This verifies the conditions of Lemma \ref{t-dms} for $%
A=E[b^{K}(X_{i})b^{K}(X_{i})^{\prime }]$. It follows by Lemma \ref{t-dms}
that $\Vert \left( E[b^{K}(X_{i})b^{K}(X_{i})^{\prime }]\right) ^{-1}\Vert
_{\ell ^{\infty }}\lesssim 1$, as required.

We now adapt the preceding arguments to the multivariate case. For any $x =
(x_1,\ldots,x_d) \in [0,1]^d$ we define $b^K(x) = \otimes_{l=1}^d
b^{K_0}(x_l)$ where $b^{K_0}(x_l) = (\varphi_{J,0}(x_l), \ldots,
\varphi_{J,2^J-1}(x_l))^{\prime }$ and $K_0 = 2^J$.

Recall that $K=2^{Jd}$. For any $x=(x_{1},\ldots ,x_{d})\in \lbrack 0,1]^{d}$
we have
\begin{eqnarray}
\Vert b^{K}(x)\Vert _{\ell ^{1}} &=&\prod_{l=1}^{d}\Vert
b^{K_{0}}(x_{l})\Vert _{\ell ^{1}} \\
&\leq &\left( (2N)2^{J/2}\max \{\Vert \varphi \Vert _{\infty },\Vert \varphi
_{0}^{l}\Vert _{\infty },\ldots ,\Vert \varphi _{N-1}^{l}\Vert _{\infty
},\Vert \varphi _{-1}^{r}\Vert _{\infty },\ldots ,\Vert \varphi
_{-N}^{r}\Vert _{\infty }\}\right) ^{d} \\
&=&\lesssim (2^{J/2})^{d}\;=\;\sqrt{K}\,.
\end{eqnarray}%
With slight abuse of notation we let $X_{i1},\ldots ,X_{id}$ denote the $d$
elements of $X_{i}$. For $0\leq k_{1},\ldots ,k_{d}\leq 2^{J}-1$, Fubini's
theorem and a change of variables yields
\begin{eqnarray}
E\left[ \left\vert \prod_{l=1}^{d}\varphi _{J,k}(X_{il})\right\vert \right]
&\leq &\sup_{x\in \lbrack 0,1]^{d}}f_{X}(x)\int_{\mathbb{R}^{d}}\left(
\prod_{l=1}^{d}|\varphi _{J,k_{l}}(x_{l})|\right) d\mu (x_{1},\ldots ,x_{d})
\\
&=&\sup_{x\in \lbrack 0,1]^{d}}f_{X}(x)\prod_{l=1}^{d}\left( \int_{\mathbb{R}%
}|\varphi _{J,k_{l}}(x_{l})|\,\mathrm{d}\mu (x_{l})\right) \\
&\lesssim &(2^{-J/2})^{d}\;=\;K^{-1/2}\,.
\end{eqnarray}%
This verifies the conditions of Lemma \ref{lem-p-matrix} and hence $\Vert
P_{K}\Vert _{\infty }\lesssim \Vert \left(
E[b^{K}(X_{i})b^{K}(X_{i})^{\prime }]\right) ^{-1}\Vert _{\ell ^{\infty }}$.

The tensor product basis is an orthonormal basis with respect to Lebesgue
measure on $[0,1]^{d}$ (by Fubini's theorem). Therefore, the minimum and
maximum eigenvalues of $E[b^{K}(X_{i})b^{K}(X_{i})^{\prime }]$ may be shown
to be bounded below and above by $\inf_{x\in \lbrack 0,1]^{d}}f_{X}(x)$ and $%
\sup_{x\in \lbrack 0,1]^{d}}f_{X}(x)$ as in the univariate case. Again,
compact support of the $\varphi _{J,k}$ and the tensor product construction
implies that $E[b^{K}(X_{i})b^{K}(X_{i})^{\prime }]$ is banded: $%
(E[b^{K}(X_{i})b^{K}(X_{i})^{\prime }])_{k,j}=0$ whenever $|k-j|>(2N-1)^{d}$%
. This verifies the conditions of Lemma \ref{t-dms} for $%
E[b^{K}(X_{i})b^{K}(X_{i})^{\prime }]$. It follows by Lemma \ref{t-dms} that
$\Vert \left( E[b^{K}(X_{i})b^{K}(X_{i})^{\prime }]\right) ^{-1}\Vert _{\ell
^{\infty }}\lesssim 1$, as required.
\end{proof}

\begin{theorem}
\label{t-wavd-emp} Under conditions stated in Theorem \ref{t-wavd}, we have $%
\Vert P_{K,n}\Vert _{\infty }\lesssim 1$ wpa1 provided the following are
satisfied:

\begin{enumerate}
\item[(i)] $\Vert \left( {B^{\prime }B}/{n}\right)
-E[b^{K}(X_{i})b^{K}(X_{i})^{\prime }]\Vert =o_{p}(1)$, and

\item[(ii)] $\max_{1\leq k\leq K}\left\vert \frac{\frac{1}{n}%
\sum_{i=1}^{n}|b_{Kk}(X_{i})|-E[|b_{Kk}(X_{i})|]}{E[|b_{Kk}(X_{i})|]}%
\right\vert =o_{p}(1).$
\end{enumerate}
\end{theorem}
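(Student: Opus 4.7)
The plan is to mirror the proof of Theorem \ref{t-wavd}, replacing the population Gram matrix and population $L^1$ norms by their empirical counterparts, and to use conditions (i) and (ii) to transfer the key bounds to the empirical setting.

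First, I would establish an empirical analog of Lemma \ref{lem-p-matrix}: if (a) $\sup_{x}\|b^K(x)\|_{\ell^1}\lesssim c_K$ and (b) $\max_{1\le k\le K}\frac{1}{n}\sum_{i=1}^n |b_{Kk}(X_i)| \lesssim c_K^{-1}$ wpa1, then $\|P_{K,n}\|_\infty \lesssim \|(B'B/n)^{-}\|_{\ell^\infty}$ wpa1. The derivation is identical to Lemma \ref{lem-p-matrix}: apply Hölder's inequality with $\ell^1/\ell^\infty$ duality in $x$, then Hölder again in the empirical inner product $\frac{1}{n}\sum_i b_{Kk}(X_i)h(X_i)$ to replace $E[|b_{Kk}|]$ by its empirical counterpart. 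The uniform $\ell^1$ bound $\sup_x\|b^K(x)\|_{\ell^1}\lesssim \sqrt{K}$ (i.e.\ $c_K = \sqrt{K}$) was already shown in the proof of Theorem \ref{t-wavd}. The empirical bound (b) with this same $c_K$ follows from condition (ii) of the theorem together with the population bound $E[|b_{Kk}(X_i)|]\lesssim K^{-1/2}$ also established in the proof of Theorem \ref{t-wavd}.

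Next I would show that $\|(B'B/n)^{-}\|_{\ell^\infty}\lesssim 1$ wpa1 by applying Lemma \ref{t-dms} to $A = B'B/n$. Two structural facts must be checked. First, $B'B/n$ inherits the same banded sparsity pattern as $E[b^K(X_i)b^K(X_i)']$: since $(B'B/n)_{k,j} = \frac{1}{n}\sum_i b_{Kk}(X_i)b_{Kj}(X_i)$ vanishes identically whenever the supports of $b_{Kk}$ and $b_{Kj}$ are disjoint (a deterministic property of the wavelet basis, not of the data), the bandwidth $m$ from the proof of Theorem \ref{t-wavd} is the same for $B'B/n$. Second, the eigenvalues of $B'B/n$ are bounded away from zero and infinity wpa1: in the proof of Theorem \ref{t-wavd} we showed $\lambda_{\min}(E[b^K b^{K\prime}])\ge \inf_x f_X(x)>0$ and $\lambda_{\max}(E[b^K b^{K\prime}])\le \sup_x f_X(x)<\infty$, so by Weyl's inequality and condition (i),
\begin{equation*}
\lambda_{\min}(B'B/n)\ge \inf_x f_X(x) - o_p(1),\qquad \lambda_{\max}(B'B/n)\le \sup_x f_X(x) + o_p(1).
\end{equation*}
Therefore $B'B/n$ is invertible wpa1 with condition number $\kappa$ bounded above by a deterministic constant, uniformly in $K$ and $n$. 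Lemma \ref{t-dms} then yields $\|(B'B/n)^{-1}\|_{\ell^\infty}\lesssim 1$ wpa1, and combining with the empirical analog of Lemma \ref{lem-p-matrix} gives $\|P_{K,n}\|_\infty\lesssim 1$ wpa1.

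I do not expect any serious obstacle: conditions (i) and (ii) are designed precisely to ensure that the two ingredients of the proof of Theorem \ref{t-wavd} — the spectral control of the Gram matrix (used to invoke the Demko–Moss–Smith decay bound through Lemma \ref{t-dms}) and the $L^1$-size control of individual basis functions (used in Lemma \ref{lem-p-matrix}) — remain valid in the empirical setting. The mildest subtle point is simply noting that the sparsity pattern is preserved exactly (not merely approximately) by passing to empirical averages, which is immediate from the deterministic disjoint-support structure of compactly supported wavelets.
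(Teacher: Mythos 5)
Your proposal is correct and follows essentially the same route as the paper's own proof: condition (ii) plus the population bound $E[|b_{Kk}(X_i)|]\lesssim K^{-1/2}$ gives the empirical analog of Lemma \ref{lem-p-matrix}, and condition (i) transfers the eigenvalue bounds so that Lemma \ref{t-dms} applies to $B'B/n$, whose banded structure is deterministic. Your explicit remark that the sparsity pattern of the empirical Gram matrix is preserved exactly (not merely approximately) is a point the paper leaves implicit.
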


\begin{proof}[Proof of Theorem \protect\ref{t-wavd-emp}]
Condition (ii) $\max_{1\leq k\leq K}\frac{\frac{1}{n}%
\sum_{i=1}^{n}|b_{Kk}(X_{i})|-E[|b_{Kk}(X_{i})|]}{E[|b_{Kk}(X_{i})|]}%
=o_{p}(1)$ implies
\begin{equation}
\max_{1\leq k\leq K}\frac{1}{n}\sum_{i=1}^{n}|b_{Kk}(X_{i})|\lesssim
\max_{1\leq k\leq K}\Vert b_{Kk}\Vert _{L^{1}(X)}\lesssim K^{-1/2}
\end{equation}%
where the final inequality is by the proof of Theorem \ref{t-wavd}.
Moreover, $\sup_{x}\Vert b^{K}(x)\Vert _{\ell ^{1}}\lesssim \sqrt{K}$ by the
proof of Theorem \ref{t-wavd}. It follows analogously to Lemma \ref%
{lem-p-matrix} that $\Vert P_{K,n}\Vert _{\infty }\lesssim \Vert \left(
B^{\prime }B/n\right) ^{-1}\Vert _{\infty }$ wpa1 (noting that $B^{\prime
}B/n$ is invertible wpa1 because $\Vert \left( B^{\prime }B/n\right)
-E[b^{K}(X_{i})b^{K}(X_{i})^{\prime }]\Vert =o_{p}(1)$ and $\lambda
_{K,n}\lesssim 1$).

Condition (i) $\Vert \left( B^{\prime }B/n\right)
-E[b^{K}(X_{i})b^{K}(X_{i})^{\prime }]\Vert =o_{p}(1)$ implies (1) $\lambda
_{\min }(B^{\prime }B/n)\gtrsim \lambda _{\min
}(E[b^{K}(X_{i})b^{K}(X_{i})^{\prime }])$, (2) $\lambda _{\max }(B^{\prime
}B/n)\lesssim \lambda _{\max }(E[b^{K}(X_{i})b^{K}(X_{i})^{\prime }])$, and
(3) $\Vert \left( B^{\prime }B/n\right) ^{-1}\Vert \lesssim \Vert \left(
E[b^{K}(X_{i})b^{K}(X_{i})^{\prime }]\right) ^{-1}\Vert $ all hold wpa1.
Moreover, $\lambda _{\min }(E[b^{K}(X_{i})b^{K}(X_{i})^{\prime }])\gtrsim 1$
and $\lambda _{\max }(E[b^{K}(X_{i})b^{K}(X_{i})^{\prime }])\lesssim 1$ by
the proof of Theorem \ref{t-wavd}. It follows by Lemma \ref{t-dms} that $%
\Vert \left( B^{\prime }B/n\right) ^{-1}\Vert _{\ell ^{\infty }}\lesssim 1$
wpa1, as required.
\end{proof}

\begin{proof}[Proof of Theorem \protect\ref{c-pstable}]
Condition (i) of Theorem \ref{t-wavd-emp} is satisfied because $\lambda
_{K,n}\lesssim 1$ and the condition $\Vert (\widetilde{B}^{\prime }%
\widetilde{B}/n)-I_{K}\Vert =o_{p}(1)$ under the conditions on $K$ (see
Lemma \ref{Bconvi.i.d.} for the i.i.d. case and Lemma \ref{Bconvbeta} for
the weakly dependent case). Therefore,
\begin{eqnarray}
\Vert ({B^{\prime }B}/{n})-E[b^{K}(X_{i})b^{K}(X_{i})^{\prime }]\Vert &\leq
&[\lambda _{\min }(E[b^{K}(X_{i})b^{K}(X_{i})^{\prime }])]^{-1}\Vert (%
\widetilde{B}^{\prime }\widetilde{B}/n)-I_{K}\Vert \\
&\lesssim &\Vert (\widetilde{B}^{\prime }\widetilde{B}/n)-I_{K}\Vert
\;=\;o_{p}(1)\,.
\end{eqnarray}

It remains to verify condition (ii) of Theorem \ref{t-wavd-emp}. Let $%
b_{K1}=\varphi _{J,0}^{d},\ldots ,b_{KK}=\varphi _{J,2^{J}-1}^{d}$ with $%
K=2^{dJ}$ as in the proof of Theorem \ref{t-wavd}. Similar arguments to the
proof of Theorem \ref{t-wavd} yield the bounds $\Vert b_{Kk}\Vert _{\infty
}\lesssim 2^{dJ/2}=\sqrt{K}$ uniformly for $1\leq k\leq K$. Let $f_{X}(x)$
denote the density of $X$. Then by $\inf_{x\in \lbrack 0,1]^{d}}|f_{X}(x)|>0$
and Fubini's theorem
\begin{eqnarray}
E[|b_{Kk}(X)|] &\geq &\left( \inf_{x\in \lbrack 0,1]^{d}}f_{X}(x)\right)
\int_{[0,1]^{d}}\left( \prod_{l=1^{d}}|\varphi _{J,k_{l}}(x_{l})|\right) \,%
\mathrm{d}\mu (x_{1},\ldots ,x_{d}) \\
&=&\left( \inf_{x\in \lbrack 0,1]^{d}}f_{X}(x)\right) \prod_{l=1}^{d}\left(
\int_{[0,1]}|\varphi _{J,k_{l}}(x_{l})|\,\mathrm{d}\mu (x_{l})\right) \,.
\end{eqnarray}%
A change of variables argument yields $\int_{[0,1]}|\varphi
_{J,k_{l}}(x_{l})|\,\mathrm{d}\mu (x_{l})\gtrsim 2^{-J/2}$ uniformly for $%
0\leq k_{l}\leq 2^{J}-1$, and so $E[|b_{Kk}(X)|]\gtrsim 2^{-dJ/2}=K^{-1/2}$
uniformly for $1\leq k\leq K$.

For the \textbf{i.i.d. case}, define $b_{Kk}^{\ast
}(X_{i})=n^{-1}(|b_{Kk}(X_{i})|-E[|b_{Kk}(X_{i})|])/(E[|b_{Kk}(X_{i})|])$
for each $1\leq k\leq K$. It may be deduced from the preceding bounds and
the fact that $E[b_{Kk}(X_{i})^{2}]\asymp 1$ that $\Vert b_{Kk}^{\ast }\Vert
_{\infty }\lesssim K/n$ and $E[b_{Kk}^{\ast }(X_{i})^{2}]\lesssim K/n^{2}$.
By the union bound and Bernstein's inequality (see, e.g., pp. 192--193 of
\cite{Pollard1984}) we obtain, for any $t>0$,
\begin{eqnarray}
&&\mathbb{P}\left( \max_{1\leq k\leq K}\left\vert \frac{\frac{1}{n}%
\sum_{i=1}^{n}|b_{Kk}(X_{i})|-E[|b_{Kk}(X_{i})|]}{E[|b_{Kk}(X_{i})|]}%
\right\vert >t\right)  \notag \\
&\leq &\sum_{k=1}^{K}\mathbb{P}\left( \left\vert \frac{\frac{1}{n}%
\sum_{i=1}^{n}|b_{Kk}(X_{i})|-E[|b_{Kk}(X_{i})|]}{E[|b_{Kk}(X_{i})|]}%
\right\vert >t\right) \\
&\leq &2\exp \left\{ \log K-\frac{t^{2}/2}{c_{1}K/n+c_{2}K/nt}\right\}
\label{e-l1bern}
\end{eqnarray}%
where $c_{1}$ and $c_{2}$ are finite positive constants independent of $t$.
The right-hand side of (\ref{e-l1bern}) vanishes as $n\rightarrow \infty $
since $K\log n/n=o(1)$.

For the \textbf{beta-mixing regressors} \textbf{case}, we may extend the
proof for the i.i.d. case using a coupling argument similar to the proof of
Theorem \ref{beta tropp} to deduce that
\begin{eqnarray}
&&\mathbb{P}\left( \max_{1\leq k\leq K}\left\vert \frac{\frac{1}{n}%
\sum_{i=1}^{n}|b_{Kk}(X_{i})|-E[|b_{Kk}(X_{i})|]}{E[|b_{Kk}(X_{i})|]}%
\right\vert >t\right)  \notag \\
&\lesssim &\frac{n}{q}\beta (q)+\exp \left\{ \log n-\frac{t^{2}}{%
c_{1}Kq/n+c_{2}Kq/nt}\right\} \,.
\end{eqnarray}%
The right-hand side is $o(1)$ provided $\frac{n}{q}\beta (q)=o(1)$ and $%
(qK\log n)/n=o(1)$. Both these conditions are satisfied under the conditions
on $K$, taking $q=\gamma ^{-1}\log n$ in the exponentially $\beta $-mixing
case and $q\asymp n^{\gamma /(1+\gamma )}$ in the algebraically $\beta $%
-mixing case.
\end{proof}

{\
\bibliographystyle{chicago}
\bibliography{regrate}
}

\end{document}